\documentclass[12pt]{amsart}
\usepackage{amscd, amssymb,latexsym,amsmath, amscd, amsmath}
\usepackage{amsthm}
\usepackage{amstext}
\usepackage{anysize}
\usepackage[sc]{mathpazo} 
\usepackage{enumitem}
\usepackage{pst-node}
\marginsize{2.5cm}{2.5cm}{2.5cm}{2.5cm}
\linespread{1.05}
\usepackage{graphics}
\usepackage{color}
\usepackage{tikz}
\usepackage{tikz-cd}
 \newtheorem{theorem}{Theorem}[section]
 
 \newtheorem{corollary}[theorem]{Corollary}
 \newtheorem{lemma}[theorem]{Lemma}
 \newtheorem{proposition}[theorem]{Proposition}

\newtheorem{remark}[theorem]{Remark}

\newtheorem{definition}[theorem]{Definition}

%------New Definitions-----

\newcommand{\GL}{{\rm GL}}
\newcommand{\GSpin}{{\rm GSpin}}
\newcommand{\SL}{{\rm SL}}
\newcommand{\Sp}{{\rm Sp}}
\newcommand{\SO}{{\rm SO}}
\newcommand{\C}{\mathbb{C}}
\newcommand{\Z}{\mathbb{Z}} 
\newcommand{\Q}{\mathbb{Q}} 
\newcommand{\Nm}{\mathbb{N}} 
\newcommand{\Irrep}{{\rm Irrep}}
\newcommand{\Ext}{{\rm Ext}}
\newcommand{\Hom}{{\rm Hom}}

\newcommand{\ind}{{\rm ind}}
\newcommand{\Ind}{{\rm Ind}}

\newcommand{\Gal}{{\rm Gal}}
%-----Article begins here------
\title{Restriction of representations of metaplectic $\GL_{2}(F)$ to tori}
\author{Shiv Prakash Patel}
\thanks{The first author was partially  supported by the Center for Advanced Studies in Mathematics, and by the Kreitman School of Advanced Graduate Studies at Ben-Gurion University of the Negev in Israel.}
\address{Department of Mathematics\\ Ben-Gurion University of the Negev\\ P.O.B. 653\\Be'er Sheva 8410501\\ Israel.}

\email{shivprakashpatel@gmail.com}

\author{Dipendra Prasad}

\address{School of Mathematics\\ Tata Institute of Fundamental Research \\ Homi Bhabha Road, Colaba \\Mumbai 400005\\ India.}

\email{dprasad@math.tifr.res.in}

%\subjclass[2010]{Primary; Secondary}
\keywords{Branching laws, metaplectic groups, covering groups, restriction problems, Gross-Prasad conjectures, Heisenberg groups,
Representation theory of $p$-adic groups}

\date{\today}

\begin{document}

\begin{abstract}
Let $F$ be a non-Archimedean local field. 
We study the restriction of an irreducible admissible genuine representations of the two fold metaplectic cover $\widetilde{\GL}_{2}(F)$ of $\GL_{2}(F)$ 
to the inverse image in $\widetilde{\GL}_{2}(F)$ 
of a maximal torus in $\GL_{2}(F)$. 
\end{abstract}
\maketitle
\tableofcontents

\section{Introduction}
Let $F$ be a non-Archimedean local field. 
A well-known theorem due to J. Tunnell \cite{T83} for $p \neq 2$,  and H. Saito \cite{Sai93} in general, describes the restriction of an irreducible admissible representation of $\GL_{2}(F)$ to a maximal torus $E^{\times} \subset \GL_{2}(F)$, where $E$ is any maximal commutative semisimple subalgebra of $M_{2}(F)$.
One of the first conclusions about this restriction is that for any irreducible admissible representation $\pi$ of $\GL_{2}(F)$ and a character $\chi : E^{\times} \rightarrow \C^{\times}$ such that $\chi|_{F^{\times}}$ is the central character of $\pi$, then
\[
\dim \Hom_{E^{\times}} (\pi, \chi) \leq 1.
\]

If $\pi$ is a principal series representation of $\GL_{2}(F)$, or  $E = F \oplus F$ and $\pi$ not one dimensional,  
then we have
\[
\dim \Hom_{E^{\times}} (\pi, \chi) =1.
\]

This result on restriction of representations of $\GL_{2}(F)$ to maximal tori may be considered as the first case of branching laws from $\SO_{n+1}(F)$ to $\SO_{n}(F)$ which were formulated as conjectures by B. Gross and D. Prasad \cite{GP92}, and which were recently proved by Waldspurger and Moeglin-Waldspurger \cite{MW12}.

The aim of the present work is to initiate a similar study on restriction of representations of $\widetilde{\GL}_{2}(F)$, the metaplectic $\GL_{2}(F)$, which is a twofold cover of $\GL_{2}(F)$, to the inverse image $\tilde{E}^{\times}$ of $E^{\times}$ in $\widetilde{\GL}_{2}(F)$ where $E$ is any maximal commutative semisimple subalgebra of $M_{2}(F)$.

We will see that one of the crucial first steps, that of multiplicity one, is lost in the metaplectic case, although there is still finiteness, even boundedness of multiplicities by explicit constants.
It is hoped that metaplectic restriction problem will have some interest, and that this paper can serve as a first step.

The main theorem of this paper is the following.

\begin{theorem} \label{main theorem}
Let $E$ be any maximal commutative semisimple subalgebra of $M_{2}(F)$.
Let $\pi$ be an irreducible admissible genuine representation of $\widetilde{\GL}_{2}(F)$ with 
$\omega_{\pi}$ its central character (a character of $\tilde{F}^{\times 2}$).
If $E$ is a quadratic field extension of $F$, and $\pi$ is supercuspidal, assume moreover that $p$, the residue characteristic 
of $F$, is odd. Then
\begin{equation} \label{main relation}
\pi|_{\tilde{E}^{\times}} 
\subseteq \ind_{\tilde{F}^{\times 2}}^{\tilde{E}^{\times}} \omega_{\pi} = \sum_\sigma (\dim \sigma) \sigma
\end{equation}
where $\sigma$ runs 
over all irreducible genuine representations of ${\tilde{E}^{\times}} $ 
whose central character restricted to $\tilde{F}^{\times 2}$ is $\omega_\pi$.
Moreover if $\pi$ is an irreducible principal series representation,
 then we have "equality" in (\ref{main relation}).
\end{theorem}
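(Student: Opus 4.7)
The plan is to prove (\ref{main relation}) in three stages. First, I would set up the Heisenberg-type structure of $\tilde{E}^{\times}$: because $E^{\times}$ is abelian, any two lifts commute up to an element of $\mu_{2}\subset \tilde{F}^{\times 2}$, so $\tilde{E}^{\times}$ is two-step nilpotent and the commutator descends to an alternating $\mu_{2}$-valued pairing on $E^{\times}/F^{\times 2}$. Let $Z=Z(\tilde{E}^{\times})$ denote the full center of $\tilde{E}^{\times}$, i.e.\ the preimage of the radical of this pairing. Stone--von Neumann-type arguments then show that every irreducible genuine representation $\sigma$ of $\tilde{E}^{\times}$ with central character $\omega_{\pi}$ on $\tilde{F}^{\times 2}$ is determined by its central character $\tilde\omega$ on $Z$, has the form $\sigma = \ind_{H}^{\tilde{E}^{\times}}\tilde\chi$ for any maximal abelian subgroup $H$ with $Z\subset H\subset \tilde{E}^{\times}$ and any extension $\tilde\chi$ of $\tilde\omega$ to $H$, and satisfies $\dim\sigma = [\tilde{E}^{\times}:H]$.

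The identity in (\ref{main relation}) is then a direct Frobenius computation: for each such $\sigma$,
\[
\Hom_{\tilde{E}^{\times}}\bigl(\ind_{\tilde{F}^{\times 2}}^{\tilde{E}^{\times}}\omega_{\pi},\sigma\bigr)
=\Hom_{\tilde{F}^{\times 2}}\bigl(\omega_{\pi},\sigma|_{\tilde{F}^{\times 2}}\bigr)
\cong \sigma
\]
as a vector space, so the multiplicity of $\sigma$ in the induced representation equals $\dim\sigma$.

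The inclusion $\pi|_{\tilde{E}^{\times}}\subseteq \ind_{\tilde{F}^{\times 2}}^{\tilde{E}^{\times}}\omega_{\pi}$ is equivalent to $\dim\Hom_{\tilde{E}^{\times}}(\pi,\sigma)\le \dim\sigma$ for every such $\sigma$. Using the realization $\sigma=\ind_{H}^{\tilde{E}^{\times}}\tilde\chi$ together with Frobenius reciprocity, this in turn reduces to the bound $\dim\Hom_{H}(\pi,\tilde\chi)\le [\tilde{E}^{\times}:H]$, i.e.\ controlling the multiplicity of a genuine character of the maximal abelian subgroup $H$ inside $\pi|_{H}$. In the split case $E=F\oplus F$ I would extract this bound from the Kirillov or Whittaker model of $\pi$, in which the action of a maximal abelian subgroup of $\tilde{E}^{\times}$ is explicit. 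In the quadratic field case the same Kirillov/Whittaker argument handles the principal series, while for supercuspidal $\pi$ I would invoke the theta correspondence: assuming $p$ is odd, $\pi$ arises as a theta lift from an orthogonal group attached to $E$, and the see-saw dual-reductive-pair formalism converts the restriction problem for $\pi|_{\tilde{E}^{\times}}$ into a restriction problem on the orthogonal side, from which the multiplicity bound follows. This is precisely where the odd-residue-characteristic hypothesis enters, reflecting the usual subtleties of the Weil representation in residue characteristic two.

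Equality in the principal series case $\pi=\Ind_{\tilde{B}}^{\widetilde{\GL}_{2}(F)}\tilde\chi$ is obtained by an explicit Mackey decomposition of $\pi|_{\tilde{E}^{\times}}$ along the finite double coset space $\tilde{B}\backslash\widetilde{\GL}_{2}(F)/\tilde{E}^{\times}$; summing the contributions from each double coset one checks that every $\sigma$ with central character $\omega_{\pi}$ occurs with multiplicity exactly $\dim\sigma$, matching the upper bound coming from the inclusion. I expect the main obstacle to be the supercuspidal case in the field setting, where the lack of a direct model for $\pi$ on $\tilde{E}^{\times}$ forces one to work through the theta correspondence and verify that Heisenberg multiplicities transfer correctly through the see-saw; the remaining pieces of the proof reduce essentially to Mackey theory and explicit manipulation of Kirillov/Whittaker models.
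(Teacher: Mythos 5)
Your outline of the soft parts matches the paper: the Heisenberg structure of $\tilde{E}^{\times}$, the Frobenius-reciprocity computation showing $\ind_{\tilde{F}^{\times 2}}^{\tilde{E}^{\times}}\omega_{\pi}=\sum_{\sigma}(\dim\sigma)\sigma$, the Kirillov-model argument for the split torus, and the one-orbit Mackey argument for principal series restricted to $\tilde{E}^{\times}$ (for $E$ a field there is in fact a single double coset, since $E^{\times}$ acts transitively on $\mathbb{P}^1(F)$). The genuine gap is in the one step that carries all the difficulty: supercuspidal $\pi$ and $E$ a quadratic field. Your reduction rests on the assertion that such a $\pi$ ``arises as a theta lift from an orthogonal group attached to $E$.'' That is false in general: only the dihedral representations associated with $E$ come from $\mathrm{O}(2)_E$, while a general genuine supercuspidal of $\widetilde{\SL}_2(F)$ is (via Waldspurger) a theta lift from a three-dimensional quadratic space, i.e.\ from $\PGL_2(F)$ or the inner form, which has no relation to the particular $E$ in the restriction problem. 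So the proposed see-saw does not even get started for general $\pi$, and even with the correct see-saw (pairing $E^1\cong\SO(2)_E$ inside a rank-three quadratic space) one would still have to extract the quantitative bound $\dim\Hom_{\tilde{E}^{\times}}(\pi,\sigma)\le\dim\sigma=[E^{\times}:F^{\times}E^{\times 2}]$, which your sketch does not indicate how to do. Your diagnosis of where odd residue characteristic enters is also off: the Weil representation poses no problem at $p=2$; the real issue is the splitting of the metaplectic cover over maximal compact subgroups.

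What the paper actually does is different and more elementary. First, with no hypothesis on $p$, it reduces the problem from $\widetilde{\GL}_2(F)$ to $\widetilde{\SL}_2(F)$: writing $\tilde{\pi}=\ind_{\tilde{Z}\widetilde{\SL}_2(F)}^{\widetilde{\GL}_2(F)}\mu\tilde{\sigma}$ and using $F^{\times}E^{\times 2}=F^{\times}E^1$ together with $\tilde{\pi}|_{\tilde{Z}\widetilde{\SL}_2(F)}\cong\bigoplus_a\mu^a\tilde{\sigma}^a$, one gets $\Hom_{\tilde{E}^{\times}}(\tilde{\pi},\tilde{\chi})\cong\Hom_{\tilde{E}^1}(\tilde{\sigma},\tilde{\nu})$. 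Then, for odd $p$, it uses the Kubota splitting of the cover over maximal compact subgroups (unique in this case) and Manderscheid's theorem that genuine supercuspidals of $\widetilde{\SL}_2(F)$ are compactly induced, to define a bijection $P$ between genuine supercuspidals of $\widetilde{\SL}_2(F)$ and supercuspidals of $\SL_2(F)$ satisfying $\tilde{\sigma}|_K\cong P(\tilde{\sigma})|_K$; since $E^1$ lies in a maximal compact, this yields $\Hom_{\tilde{E}^1}(\tilde{\sigma},\tilde{\nu})\cong\Hom_{E^1}(P(\tilde{\sigma}),\nu)$, and the known multiplicity bound for restrictions from $\SL_2(F)$ to $E^1$ concludes. (The key technical point is that the splittings $s$ and $s^g$ agree on $K\cap K^g$, which uses that the inverse image of the split torus is abelian.) One smaller caveat about your split/reducible analysis: $\pi|_{\tilde{T}}$ is not semisimple, and the closed-orbit piece of the Mackey/Kirillov filtration sits in a possibly nonsplit exact sequence; the paper needs an $\Ext$-computation ($\Hom_{\tilde{T}}=\Ext^1_{\tilde{T}}$ for representations with the same restriction to $\tilde{Z}$) to turn the filtration into the stated multiplicity count, so ``summing the contributions from each double coset'' is not by itself enough there either.
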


\begin{remark} Of course we expect the theorem above to be true for $p =2$ too which we are not 
able to achieve here. 
In the spirit of {\rm dichotomy} of \cite{GP92}  we do not know if there is another representation $\pi'$ of $\widetilde{\GL}_2(F)$ 
such that the restriction to ${\tilde{E}^{\times}} $ 
of $\pi + \pi'$ achieves an ``equality'' up to finite error term in the above theorem,  
which is somehow accounted for by a twofold cover of $D^\times$ 
(containing ${\tilde{E}^{\times}} $!), where $D$ is the unique
quaternion division algebra over $F$.  
\end{remark} 

\begin{remark}\label{dimension} 
It will be seen later  that for an irreducible genuine representation $\sigma$ of $\tilde{E}^\times$, 
$\dim \sigma = |E^\times /F^\times E^{\times 2}|$, 
which for $p$ odd equals 2 by Corollary \ref{dim}, whereas for $p=2$, by remark \ref{p=2}, 
$\dim \sigma = 2\cdot 2^{deg(F/\Q_2)}$.
\end{remark}

In particular, we see that the multiplicity of an irreducible genuine representation $\sigma$ of $\tilde{E}^{\times}$ 
in an irreducible admissible genuine representation of $\widetilde{\GL}_{2}(F)$ is at most $\dim \sigma$.
The theorem turns out to be almost straightforward  to prove in the cases when the representation $\pi$ is either a 
principal series representation or $E = F \oplus F$.
The more difficult part --- something which we accomplish only for odd residue characteristic --- 
is to understand the restriction of an irreducible genuine supercuspidal representation $\pi$ of $\widetilde{\GL}_{2}(F)$ to $\tilde{E}^{\times}$ where $E/F$ is quadratic field extension and $E^{\times} \hookrightarrow \GL_{2}(F)$.
In this case,  when the residue characteristic is odd, we reduce the question on restriction from  $\widetilde{\GL}_{2}(F)$ to $\tilde{E}^{\times}$ to a question on restriction from $\SL_{2}(F)$ to $E^{1}$, where $E^{1}$ is the group of norm 1 elements of $E^{\times}$.
We shall do it in two steps.
\begin{enumerate}
\item Reduce the question on restriction from $\widetilde{\GL}_{2}(F)$ to $\tilde{E}^{\times}$ to a question on restriction from $\widetilde{\SL}_{2}(F)$ to $\tilde{E}^{1}$. 
This can be done for all residue characteristics.
\item When the residue characteristic is odd, using a correspondence (that we define in 
Section \ref{correspondence} using compact induction) between the set of isomorphism classes of 
irreducible genuine supercuspidal representation of $\widetilde{\SL}_{2}(F)$ and that of $\SL_{2}(F)$, we reduce the question of restriction from $\widetilde{\SL}_{2}(F)$ to $\tilde{E}^{1}$ to a question of restriction from $\SL_{2}(F)$ to $E^{1}$.
\end{enumerate}

In the second step, we need to restrict ourselves to the odd residue characteristic case because it is in 
this case when the metaplectic cover $\widetilde{\SL}_{2}(F)$ splits when it is restricted to a maximal compact subgroup of $\SL_{2}(F)$, and this splitting is used in executing the 2nd step.

We give a brief outline of the paper now.
In Section 2, we recall the twofold cover of $\GL_{2}(F)$ under consideration.
In Section 3, we describe the group structure on the inverse image in $\widetilde{\GL}_{2}(F)$ 
of  maximal tori in $\GL_{2}(F)$ in an explicit way.
The inverse images of tori may be called 'Heisenberg groups', which we discuss in some detail, proving some of its important properties and then describe their representations in Section 4.
In Section 5, we prove that the inverse images of tori are Heisenberg groups in the sense as defined in the earliar section, and then we describe their irreducible genuine representations.
In Section 6, the restriction of a genuine principal series representation to a non-split torus is considered.
In Section 7, the restriction of any irreducible admissible genuine representation to the split torus has been considered.
In Section 8, restricting ourselves to the case of odd residue characteristic, we define a correspondence between the irreducible genuine supercuspidal representation of $\widetilde{\SL}_{2}(F)$ and irreducible supercuspidal representation of $\SL_{2}(F)$.
In Section 9, we study the restriction of an irreducible supercuspidal representation of $\widetilde{\GL}_{2}(F)$ to a non-split torus.
We use the correspondence defined in Section 8 and transfer the question of this restriction to another question of restriction of a supercuspidal representation of $\SL_{2}(F)$ to $E^{1}$.

In closing the introduction, we mention that \cite{Pat15} which is the first author's thesis,  similar branching laws were 
considered from $\widetilde{\GL}_2(E)$ to $\GL_2(F)$ for $E$ a quadratic extension of $F$, which may be considered as branching laws from $\SO_4$ to $\SO_3$ in the
context of two-fold nonlinear covers. It may be added that the multiplicity formulae here 
involving $|E^\times /F^\times E^{\times 2}|$ (instead of 1 in \cite{GP92}), 
as contained in remark \ref{dimension}, is also the multiplicity obtained in \cite{Pat15} --- and 
could well be considered to be true more generally for branching laws for twofold metaplectic covers of $\GSpin_n(F)$ to the corresponding cover of 
$\GSpin_{n-1}(F)$. Both 
the work \cite{Pat15}, and this one, has the common feature with \cite {GP92}, that to get this `uniform multiplicity', 
we need to add {\it pure innerforms} of the smaller group in \cite {Pat15}, whereas here we need to add the 
contributions keeping the smaller group the same in this work. 
The methods in the two papers: \cite{Pat15} and this one, are quite different.

\section{Preliminaries}
Let $F$ be a non-Archimedean local field. 
The group $\SL_{2}(F)$ has a unique twofold cover (up to isomorphism), called the metaplectic cover of $\SL_{2}(F)$ denoted by $\widetilde{\SL}_{2}(F)$.
There are many in-equivalent twofold covers of $\GL_{2}(F)$ which extend the above twofold cover $\widetilde{\SL}_{2}(F)$ of $\SL_{2}(F)$.
In what follows, we fix a twofold covering of $\GL_{2}(F)$ as follows. 
Note that $\GL_{2}(F) \cong \SL_{2}(F) \rtimes F^{\times}$ where $F^{\times} \hookrightarrow \GL_{2}(F)$ as $a \mapsto \left( \begin{matrix} a & 0 \\ 0 & 1 \end{matrix} \right)$.
The action of $F^{\times}$ on $\SL_{2}(F)$ lifts to an action on $\widetilde{\SL}_{2}(F)$.
We fix the twofold cover $\widetilde{\GL}_{2}(F)$ of $\GL_{2}(F)$ as 
\[
\widetilde{\GL}_{2}(F) := \widetilde{\SL}_{2}(F) \rtimes F^{\times}
\]
and call it the metaplectic cover of $\GL_{2}(F)$.
We have a short exact sequence
\[
1 \rightarrow \mu_{2} \rightarrow \widetilde{\GL}_{2}(F) \rightarrow \GL_{2}(F) \rightarrow 1
\]
where $\mu_2 = \{ \pm 1 \}$.
This twofold cover $\widetilde{\GL}_{2}(F)$ of $\GL_{2}(F)$ is defined by a 2-cocycle, called Kubota cocycle
\[
\beta : \GL_{2}(F) \times \GL_{2}(F) \rightarrow \mu_{2}.
\]
We identify $\widetilde{\GL}_{2}(F)$ by $\GL_{2}(F) \times \mu_{2}$ as a set on which
 the group multiplication is defined using the cocycle $\beta$.
Let $B$ be the set of upper triangular matrices of $\GL_{2}(F)$. The restriction of $\beta$ to $B$ is given by 
\begin{equation} \label{cocycle}
 \beta \left( \left( \begin{matrix} a & x \\ 0 & b \end{matrix} \right), \left( \begin{matrix} c & y \\ 0 & d \end{matrix}  \right) \right) = (a,d)_{F}
\end{equation}
where $(\cdot, \cdot)_{F}$ denotes the quadratic Hilbert symbol of the field $F$.
In particular, if $A = \left( \begin{matrix} a & 0 \\ 0 & b \end{matrix} \right)$, $B = \left( \begin{matrix} c & 0 \\ 0 & d \end{matrix} \right)$ and $\tilde{A}, \tilde{B}$ are arbitrary lifts of $A, B$ to $\widetilde{\GL}_{2}(F)$, we have
\begin{equation}
[\tilde{A}, \tilde{B}] = (a,d)_{F} (c,b)_{F}.
\end{equation}
For a non-trivial character $\psi : F \rightarrow \C^{\times}$, let $\gamma(\psi)$ denote the 8-th root of unity associated to $\psi$ by A. Weil, called the Weil index.
For $a \in F^{\times}$, let $\psi_{a} : F \rightarrow \C^{\times}$ be the character of $F$ 
given by $\psi_{a}(x):= \psi(ax)$. 
Define
\[
\mu_{\psi}(a) := \dfrac{\gamma(\psi)}{\gamma(\psi_{a})}.
\]
It is known that
\begin{equation}
\mu_{\psi}(a) \mu_{\psi}(b) = (a,b)_{F} \mu_{\psi}(ab).
\end{equation}
Let $T_{0}$ be the diagonal split torus of $\SL_{2}(F)$.
Because of the commutation relation $(3)$,
the inverse image $\tilde{T}_{0}$ of $T_{0}$ is abelian.
For $a \in F^{\times}$, let $\underline{a}$ be the diagonal matrix $\left( \begin{matrix} a & 0 \\ 0 & a^{-1} \end{matrix} \right) \in \SL_{2}(F)$.
Because of $(4)$, the map $\tilde{T}_{0} \rightarrow \C^{\times}$ given by
\[
(\underline{a}, \epsilon) \mapsto \epsilon \mu_{\psi}(a)
\] 
defines a genuine character of $\tilde{T}_{0}$ where $\epsilon \in \mu_{2}$.

For any subset $X$ of $\GL_{2}(F)$, let $\tilde{X}$ be the full inverse image inside $\widetilde{\GL}_{2}(F)$ determined by the projection $\widetilde{\GL}_{2}(F) \rightarrow \GL_{2}(F)$.

Recall that $F^{\times}$ embedded diagonally as scalar matrices in $\GL_{2}(F)$ is the center of $\GL_{2}(F)$ and the covering $\widetilde{\GL}_{2}(F) \longrightarrow \GL_{2}(F)$ restricted to the center of $\GL_{2}(F)$  
is non-trivial. 
In fact, the cocycle is simply given by $\beta \left( \left( \begin{matrix} a & 0 \\ 0 & a \end{matrix} \right), \left( \begin{matrix} b& 0 \\ 0 & b \end{matrix} \right) \right) = (a,b)_{F}$ and hence the cover
\[ 1 \rightarrow \mu_{2} \rightarrow \tilde{F}^{\times} \rightarrow F^{\times} \rightarrow 1 \]
is non-trivial, although $\tilde{F}^{\times}$ is an abelian group.  
Note that $(a, \epsilon) \mapsto \epsilon \mu_{\psi}(a)$, where $\epsilon \in \mu_{2}$, defines a genuine character of $\tilde{F}^{\times}$.

\section{Group structure of inverse images of the tori}
Among the most important information about the covering $\widetilde{\GL}_{2}(F) 
\longrightarrow \GL_{2}(F)$ for us  
is the precise knowledge  about the group 
structure of the inverse image of the tori of $\GL_{2}(F)$ inside $\widetilde{\GL}_{2}(F)$.
First consider the case of split torus.

\begin{lemma} \label{group structure T}
Let $T$ be the diagonal torus of $\GL_{2}(F)$ and $T^{2} = \{ t^{2} : t \in T \}$.
The subgroup $\tilde{T}^2$ is the center of the group $\tilde{T}$. 
Let $Z = F^\times$ denote the center of $G$, then the subgroup $\tilde{Z} \tilde{T}^{2}$ is a maximal abelian subgroup of $\tilde{T}$.
\end{lemma}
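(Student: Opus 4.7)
The plan is to reduce everything to the explicit commutator formula (2), combined with bilinearity and nondegeneracy of the Hilbert symbol. I would begin by computing the center of $\tilde{T}$: if $\tilde{A}$ lifts $A = \mathrm{diag}(a,b)$, then $\tilde{A}$ is central in $\tilde{T}$ iff $(a,d)_F(c,b)_F = 1$ for every $c, d \in F^\times$. Specializing $c=1$ shows $(a,d)_F = 1$ for all $d$, so $a \in F^{\times 2}$; specializing $d=1$ similarly forces $b \in F^{\times 2}$. Conversely, if $a, b$ are squares the commutator is trivially $1$. Hence $Z(\tilde{T}) = \tilde{T}^2$, interpreted as the full preimage of $T^2 = F^{\times 2} \times F^{\times 2}$.

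For the second assertion, I would first verify $\tilde{Z}\tilde{T}^2$ is abelian. Writing $A = \mathrm{diag}(au, av)$ and $B = \mathrm{diag}(bu', bv')$ with $u, u', v, v' \in F^{\times 2}$, formula (2) together with bilinearity and triviality of the Hilbert symbol on squares gives
\[
[\tilde{A}, \tilde{B}] = (au, bv')_F (bu', av)_F = (a, b)_F (b, a)_F = 1
\]
by symmetry. (One should note that $\tilde{Z}\tilde{T}^2$ coincides with the preimage of $ZT^2$ in $\tilde{T}$, and hence is a subgroup, because $\mu_2 \subset \tilde{Z}$ absorbs the cocycle ambiguity when lifting products.) For maximality, suppose $H \supseteq \tilde{Z}\tilde{T}^2$ is abelian and let $\tilde{g} \in H$ lie over $g = \mathrm{diag}(x, y)$. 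Then $\tilde{g}$ must commute with every lift of every scalar matrix $\mathrm{diag}(c, c) \in Z$, which by (2) forces $(x, c)_F (c, y)_F = (c, xy)_F = 1$ for all $c \in F^\times$. Nondegeneracy of the Hilbert symbol then yields $xy \in F^{\times 2}$, equivalently $x/y \in F^{\times 2}$, so $g \in ZT^2$ and $\tilde{g} \in \tilde{Z}\tilde{T}^2$.

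There is no serious obstacle here; the whole argument is carried by formula (2) and the nondegeneracy of the Hilbert symbol. The one mildly delicate point is the bookkeeping around the two possible meanings of $\tilde{T}^2$ and $\tilde{Z}\tilde{T}^2$, resolved by the observation that $\mu_2$ is contained in each factor. It is also worth noting that for maximality it suffices to commute $\tilde{g}$ against $\tilde{Z}$ alone, which is why the scalar subgroup plays such a decisive role: it is precisely the obstruction that prevents $\tilde{T}^2$ itself from being maximal abelian and forces the enlargement to $\tilde{Z}\tilde{T}^2$.
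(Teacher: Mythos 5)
Your proposal is correct and follows essentially the same route as the paper: both determine the center by testing the commutator formula $(2)$ against $\mathrm{diag}(c,1)$ and $\mathrm{diag}(1,d)$, and both prove maximality of $\tilde{Z}\tilde{T}^{2}$ by commuting against scalars and using symmetry plus nondegeneracy of the Hilbert symbol on $F^{\times}/F^{\times 2}$ to conclude $x/y \in F^{\times 2}$. Your observation that it suffices to test against $\tilde{Z}$ alone is exactly what the paper's computation reduces to, since the square factors drop out of the symbol.
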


\begin{proof}
From the commutation relation in $(3)$, it is clear that $\tilde{T}^{2}$ is contained in the center.
Let $x=diag(a,b) \in T$ and $\tilde{x} \in \tilde{T}$ be any lift of $x$. 
If $\tilde{x}$ is in the center of $\tilde{T}$ then we prove that $a, b \in F^{\times 2}$.
Suppose $\tilde{x}$ is in the center of $\tilde{T}$.
In particular, $\tilde{x}$ commutes with $diag(c,1)$ and $diag(1,d)$ for all $c, d \in F^{\times}$. 
By the commutation relation in $(3)$, 
this implies $(c,b)=1$ and $(a,d)=1$ for all $c, d \in F^{\times}$, i.e. $a, b \in F^{\times 2}$.
This proves that the center of $\tilde{T}$ is $\tilde{T}^{2}$.
Since $\tilde{Z}$ is abelian by the same commutation relation, $\tilde{Z} \tilde{T}^{2}$ is an abelian 
subgroup of $\widetilde{T}$.
We need to prove that it is a maximal abelian subgroup of $\widetilde{T}$.

Take $\tilde{x} \in \tilde{T}$ as above, and suppose it commutes with all the elements $\tilde{y} \in \tilde{Z} \tilde{T}^{2}$ where $y = diag(\alpha m^2, \alpha n^2)$ with $\alpha, m, n \in F^{\times}$.
By the commutation relation in $(3)$, we get that $(a, \alpha)= (b, \alpha)$, or in other words $(ab^{-1}, \alpha)=1$ for all $\alpha \in F^{\times}$ and hence $ab^{-1} \in F^{\times 2}$.
Thus $\tilde{x} \in \tilde{Z} \tilde{T}^{2}$.
\end{proof}

Now we consider the case of a non-split torus.
Let $E/F$ be a quadratic extension.
Let $E^{\times} \hookrightarrow \GL_{2}(F)$ be the non-split torus determined by the quadratic extension $E/F$.
We will continue to denote by  $T$ the diagonal torus of $\GL_{2}(F)$. 

Since the covering $1 \rightarrow \mu_{2} \rightarrow \tilde{F}^{\times} \rightarrow F^{\times} \rightarrow 1$ is non-trivial and $F^{\times} \hookrightarrow E^{\times}$, the cover 
\[ 1 \rightarrow \mu_{2} \rightarrow \tilde{E}^{\times} \rightarrow E^{\times} \rightarrow 1 \] 
is also non-trivial.
In fact, $\tilde{E}^{\times}$ 
is a non-abelian group.  The following lemma gives a more precise information on $\tilde{E}^{\times}$.

\begin{lemma} \cite[Proposition~0.1.5]{KP84} \label{KP commutator}
For $a,b \in E^{\times}$, let $\tilde{a}, \tilde{b}$ be any of the inverse images of $a,b$ in $\tilde{E}^{\times}$. 
The commutator $[\tilde{a}, \tilde{b}] \in \mu_2$ depends only on $a,b$, and is given by 
\[
[\tilde{a}, \tilde{b}] = (a,b)_{E} (\Nm a, \Nm b)_{F},
\] 
where $(\cdot, \cdot)_{E}$ and $(\cdot, \cdot)_{F}$ denote the quadratic Hilbert symbol of the field $E$ and $F$ respectively and $\Nm  : E^{\times} \rightarrow F^{\times}$ is the norm map for the field extension $E/F$.
\end{lemma}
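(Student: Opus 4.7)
The map $c(a,b) := [\tilde a, \tilde b]$ is well-defined on $E^\times \times E^\times$ with values in $\mu_2$ because $\mu_2$ is central in $\widetilde{\GL}_2(F)$: any rescaling of $\tilde a$ or $\tilde b$ by an element of $\mu_2$ cancels in the commutator. Since $E^\times$ is abelian, the standard identity $c(aa', b) = c(a, b)\, c(a', b)$ for central extensions of abelian groups (obtained by expanding the commutator and using that elements of $\mu_2$ are central) shows that $c$ is bimultiplicative, and $c(a, b)c(b,a) = 1$ gives skew-symmetry, which in $\mu_2$ forces $c(a,a) = 1$. The proposed right-hand side $(a,b)_E (\Nm a, \Nm b)_F$ has the same symmetries (Hilbert symbols are bimultiplicative and alternating; $\Nm$ is a homomorphism). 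Hence both sides factor through the finite group $(E^\times / E^{\times 2}) \times (E^\times / E^{\times 2})$, and it suffices to verify equality on a set of generators.

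I would then decompose $E^\times$ using $F^\times$, the norm-one group $E^1 = \ker(\Nm)$, and a uniformizer $\pi_E$ of $E$, and check the formula on pairs drawn from each subgroup. For $a, b \in F^\times$ the elements are scalar matrices in $\GL_2(F)$ and Lemma \ref{group structure T} gives $c(a, b) = 1$; the right-hand side also equals $1$ since $(\Nm a, \Nm b)_F = (a^2, b^2)_F = 1$, and $(a, b)_E = 1$ by the classical fact that the restriction $\mathrm{Br}(F)[2] \to \mathrm{Br}(E)[2]$ equals multiplication by $[E:F] = 2$, which is zero on $2$-torsion. For $a, b \in E^1 \subset \SL_2(F)$, the commutator is computed from the restriction of the cocycle $\beta$ to $\SL_2(F)$ (Kubota/Rao), yielding $(a, b)_E$ and matching the right-hand side (whose $\Nm$-factor is trivial). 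The remaining mixed and uniformizer cases are handled similarly, by explicit cocycle evaluations via the Bruhat decomposition of the regular-representation matrices $M(a) \in \GL_2(F)$, and by using that scalars are central in $\GL_2(F)$.

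The main obstacle is the explicit cocycle computation when at least one of $a, b$ is a non-scalar element of $E^\times$: the matrix $M(a)$ lies in the big Bruhat cell of $\GL_2(F)$, so evaluating $\beta(M(a), M(b))$ requires a careful unwinding of the Bruhat decomposition of the product. This is precisely the step in which the Hilbert symbol $(a, b)_E$ (of $E$, rather than $F$) emerges, because the eigenvalues of $M(a)$ naturally live in $E$ and not in $F$. A conceptually cleaner alternative is to realize $\widetilde{\SL}_2(F)$ via the Weil representation on the quadratic space $(E, \Nm)$: elements of $E^\times$ act as similitudes, one reads off the restricted cover from the Weil-representation cocycle, and the two factors $(a, b)_E$ and $(\Nm a, \Nm b)_F$ emerge naturally from the interplay between the norm form on $E$ and the similitude character.
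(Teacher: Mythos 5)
There is a genuine gap. The paper does not prove this lemma at all: it is quoted from Kazhdan--Patterson \cite[Proposition~0.1.5]{KP84}, so the only acceptable ``proofs'' are either a citation or an actual computation of the commutator. Your structural reductions are fine (well-definedness because $\mu_2$ is central, bimultiplicativity of $c(a,b)=[\tilde a,\tilde b]$, hence factoring through $E^\times/E^{\times 2}$ on both sides, and triviality on $F^\times\times F^\times$), but the entire content of the formula lies in evaluating the Kubota cocycle on lifts of \emph{non-scalar} elements of $E^\times\hookrightarrow\GL_2(F)$, and this is exactly the step you flag as ``the main obstacle'' and never carry out. Moreover, the one non-scalar case you do address, $a,b\in E^1$, is handled by asserting that the restriction of $\beta$ to $\SL_2(F)$ ``yields $(a,b)_E$'' --- but that assertion is precisely the special case of the lemma for the nonsplit torus of $\widetilde{\SL}_2(F)$, so it is being assumed rather than proven. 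The alternative route via the Weil representation on $(E,\Nm)$ is likewise only named, not executed. As it stands the proposal establishes the formula only on $F^\times\times F^\times$, where both sides are trivially $1$.

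There is also a smaller but real defect in the reduction to generators: $F^\times$, $E^1$ and a uniformizer $\pi_E$ need not generate $E^\times$ modulo squares. Since $F^\times E^1=F^\times E^{\times 2}$ (as used elsewhere in the paper), in the unramified case with $\pi_E=\pi_F\in F^\times$ these elements generate only the index-two subgroup $F^\times E^{\times 2}$ of $E^\times$ mod squares, so the decisive pairs --- those with $a\notin F^\times E^{\times 2}$, where $(\Nm a,\cdot)_F$ is a nontrivial character --- are never tested. (A minor point: $c(a,a)=1$ follows because the lifts of $a$ commute with one another, not from skew-symmetry, which only gives $c(a,a)^2=1$.) To repair the argument you would need either to perform the big-cell Bruhat/Kubota cocycle evaluation for an honest set of generators of $E^\times/E^{\times 2}$ (including an element outside $F^\times E^{\times 2}$), or to do the Weil-representation computation in detail, or simply to cite \cite[Proposition~0.1.5]{KP84} as the paper does.
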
  

The following well-known relationship among the Hilbert symbols will be  very useful to us. 
We thank Adrian Vasiu for the proof below.
We will use this relationship on several occasions, sometimes without explicitly mentioning it.

\begin{lemma} \label{Hilbert symbol E/F}
Let $E/E$ be a finite extension of $p$-adic fields with $\mu_n \subset F^\times$.
For $a \in E^{\times}$ and $b \in F^{\times}$, we have
\[
(a,b)_{E} = (\Nm a, b)_{F},
\]
relating the $n$-th Hilbert symbols on $F$ and $E$.
\end{lemma}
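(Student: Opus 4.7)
The plan is to derive the identity from local class field theory, specifically from the norm-compatibility of the local Artin reciprocity map for the extension $E/F$. Since $\mu_n \subset F^\times \subset E^\times$, Kummer theory applies uniformly to both fields, and the $n$-th Hilbert symbol admits a clean description via the Artin map.

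First, I would place everything in the Kummer framework. Set $K := F(b^{1/n})$ and $L := EK = E(b^{1/n})$; these are abelian Kummer extensions of $F$ and $E$ respectively, and restriction to $K$ gives a canonical injection $\mathrm{res} : \Gal(L/E) \hookrightarrow \Gal(K/F)$. The $n$-th Hilbert symbol is characterized via the local Artin map $\mathrm{Art}$ by
\[
(a,b)_E \cdot b^{1/n} \;=\; \mathrm{Art}_{L/E}(a)\bigl(b^{1/n}\bigr), \qquad (c,b)_F \cdot b^{1/n} \;=\; \mathrm{Art}_{K/F}(c)\bigl(b^{1/n}\bigr),
\]
for $a \in E^\times$ and $c \in F^\times$, where $b^{1/n}$ is a chosen $n$-th root; the resulting ratio lies in $\mu_n$ and is independent of the choice.

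Next, I would invoke the standard norm-compatibility of the local Artin map, namely that for $a \in E^\times$ one has the identity in $\Gal(K/F)$
\[
\mathrm{res}\bigl(\mathrm{Art}_{L/E}(a)\bigr) \;=\; \mathrm{Art}_{K/F}\bigl(\Nm_{E/F}(a)\bigr).
\]
Evaluating both sides at $b^{1/n} \in K \subset L$ (so that restriction to $K$ computes the action on $b^{1/n}$) gives
\[
\mathrm{Art}_{L/E}(a)\bigl(b^{1/n}\bigr) \;=\; \mathrm{Art}_{K/F}\bigl(\Nm_{E/F}(a)\bigr)\bigl(b^{1/n}\bigr),
\]
and dividing through by $b^{1/n}$ yields the desired identity $(a,b)_E = (\Nm a, b)_F$. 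The degenerate case $b \in F^{\times n}$ reduces to $K=F$, $L=E$, where both symbols are trivially $1$.

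I do not foresee a real obstacle: the substantive input is the norm-compatibility of the Artin map, which is a standard piece of local class field theory. If one prefers a cohomological packaging, the same statement is precisely the projection formula for the cup product $F^\times/F^{\times n} \otimes F^\times/F^{\times n} \to H^2(F,\mu_n) \simeq \mu_n$ along $\mathrm{cor}_{E/F}$, using that $\mathrm{cor}_{E/F}$ on $H^0$ is the norm $\Nm_{E/F}$; this avoids even picking a root $b^{1/n}$. Either formulation is essentially a one-line application of class field theory once the definitions are unwound.
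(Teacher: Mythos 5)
Your proposal is correct and follows essentially the same route as the paper: both rest on the norm-compatibility of the local Artin (reciprocity) map and then evaluate the resulting Galois elements on a chosen root $b^{1/n}$; the paper phrases this via the commutative square relating $E^{\times}\to\Gal(E^{ab}/E)$ and $F^{\times}\to\Gal(F^{ab}/F)$, while you work with the finite Kummer layers $F(b^{1/n})\subset E(b^{1/n})$, which is only a cosmetic difference.
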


\begin{proof}
Observe the following commutative diagram from the local class field theory:
\begin{equation}
\begin{tikzcd}
E^{\times} \arrow{r}{\sigma} \arrow{d}{\Nm } & \Gal(E^{ab}/E) \arrow{d}{res} 	\\
F^{\times} \arrow{r}{\sigma} & \Gal(F^{ab}/F)
\end{tikzcd}
\end{equation}
For $a \in E^\times$, or $a \in F^\times$, let $\sigma_a = \sigma(a)$ be the corresponding element of 
$\Gal(E^{ab}/E)$, or $\Gal(F^{ab}/F)$ as the case may be.  By definition,
\[
(a,b)_{E} = \frac{\sigma_{a}(b^{1/n})}{b^{1/n}},
\]
therefore, we have
\[
(\Nm a,b)_{E} = \frac{\sigma_{\Nm a}(b^{1/n})}{b^{1/n}}.
\]
By the above commutative diagram we have
\[
\sigma_{a}|_{F^{ab}} = \sigma_{\Nm a}
\]
and then the proof of the lemma follows.
\end{proof}

Now we describe some properties of the group $\tilde{E}^{\times}$.

\begin{lemma} \label{group structure E}
The subgroup $\tilde{E}^{\times 2}$ is contained in the center 
of $ \tilde{E}^{\times}$, 
and the subgroup $\tilde{F}^{\times} \tilde{E}^{\times 2}$ of $\tilde{E}^{\times}$ is a maximal abelian subgroup of $ \tilde{E}^{\times}$.
\end{lemma}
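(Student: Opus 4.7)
The plan is to exploit the commutator formula $[\tilde{a},\tilde{b}] = (a,b)_E (\Nm a, \Nm b)_F$ from Lemma \ref{KP commutator}, combined with the compatibility $(a,b)_E = (\Nm a, b)_F$ for $a \in E^\times$, $b \in F^\times$ from Lemma \ref{Hilbert symbol E/F}. Together these turn every commutation question inside $\tilde{E}^\times$ into the vanishing of a Hilbert symbol over $F$.

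For centrality of $\tilde{E}^{\times 2}$ I would simply plug $c^2$ into one slot of the commutator formula; bilinearity of both Hilbert symbols forces each factor to be $1$, so any lift of a square is central. For the abelian claim, since the squares are already central it suffices to check that $\tilde{F}^\times$ commutes with itself. For $a,b \in F^\times$, Lemma \ref{Hilbert symbol E/F} rewrites $(a,b)_E$ as $(a^2,b)_F = 1$, and the norm factor is likewise trivial because $\Nm$ restricted to $F^\times$ is squaring. Hence $\tilde{F}^\times \tilde{E}^{\times 2}$ is abelian.

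The real content is maximality. Suppose $\tilde{a} \in \tilde{E}^\times$ commutes with every element of $\tilde{F}^\times \tilde{E}^{\times 2}$; I want to conclude $a \in F^\times E^{\times 2}$. Centrality of $\tilde{E}^{\times 2}$ reduces the commutation conditions to the single family of elements $\tilde{b}$ with $b \in F^\times$. Using the commutator formula and Lemma \ref{Hilbert symbol E/F}, that commutator simplifies to
\[
[\tilde{a},\tilde{b}] \;=\; (a,b)_E\,(\Nm a, b^2)_F \;=\; (a,b)_E \;=\; (\Nm a, b)_F,
\]
and requiring this to vanish for all $b \in F^\times$ forces $\Nm a \in F^{\times 2}$.

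The main obstacle is the converse implication: $\Nm a \in F^{\times 2}$ should yield $a \in F^\times E^{\times 2}$, which is not formally automatic. Writing $\Nm a = f^2$ with $f \in F^\times$, the element $a' := a/f$ lies in $E^1$, and I would invoke Hilbert~90 to write $a' = z/\bar z$ for some $z \in E^\times$. The identity $z/\bar z = z^2/\Nm(z)$ then exhibits $a'$, and hence $a = f a'$, as an element of $F^\times E^{\times 2}$. This last step amounts to the containment $E^1 \subseteq F^\times E^{\times 2}$ via Hilbert~90, which is the only nontrivial ingredient beyond bilinearity of Hilbert symbols.
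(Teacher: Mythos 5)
Your proposal is correct and follows essentially the same route as the paper: the same commutator computation reducing everything to $(\Nm a, b)_F = 1$ for all $b \in F^\times$, and the same final observation that $\Nm a \in F^{\times 2}$ forces $a \in F^\times E^{\times 2}$ via $E^1 \subset F^\times E^{\times 2}$ (your Hilbert~90 step with $z/\bar z = z^2/\Nm(z)$ is exactly the paper's parenthetical identity $e/\bar e = e^2/(e\bar e)$, just spelled out). No gaps.
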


\begin{proof}
From the commutator relation in Lemma \ref{KP commutator}, 
it is clear that $\tilde{E}^{\times 2}$ is contained in the center of $\tilde{E}^{\times}$.
From the same commutator relation combined with Lemma \ref{Hilbert symbol E/F}, it follows that  $\tilde{F}^{\times}$ is abelian. Since 
$\tilde{E}^{\times 2}$ is contained in the center of $\tilde{E}^{\times}$, the subgroup $\tilde{F}^{\times} \tilde{E}^{\times 2}$ is abelian.

To prove that the subgroup $\tilde{F}^{\times} \tilde{E}^{\times 2}$ is maximal abelian, 
let $\tilde{a} \in \tilde{E}^{\times}$ commute with all the elements of $\tilde{F}^{\times} \tilde{E}^{\times 2}$. 
We need to prove that $\tilde{a} \in \tilde{F}^{\times} \tilde{E}^{\times 2}$.

Since $\tilde{E}^{\times 2}$ is contained in the center of $\tilde{E}^{\times}$, $[\tilde{a}, \tilde{b}]=1$ for all $\tilde{b} \in \tilde{F}^{\times} \tilde{E}^{\times 2}$ is equivalent to $[\tilde{a}, \tilde{b}]=1$ 
for all $\tilde{b} \in \tilde{F}^{\times}$. 
Using Lemma \ref{KP commutator} and Lemma \ref{Hilbert symbol E/F}, for $b \in F^{\times}$, we have:
\begin{eqnarray*}
 [\tilde{a}, \tilde{b}]=1 
& \iff  &   (a,b)_{E} (\Nm a, \Nm b)_{F} = 1 \\
& \iff & (\Nm a,b)_{F} (\Nm a, b^{2})_{F} =  1 \\
& \iff & (\Nm a, b)_{F} = 1.
\end{eqnarray*}

Therefore if $ [\tilde{a}, \tilde{b}]=1 $ for all $b \in F^\times$, then $\Nm a \in F^{\times 2}$, and this is possible only if $a \in F^{\times} E^{\times 2}$ 
(observe that since $e/\bar{e} = e^2/(e\bar{e}),  E^1 \subset F^{\times} E^{\times 2}$). 
This proves that $\tilde{F}^{\times} \tilde{E}^{\times 2}$ is a maximal abelian subgroup of $\tilde{E}^{\times}$.
\end{proof}

\begin{lemma} \label{center of tilde T}
The group $\tilde{E}^{\times 2}$ is equal to the center of $\tilde{E}^{\times}$. 
\end{lemma}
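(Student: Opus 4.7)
The plan is to show both inclusions. One direction, namely $\tilde{E}^{\times 2} \subseteq Z(\tilde{E}^{\times})$, has already been recorded at the beginning of the proof of Lemma~\ref{group structure E} as an immediate consequence of the commutator formula in Lemma~\ref{KP commutator}. So the real content is the reverse inclusion: any element of the center lies in $\tilde{E}^{\times 2}$.

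For this, I would start from $\tilde{a} \in Z(\tilde{E}^{\times})$ and exploit the commutator formula $[\tilde{a}, \tilde{b}] = (a,b)_{E} (\Nm a, \Nm b)_{F}$ twice, on two nested families of test elements. First, testing against $\tilde{b}$ with $b \in F^{\times}$ and invoking Lemma~\ref{Hilbert symbol E/F} to rewrite $(a,b)_E = (\Nm a, b)_F$, the computation already carried out inside the proof of Lemma~\ref{group structure E} gives $(\Nm a, b)_F = 1$ for all $b \in F^{\times}$, hence $\Nm a \in F^{\times 2}$, and therefore $a \in F^{\times} E^{\times 2}$ (using $E^1 \subseteq F^{\times} E^{\times 2}$ as noted earlier).

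Now write $a = c\, e^{2}$ with $c \in F^{\times}$ and $e \in E^{\times}$. Since $\tilde{E}^{\times 2}$ is already central, $\tilde{a}$ commuting with everything is equivalent to $\tilde{c}$ commuting with everything. Applying Lemma~\ref{KP commutator} to $\tilde{c}$ against an arbitrary $\tilde{b}$ with $b \in E^{\times}$, and using $\Nm c = c^{2}$ to kill the second factor $(\Nm c, \Nm b)_F = (c^{2}, \Nm b)_F = 1$, we are left with $(c,b)_{E} = 1$ for every $b \in E^{\times}$. By non-degeneracy of the Hilbert symbol on $E^{\times}/E^{\times 2}$, this forces $c \in E^{\times 2}$, and hence $a = c e^{2} \in E^{\times 2}$, as desired.

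The main obstacle is really just the bookkeeping of reducing the commutation condition on $\tilde{E}^{\times}$ to a non-degenerate Hilbert-symbol condition on $E^{\times}$; once one sees that the $F$-direction extracts $\Nm a \in F^{\times 2}$ and the subsequent $E$-direction kills the norm contribution (because $\Nm c = c^{2}$), there are no further subtleties and no need to distinguish residue characteristic. No case analysis on whether $E/F$ is ramified or unramified is required for this argument.
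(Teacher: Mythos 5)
Your argument is correct, and its first half is exactly the paper's: a central element commutes in particular with $\tilde{F}^{\times}$, so by the computation in the proof of Lemma \ref{group structure E} it lies in $\tilde{F}^{\times}\tilde{E}^{\times 2}$, and since $\tilde{E}^{\times 2}$ is already central one may reduce to an element $c \in F^{\times}$. Where you diverge is the final step. The paper stays inside $F^{\times}/F^{\times 2}$: it notes that centrality of $\tilde{c}$ means $(c,\Nm e)_{F}=1$ for all $e \in E^{\times}$, computes the orthogonal complement of $\Nm E^{\times}$ under the $F$-Hilbert symbol explicitly as $\langle F^{\times 2}, d\rangle$ for $E=F(\sqrt{d})$ (using $(d,\Nm e)_F=1$ and the index count), and then observes that $d$, hence all of $\langle F^{\times 2},d\rangle$, lies in $E^{\times 2}$. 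You instead work over $E$: since $\Nm c=c^{2}$ kills the second factor in Lemma \ref{KP commutator}, centrality becomes $(c,b)_{E}=1$ for all $b \in E^{\times}$, and non-degeneracy of the quadratic Hilbert symbol on $E^{\times}/E^{\times 2}$ gives $c \in E^{\times 2}$ at once. The two conditions are in fact equivalent (by Lemma \ref{Hilbert symbol E/F}, $(c,b)_{E}=(c,\Nm b)_{F}$), so the proofs are parallel; yours buys a slightly shorter ending that never needs to name the generator $d$ or identify the orthogonal complement of the norm group, while the paper's version is more explicit about which elements of $F^{\times}$ become squares in $E^{\times}$, which is information reused in Remark \ref{p=2}. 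Your closing observation that no assumption on the residue characteristic is needed is also consistent with the paper.
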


\begin{proof} We already know that $\tilde{E}^{\times 2}$ is contained in the center of $\tilde{E}^{\times}$, 
and that
$\tilde{F}^\times \cdot \tilde{E}^{\times 2}$ is a maximal abelian subgroup of $\tilde{E}^{\times}$. Let 
$f \in \tilde{F}^\times$  be in the centre of $\tilde{E}^{\times}$. It follows that, 
$$(f,\Nm e)_F=1, ~~~~~\forall e \in E^\times.$$
Since the Hilbert symbol is a non-degenerate bilinear form on $F^\times/F^{\times 2}$, and $\Nm E^\times$ is an index 
2 subgroup of $F^\times$, the orthogonal complement of $\Nm E^\times$ (this is defined to be the set of elements $a \in E^{\times}$ such that $(a,x)=1$ for all $x \in \Nm E^{\times}$) must contain $F^{\times 2}$ as a subgroup of index 2.

Suppose that $E= F(\sqrt{d})$. Observe the identity:

$$X^2 - dY^2 + dY^2 = X^2.$$
By the definition of the Hilbert symbol, this means that
$$(d,\Nm e)_F = 1,$$
for all $e \in E^\times$. Since $d$ is not a square in $F^\times$, it follows that the group generated by $F^{\times 2}$
and $d$ inside $F^\times$, i.e. $\langle F^{\times 2}, d \rangle$,  is the orthogonal complement on $\Nm E^\times$ for the Hilbert symbol of $F$.

It follows that if $f\in F^\times$ commutes with $E^{\times 2}$, then $f \in \langle F^{\times 2}, d \rangle$.
Since $d$ has a square root in $E^\times$ by definition, it follows that $\langle f, E^{\times 2} \rangle = E^{\times 2}$. This proves that $\tilde{E}^{\times 2}$ is equal to the center of $\tilde{E}^{\times}$. 
\end{proof}

\section{Heisenberg group and its representations}
The inverse images of tori (both split and non-split) of $\GL_{2}(F)$ inside $\widetilde{\GL}_{2}(F)$
are extensions of abelian groups by $\mu_2$, and  may be called `Heisenberg groups'. 
Although Heisenberg groups are omnipresent in representation theory, we do not know a convenient reference for our use, so we have preferred to define them and prove some of their key properties that will be used throughout the paper.

\begin{definition}[Heisenberg Group] \label{Heisenberg group}
A group $\Sigma$ with center $Z(\Sigma)$ with $\Sigma/Z(\Sigma)$ finite, and with $ [\Sigma,\Sigma] \cong \Z/p\Z \subset Z(\Sigma)$ for some prime $p$, will be called a Heisenberg group.
\end{definition}

For such a group $\Sigma$, the quotient $\Sigma/Z(\Sigma)$ is clearly an abelian group, and the commutator map defines a bilinear form
\begin{equation} \label{bilinear form}
B : \Sigma/Z(\Sigma) \times \Sigma/Z(\Sigma) \rightarrow \Z/p.
\end{equation}
i.e., for $e_{1}, e_{2} \in \Sigma/ Z(\Sigma)$, we have $B(e_{1}, e_{2}) := [\tilde{e}_{1}, \tilde{e}_{2}]$ where $\tilde{e}_{1}, \tilde{e}_{2}$ are arbitrary lifts of $e_{1}, e_{2}$ in $\Sigma$. 
For an abelian group $X$, let $\hat{X}$ denote the group of characters of $X$.
Define a homomorphism $f_{B} : \Sigma/Z(\Sigma) \rightarrow \widehat{\Sigma/Z(\Sigma)}$ as follows:
for all $a, e \in \Sigma/Z(\Sigma)$,
\[
f_{B}(a) (e) := \exp ^{\frac{2\pi i}{p} B(a,e)}.
\]

Observe that the homomorphism $f_{B}$ is injective (if $[\tilde{a},\tilde{e}]=1 \forall e \in \Sigma,$ 
then by definition, $\tilde{a} \in Z(\Sigma)$).
The bilinear form $B$ is said to be non-degenerate if the corresponding homomorphisms $f_{B}$ from $\Sigma/Z(\Sigma)$ to its character group is an isomorphism. 
In terms of the bilinear form $B$, a subgroup $A$ of $\Sigma$ is abelian if and only if the bilinear form $B$ on $AZ(\Sigma)/Z(\Sigma)$ is identically zero. 
The subgroups of $\Sigma/Z(\Sigma)$ on which the bilinear form $B$ is identically zero are called isotropic subgroups.
It follows that a subgroup $A$ of $\Sigma$ containing $Z(\Sigma)$ 
is maximal abelian if and only if its image in $\Sigma/Z(\Sigma)$  is maximal isotropic. 
If $A$ is isotropic, the natural map from $\Sigma/A$ to the set of characters $\hat{A}$ of $A$ is surjective and if $A$ is maximal isotropic, this map is an isomorphism.
Note that if $A$ is an abelian subgroup of $\Sigma$ then the subgroup of $\Sigma$ generated by $A$ and $Z(\Sigma)$ is also abelian.
It follows that if $A$ is a maximal abelian subgroup of $\Sigma$ then $A$ necessarily contains the center $Z(\Sigma)$ of $\Sigma$.

\begin{lemma} \label{index relation in H-group}
Let $\Sigma$ be a Heisenberg group in the sense of Definition \ref{Heisenberg group} for which the corresponding bilinear form $B$ given in (\ref{bilinear form}) is non-degenerate. Let $A$ be a maximal abelian subgroup of $\Sigma$, then
\[
[A : Z(\Sigma)]^{2} =[\Sigma : Z(\Sigma)].
\]
\end{lemma}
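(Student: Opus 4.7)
The plan is to reduce the statement to a standard fact about non-degenerate alternating (in fact, symplectic in char $p$) bilinear forms on finite abelian groups: any maximal isotropic subspace has order equal to the square root of the order of the ambient space. To this end, set $\bar{\Sigma} := \Sigma/Z(\Sigma)$ and $\bar{A} := A/Z(\Sigma)$; note that $\bar{A}$ is well-defined and meaningful because, as observed just before the lemma, any maximal abelian subgroup $A$ automatically contains $Z(\Sigma)$. The first step is to identify $\bar{A}$ as a maximal isotropic subgroup of $\bar{\Sigma}$ with respect to the bilinear form $B$: by the characterization already recorded in the excerpt, $A$ is abelian iff $B$ vanishes on $\bar{A}$, and $A$ is maximal abelian iff $\bar{A}$ is maximal isotropic.

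Next, I would introduce the orthogonal complement
\[
\bar{A}^{\perp} := \{ x \in \bar{\Sigma} : B(x,a) = 0 \text{ for all } a \in \bar{A} \}
\]
and argue that $\bar{A}^{\perp} = \bar{A}$. The inclusion $\bar{A} \subseteq \bar{A}^{\perp}$ is the isotropy condition. Conversely, if $x \in \bar{A}^{\perp}$, then a lift $\tilde{x} \in \Sigma$ commutes with every element of $A$, so the subgroup generated by $\tilde{x}$ and $A$ is still abelian; by maximality of $A$ we conclude $\tilde{x} \in A$, hence $x \in \bar{A}$.

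The main step is to compute $|\bar{A}^{\perp}|$ using non-degeneracy. The map $f_B : \bar{\Sigma} \to \widehat{\bar{\Sigma}}$ defined in the excerpt is by assumption an isomorphism, and by construction $f_B(\bar{A}^{\perp})$ consists precisely of those characters of $\bar{\Sigma}$ that are trivial on $\bar{A}$. This subgroup of characters is canonically the dual of $\bar{\Sigma}/\bar{A}$, so
\[
|\bar{A}^{\perp}| = |\widehat{\bar{\Sigma}/\bar{A}}| = |\bar{\Sigma}/\bar{A}| = \frac{|\bar{\Sigma}|}{|\bar{A}|}.
\]
Combining with $\bar{A}^{\perp} = \bar{A}$ yields $|\bar{A}|^2 = |\bar{\Sigma}|$, which translates back to $[A : Z(\Sigma)]^2 = [\Sigma : Z(\Sigma)]$.

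There is no serious obstacle; the only thing that requires mild care is the identity $\bar{A}^{\perp} = \bar{A}$, which uses both the maximality of $A$ (for the nontrivial inclusion) and the fact that $\bar{\Sigma}$ is finite so that Pontryagin duality gives $|\widehat{X}| = |X|$ for $X = \bar{\Sigma}/\bar{A}$. The finiteness of $\bar{\Sigma}$ is guaranteed by the hypothesis that $\Sigma/Z(\Sigma)$ is finite in Definition \ref{Heisenberg group}.
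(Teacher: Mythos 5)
Your proof is correct and is essentially the paper's argument in dual form: the paper deduces the count from the isomorphism $\Sigma/A \xrightarrow{\sim} \widehat{A/Z(\Sigma)}$ induced by the commutator pairing, while you phrase the same facts as the self-orthogonality $\bar{A}^{\perp}=\bar{A}$ (maximality) together with $|\bar{A}^{\perp}|=|\bar{\Sigma}|/|\bar{A}|$ (non-degeneracy plus finite Pontryagin duality). Both routes use exactly the same inputs and yield $[A:Z(\Sigma)]^{2}=[\Sigma:Z(\Sigma)]$, so no further changes are needed.
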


\begin{proof}
Because of the non-degeneracy of the bilinear form $B$, the natural map $\Sigma/A \rightarrow \widehat{A/Z(\Sigma)}$ is an isomorphism and hence $[\Sigma : A] = \mid \widehat{A/Z(\Sigma)} \mid$.
The lemma follows from the obvious relations:
\begin{center}
$[\Sigma : A]\cdot [A : Z(\Sigma)] =[\Sigma : Z(\Sigma)] $ 
and $[A : Z(\Sigma)] = \mid \widehat{A/Z(\Sigma)} \mid$.
\end{center}
\end{proof}

A key property of Heisenberg groups that we will use is the following.
Let $A$ be a maximal abelian subgroup of $\Sigma$ (thus containing $Z(\Sigma)$). 
Then $\Sigma/A$ is an abelian group and naturally acts on $\hat{A}$, the set of characters of $A$ which are non-trivial on 
$\Z/p \subset Z(\Sigma)$.
This action of $\Sigma/A$ on $\hat{A}$ is faithful, i.e., $\chi^e \not = \chi$ for $e \not = 1$ in $\Sigma/A$. Equivalently,
 if $e \in \Sigma \setminus A$, 
$\chi \in \hat{A}$ is non-trivial on $\Z/p \subset Z(\Sigma)$, then there exists an $a \in A$ such that $\chi(eae^{-1}) \neq \chi(a)$, i.e. $\chi(eae^{-1} a^{-1}) \neq 1$.
By the maximality of the abelian subgroup $A$ inside $\Sigma$, given $e \in \Sigma \setminus A$ there exists an $a \in A$ 
such that $eae^{-1}a^{-1} \neq 1$. 
Since 
$eae^{-1}a^{-1} \in \Z/p$ and any nontrivial element of $\Z/p$ generates $\Z/p$, it follows that for any nontrivial
character $\chi$ of $A$ which is nontrivial on $\Z/p$, $\chi(eae^{-1}a^{-1}) \not = 1$ for any $e \in \Sigma \setminus A$. 

\begin{definition}
An irreducible 
 representation of a subgroup of a Heisenberg group $\Sigma$ which contains $Z(\Sigma)$, the center of $\Sigma$, is called genuine if its restriction to $\Z /p \subset Z(\Sigma)$ is a non-trivial character of $\Z /p$.
\end{definition}

\begin{proposition} \label{reps of H-groups}
Let $\Sigma_{1}$ be a maximal abelian subgroup of a Heisenberg group $\Sigma$ (such a subgroup is automatically normal and contains the center $Z(\Sigma)$ of $\Sigma$).
Then
\begin{enumerate}
\item Any irreducible genuine representation of $\Sigma$  is obtained by inducing a genuine character of $\Sigma_{1}$.
\item Conversely, $\Ind_{\Sigma_{1}}^{\Sigma} \lambda$ is irreducible for any character $\lambda:\Sigma_{1} \rightarrow \C^\times$ with
$\lambda|_{\Z/p} \not = 1$.
\item For characters $\lambda_1,\lambda_2: \Sigma_1 \rightarrow \C^\times$, we have 
$\Ind_{\Sigma_{1}}^{\Sigma} \lambda_{1} \cong \Ind_{\Sigma_{1}}^{\Sigma} \lambda_{2}$ if and only if $\lambda_{1} = \lambda^s_{2}$ 
for some  $s \in \Sigma$.
\item The restriction of an irreducible genuine representation of $\Sigma$ to $\Sigma_1$ is a sum of distinct genuine characters $\lambda^s : \Sigma_1 \rightarrow \C^\times$,  for 
$s \in \Sigma/\Sigma_1$.
\item The restriction of an irreducible genuine representation $\sigma$ of $\Sigma$ to $\Sigma_{1}$ is sum of all genuine characters of $\Sigma_{1}$ with multiplicity 1 whose restriction to $Z(\Sigma)$ is $\omega_{\sigma}$, the central character of $\sigma$, i.e., 
%$\sigma|_{\Sigma_1}= \ind_{Z(\Sigma)}^{\Sigma_1} \omega_{\sigma}$. 
\begin{center}
$\sigma|_{\Sigma_1}= \ind_{Z(\Sigma)}^{\Sigma_1} \omega_{\sigma}$.
\end{center}
\end{enumerate}
\end{proposition}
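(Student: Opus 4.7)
The plan is to first establish part (2), and then deduce the remaining parts by routine Frobenius reciprocity and Mackey theory. Before starting, two structural observations should be recorded. First, for $g, h \in \Sigma$ we have $[g^p, h] = [g,h]^p = 1$ since $[g,h] \in \Z/p$ is central, so $\Sigma/Z(\Sigma)$ is an elementary abelian $p$-group and $B$ takes values in $\F_p$; because the injection $f_B : \Sigma/Z(\Sigma) \hookrightarrow \widehat{\Sigma/Z(\Sigma)}$ is a map between finite groups of the same order, $B$ is automatically non-degenerate, so Lemma \ref{index relation in H-group} applies. Second, $\Sigma_1$ is normal in $\Sigma$: it contains $Z(\Sigma)$, and its image in the abelian group $\Sigma/Z(\Sigma)$ is automatically normal.

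For (2), I would apply Mackey's irreducibility criterion for induction from the normal subgroup $\Sigma_1$: $\Ind_{\Sigma_1}^\Sigma \lambda$ is irreducible if and only if $\lambda^s \neq \lambda$ for every $s \in \Sigma \setminus \Sigma_1$. Given such $s$, the maximality of the abelian subgroup $\Sigma_1$ yields some $a \in \Sigma_1$ with $[s,a] \neq 1$; since $[s,a] \in \Z/p$ is cyclic of prime order, this commutator generates $\Z/p$. Because $\lambda$ is genuine, i.e.\ nontrivial on $\Z/p$, we obtain $\lambda(sas^{-1}a^{-1}) \neq 1$, hence $\lambda^s \neq \lambda$. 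This is exactly the structural property singled out in the paragraph preceding Definition 4.4.

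Statement (1) then follows immediately: for an irreducible genuine representation $\pi$ of $\Sigma$, the restriction $\pi|_{\Sigma_1}$ decomposes as a sum of characters (since $\Sigma_1$ is abelian), each extending $\omega_\pi|_{Z(\Sigma)}$ and hence genuine. Choosing any appearing character $\lambda$, Frobenius reciprocity produces a nonzero $\Sigma$-morphism $\Ind_{\Sigma_1}^\Sigma \lambda \to \pi$, which is an isomorphism by the irreducibility established in (2). For (3), Mackey's restriction formula gives $(\Ind_{\Sigma_1}^\Sigma \lambda_2)|_{\Sigma_1} = \bigoplus_{s \in \Sigma/\Sigma_1} \lambda_2^s$, and then Frobenius yields $\Hom_\Sigma(\Ind \lambda_1, \Ind \lambda_2) \neq 0$ iff $\lambda_1 = \lambda_2^s$ for some $s \in \Sigma$. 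Statement (4) is the same Mackey decomposition combined with the distinctness of the $\lambda^s$ already established in the proof of (2).

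For (5), part (4) together with Lemma \ref{index relation in H-group} shows that $\sigma|_{\Sigma_1}$ is a multiplicity-free sum of $[\Sigma : \Sigma_1] = [\Sigma_1 : Z(\Sigma)]$ distinct genuine characters of $\Sigma_1$ extending $\omega_\sigma$. Since the set of all genuine extensions of $\omega_\sigma$ to $\Sigma_1$ is a torsor over $\widehat{\Sigma_1/Z(\Sigma)}$, it has size exactly $[\Sigma_1 : Z(\Sigma)]$; hence every such extension appears with multiplicity one, and by Frobenius this gives the stated identification with $\ind_{Z(\Sigma)}^{\Sigma_1}\omega_\sigma$. The main obstacle in the whole argument is the irreducibility step (2), which relies on two specific features of the setup: the maximality of $\Sigma_1$ (to guarantee a nontrivial commutator) and the primality of $|[\Sigma,\Sigma]|$ (so that any nontrivial commutator generates $\Z/p$ and is detected by every genuine character). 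Both ingredients are already in hand, so the proof is essentially the careful assembly of these observations.
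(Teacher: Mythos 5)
Your proposal is correct and follows essentially the same route as the paper: the heart in both cases is the faithfulness of the action of $\Sigma/\Sigma_{1}$ on genuine characters (maximality of $\Sigma_{1}$ plus the fact that any nontrivial element of $\Z/p$ generates it and is detected by a genuine character), fed into Clifford/Mackey theory for parts (1)--(4). Your count of genuine extensions of $\omega_\sigma$ in part (5) is just a rephrasing of the paper's dimension comparison via Lemma \ref{index relation in H-group}, so nothing essentially new there either.
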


\begin{proof}
Let $\pi$ be any irreducible genuine representation of $\Sigma$, and let $\lambda$ be a character of $\Sigma_{1}$ which appears in $\pi$ restricted to $\Sigma_{1}$.
By Clifford theory, if the action of $\Sigma/\Sigma_{1}$ on genuine characters of $\Sigma_{1}$ is faithful, then $\pi \cong \Ind_{\Sigma_{1}}^{\Sigma} \lambda$ for any character $\lambda$ of $\Sigma_{1}$ appearing in $\pi$. 
By the basic property of Heisenberg groups established already, we do know that the action of $\Sigma/\Sigma_{1}$ on genuine characters of $\Sigma_{1}$ is faithful proving part $(1)$ and $(2)$ of the proposition. 
Part (3) and (4) are clear as well.
For part (5), it is clear that $\ind_{Z(\Sigma)}^{\Sigma_1} \omega_{\sigma}$ is contained in $\sigma$ restricted to $\Sigma_1$. 
To prove equality, it suffices to prove that the two representations have the same dimension. 
Observe that $\dim \ind_{Z(\Sigma)}^{\Sigma_1} \omega_{\sigma} = \#(\Sigma_{1}/Z(\Sigma))$ where as by part (1), $\dim \sigma = \#(\Sigma/\Sigma_{1})$. 
However by Lemma \ref{index relation in H-group}, $\#(\Sigma/Z(\Sigma))= \#(\Sigma/\Sigma_{1})^2$. 
Therefore, $\#(\Sigma/\Sigma_{1}) = \#(\Sigma/Z(\Sigma))$.
\end{proof}

\begin{corollary} \label{restriction to max-abelian}
Suppose $\Sigma_{2}$ is an abelian subgroup of a Heisenberg group $\Sigma$ with $\Sigma_{2} Z(\Sigma)$ a maximal abelian in $\Sigma$. 
Then the restriction of an irreducible representation $\sigma$ of $\Sigma$ to $\Sigma_{2}$ is 
\begin{center}
$\ind_{{\Sigma_{2}} \cap Z(\Sigma)}^{\Sigma_{2}} \omega_{\sigma}$.
\end{center} 
\end{corollary}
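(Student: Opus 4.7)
The plan is to reduce this to Proposition \ref{reps of H-groups}(5) applied to the maximal abelian subgroup $\Sigma_{1} := \Sigma_{2} Z(\Sigma)$, and then to restrict the resulting induced character down from $\Sigma_{1}$ to $\Sigma_{2}$ by an easy Mackey/induction-in-stages argument that uses only the abelian structure of $\Sigma_{1}$.

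First, since $\Sigma_{2} Z(\Sigma)$ is maximal abelian in $\Sigma$, Proposition \ref{reps of H-groups}(5) gives
\[
\sigma|_{\Sigma_{2} Z(\Sigma)} \;=\; \ind_{Z(\Sigma)}^{\Sigma_{2} Z(\Sigma)} \omega_{\sigma}.
\]
Next I would restrict this further to $\Sigma_{2}$. Because $\Sigma_{2} Z(\Sigma)$ is abelian, there is a single double coset $Z(\Sigma) \backslash \Sigma_{2} Z(\Sigma) / \Sigma_{2}$, so Mackey's formula collapses to
\[
\bigl(\ind_{Z(\Sigma)}^{\Sigma_{2} Z(\Sigma)} \omega_{\sigma}\bigr)\Big|_{\Sigma_{2}}
\;=\; \ind_{Z(\Sigma) \cap \Sigma_{2}}^{\Sigma_{2}} \bigl(\omega_{\sigma}|_{Z(\Sigma) \cap \Sigma_{2}}\bigr),
\]
which is exactly $\ind_{\Sigma_{2} \cap Z(\Sigma)}^{\Sigma_{2}} \omega_{\sigma}$.

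As a sanity check (and a concrete alternative way to see the last step), one can describe both sides as sums of characters. On the left, $\ind_{Z(\Sigma)}^{\Sigma_{2} Z(\Sigma)} \omega_{\sigma}$ is the sum of all characters of $\Sigma_{2} Z(\Sigma)$ whose restriction to $Z(\Sigma)$ equals $\omega_{\sigma}$; restricting each such character to $\Sigma_{2}$ gives a character of $\Sigma_{2}$ agreeing with $\omega_{\sigma}$ on $\Sigma_{2} \cap Z(\Sigma)$, and this assignment is a bijection onto the set of such characters of $\Sigma_{2}$ (an extension of a character of $\Sigma_{2} \cap Z(\Sigma)$ to $\Sigma_{2}$, together with $\omega_{\sigma}$ on $Z(\Sigma)$, uniquely determines a character of the product). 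The cardinalities match because $[\Sigma_{2} Z(\Sigma): Z(\Sigma)] = [\Sigma_{2}: \Sigma_{2} \cap Z(\Sigma)]$, so this yields precisely $\ind_{\Sigma_{2} \cap Z(\Sigma)}^{\Sigma_{2}} \omega_{\sigma}$.

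There is essentially no obstacle here; the only point that requires mild care is the implicit identification of $\omega_{\sigma}$ with its restriction to $\Sigma_{2} \cap Z(\Sigma)$ in the statement, and checking that the Mackey double coset space really is a single point (which is immediate from commutativity of $\Sigma_{2} Z(\Sigma)$). The corollary is thus a direct consequence of Proposition \ref{reps of H-groups}(5) and induction in stages.
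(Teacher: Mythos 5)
Your proposal is correct and follows exactly the route the paper intends: the corollary is stated there without proof as an immediate consequence of Proposition \ref{reps of H-groups}(5), namely apply part (5) to $\Sigma_1=\Sigma_2 Z(\Sigma)$ and then restrict to $\Sigma_2$, where the single-coset Mackey step (or equivalently your character-by-character count, using $[\Sigma_2 Z(\Sigma):Z(\Sigma)]=[\Sigma_2:\Sigma_2\cap Z(\Sigma)]$) gives $\ind_{\Sigma_2\cap Z(\Sigma)}^{\Sigma_2}\omega_\sigma$. The only implicit point, which you correctly note, is that ``irreducible representation'' here means irreducible genuine representation, so that part (5) applies.
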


\section{Representations of $\tilde{T}$ and $\tilde{E}^{\times}$}
To describe the representations of $\tilde{T}$ and $\tilde{E}^{\times}$ we first note the following fact.

\begin{lemma}
The groups $\tilde{T}$ and $\tilde{E}^{\times}$ are Heisenberg groups in the sense of definition \ref{Heisenberg group} with 
$[\tilde{T},\tilde{T}] = \Z/2 = [\tilde{E}^\times, \tilde{E}^\times]$.
Moreover, the bilinear forms corresponding to the Heisenberg groups $\tilde{T}$ and $\tilde{E}^{\times}$, as defined in (\ref{bilinear form}), are non-degenerate.
\end{lemma}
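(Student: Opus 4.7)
The plan is to verify the three conditions of Definition \ref{Heisenberg group} separately for $\tilde{T}$ and $\tilde{E}^{\times}$, and then deduce non-degeneracy of the bilinear form $B$ from the automatic injectivity of $f_B$ by a counting argument.

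First, I would use the previous section's lemmas to identify the centers. By Lemma \ref{group structure T}, $Z(\tilde{T}) = \tilde{T}^2$, and by Lemma \ref{center of tilde T}, $Z(\tilde{E}^\times) = \tilde{E}^{\times 2}$. Hence the relevant quotients are $\tilde{T}/Z(\tilde{T}) \cong T/T^2 \cong (F^\times/F^{\times 2})^2$ and $\tilde{E}^\times/Z(\tilde{E}^\times) \cong E^\times/E^{\times 2}$, both finite since squares form a finite-index subgroup of the multiplicative group of any non-Archimedean local field.

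Next, I would pin down the commutator subgroups. The commutator formula of equation (2) for $\tilde{T}$ and Lemma \ref{KP commutator} for $\tilde{E}^\times$ already show $[\tilde{T},\tilde{T}] \subseteq \mu_2$ and $[\tilde{E}^\times,\tilde{E}^\times] \subseteq \mu_2$, so it only remains to verify equality, i.e.\ non-abelianness. But this is immediate from the strict inclusions $Z(\tilde{T}) = \tilde{T}^2 \subsetneq \tilde{T}$ and $Z(\tilde{E}^\times) = \tilde{E}^{\times 2} \subsetneq \tilde{E}^\times$, which hold because $F^{\times 2}$ and $E^{\times 2}$ are proper subgroups. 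Hence both commutator subgroups are exactly $\mu_2 \cong \Z/2\Z$, and they lie in the center because $\mu_2$ is the kernel of the covering map.

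Finally, for non-degeneracy: the map $f_B : \Sigma/Z(\Sigma) \to \widehat{\Sigma/Z(\Sigma)}$ is injective essentially by the definition of the center, as already remarked in Section 4 (if $\tilde{a}$ pairs trivially with every element of $\Sigma$ via $B$, then $\tilde{a}$ commutes with all of $\Sigma$, hence lies in $Z(\Sigma)$). Since $\Sigma/Z(\Sigma)$ is a finite abelian group and has the same cardinality as its Pontryagin dual, any injective homomorphism between them is automatically an isomorphism. This settles non-degeneracy for both $\tilde{T}$ and $\tilde{E}^\times$. I do not anticipate a real obstacle: the argument is essentially bookkeeping that assembles facts already proved in Section 3, with the only non-formal ingredient being the finiteness of $F^\times/F^{\times 2}$ and $E^\times/E^{\times 2}$.
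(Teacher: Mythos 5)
Your proposal is correct and follows essentially the same route as the paper: bound the commutator subgroups by $\mu_2$ using the commutation formulas, deduce equality from non-abelianness (the paper cites the existence of a proper maximal abelian subgroup, you cite the properness of the center --- both drawn from the Section 3 lemmas), and obtain non-degeneracy from the automatic injectivity of $f_B$ together with the equal, finite cardinalities of $\Sigma/Z(\Sigma)$ and its character group. No gaps to flag.
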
 

\begin{proof} Since $T$ is abelian, $[\tilde{T},\tilde{T}] \subset \Z/2 $, and similarly $E^\times$ being abelian,
$ [\tilde{E}^\times, \tilde{E}^\times] \subset \Z/2$. Since we know that $\tilde{T}$ as well as $\tilde{E}^\times$ are non-abelian
(because we know they have a proper maximal abelian subgroup!), it follows that $[\tilde{T},\tilde{T}] = \Z/2 = [\tilde{E}^\times, \tilde{E}^\times]$.

Now we prove the non-degeneracy of the bilinear forms for $\tilde{T}$ and $\tilde{E}^{\times}$.
For the rest of the proof, write $\Sigma$ for either of the two Heisenberg groups $\tilde{T}$ and $\tilde{E}^{\times}$.
We need to prove that the homomorphism
\begin{equation} \label{bijection1}
\Sigma/ Z(\Sigma) \rightarrow \widehat{\Sigma/ Z(\Sigma)}
\end{equation}
defined by $e \mapsto (x \mapsto [\tilde{e}, \tilde{x}])$, where $\tilde{e}$ and $\tilde{x}$ are arbitrary lift of $e$ and $x$ in $\Sigma$, is an isomorphism.
Recall that the indices $[\tilde{T} : Z(\tilde{T})] = [\tilde{T} : \tilde{T}^{2}] = [F^{\times} : F^{\times 2}]^{2}$ and $[\tilde{E}^{\times} : \tilde{E}^{\times 2}] = [E^{\times} : E^{\times 2}]$ are finite.
Since $\Sigma/ Z(\Sigma)$ is a  finite abelian group, the cardinality of $\Sigma/ Z(\Sigma)$ and $\widehat{(\Sigma/ Z(\Sigma))}$ are the same.
Since the map (\ref{bijection1}) is known to be injective, it is also surjective.
\end{proof}

\begin{corollary}\label{dim}
For a maximal abelian subgroup $A$ of $\tilde{E}^{\times }$
(necessarily containing $\tilde{E}^{\times 2}$), 
we have $[\tilde{E}^{\times} : \tilde{E}^{\times 2}] = [\tilde{E}^{\times} : A]^{2}$. In particular, $[E^{\times} : E^{\times 2}] = [E^{\times} : F^{\times} E^{\times 2}]^{2}$.
\end{corollary}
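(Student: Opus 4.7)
The proof is essentially a direct packaging of the results already established, so there is no serious obstacle: the plan is just to assemble the pieces in the right order.

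For the first assertion, I would invoke Lemma \ref{index relation in H-group} with $\Sigma = \tilde{E}^{\times}$. The hypotheses are in place: by the lemma immediately preceding the corollary, $\tilde{E}^{\times}$ is a Heisenberg group with non-degenerate commutator form, and by Lemma \ref{center of tilde T} its center is exactly $\tilde{E}^{\times 2}$. Any maximal abelian subgroup $A$ of $\tilde{E}^{\times}$ automatically contains $Z(\tilde{E}^{\times}) = \tilde{E}^{\times 2}$ (as observed in the Heisenberg group discussion). Lemma \ref{index relation in H-group} then gives $[A : \tilde{E}^{\times 2}]^{2} = [\tilde{E}^{\times} : \tilde{E}^{\times 2}]$, and combining with the multiplicative relation
\[
[\tilde{E}^{\times} : A]\cdot [A : \tilde{E}^{\times 2}] = [\tilde{E}^{\times} : \tilde{E}^{\times 2}]
\]
immediately yields $[\tilde{E}^{\times} : \tilde{E}^{\times 2}] = [\tilde{E}^{\times} : A]^{2}$.

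For the ``in particular'' statement, I would specialize to $A = \tilde{F}^{\times}\tilde{E}^{\times 2}$, which is a maximal abelian subgroup by Lemma \ref{group structure E}. It then remains to descend the equality from $\tilde{E}^{\times}$ to $E^{\times}$. Since the covering $\tilde{E}^{\times} \to E^{\times}$ is $2$-to-$1$ and each of $\tilde{E}^{\times 2}$ and $\tilde{F}^{\times}\tilde{E}^{\times 2}$ is a full preimage of its image in $E^{\times}$, the $\mu_{2}$ factor cancels in each index, giving
\[
[\tilde{E}^{\times} : \tilde{E}^{\times 2}] = [E^{\times} : E^{\times 2}], \qquad [\tilde{E}^{\times} : \tilde{F}^{\times}\tilde{E}^{\times 2}] = [E^{\times} : F^{\times}E^{\times 2}].
\]
Substituting into the first assertion produces the identity $[E^{\times} : E^{\times 2}] = [E^{\times} : F^{\times}E^{\times 2}]^{2}$, as claimed. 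The only step that requires any care is the bookkeeping with the $\mu_{2}$-indices when passing between the cover and the base, but this is routine.
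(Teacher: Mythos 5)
Your proof is correct and is essentially the paper's own (implicit) argument: the corollary is Lemma \ref{index relation in H-group} applied to $\Sigma = \tilde{E}^{\times}$, whose hypotheses are supplied by the non-degeneracy lemma immediately preceding the corollary and by Lemma \ref{center of tilde T}, combined with the trivial index manipulation $[\tilde{E}^{\times}:A][A:\tilde{E}^{\times 2}]=[\tilde{E}^{\times}:\tilde{E}^{\times 2}]$. The specialization to $A=\tilde{F}^{\times}\tilde{E}^{\times 2}$ via Lemma \ref{group structure E} and the cancellation of the $\mu_{2}$-factor in each index (all subgroups involved being full preimages) is exactly the intended route to the statement about $E^{\times}$ itself.
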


\begin{remark} \label{p=2}
Assume $p = 2$ for this remark.
It is known that $[E^{\times} : E^{\times 2}] = 4 \cdot 2^{deg(E/\Q_{2})}$ and $[F^{\times} : F^{\times 2}] = 4 \cdot 2^{deg(F/\Q_{2})}$. 
For $E = F(\sqrt{d})$ we have $F^{\times} \cap E^{\times 2} = F^{\times 2} \cup d F^{\times 2}$ and hence $[F^{\times} E^{\times 2} : E^{\times 2}] = [F^{\times} : F^{\times} \cap E^{\times 2}] = \frac{1}{2} [F^{\times} : F^{\times 2}]$.
Then the obvious identity $[E^{\times} : E^{\times 2}] = [E^{\times} : F^{\times} E^{\times 2}] [F^{\times} E^{\times 2} : E^{\times 2}]$ confirms the conclusion in the above corollary.
\end{remark}
From the Proposition \ref{reps of H-groups}, Lemma \ref{group structure T} and Lemma \ref{group structure E}, we deduce the following

\begin{proposition} \label{irrep of tilde T and tilde E}
\begin{enumerate}
\item Up to isomorphism, an irreducible genuine representation $\sigma$ of $\tilde{T}$ is determined by its central character $\omega_\sigma$ (a character of $\tilde{T}^2$). If $\sigma$ is an irreducible genuine representation of $\tilde{T}$ with central character $\omega_{\sigma}$, 
 $\lambda$ a character of $\tilde{Z} \tilde{T}^{2}$ which agrees with $\omega_{\sigma}$ when restricted to 
$\tilde{T}^2$, the center of $\tilde{T}$, then $\sigma \cong \Ind_{\tilde{Z} \tilde{T}^{2}}^{\tilde{T}} \lambda$.

\item  Up to isomorphism, an irreducible genuine representation of $\tilde{E}^{\times}$ is determined by its central character. If $\sigma$ is an irreducible genuine representation of $\tilde{E}^{\times}$ with central character $\omega_{\sigma}$,  $\lambda$ a character of $\tilde{F}^{\times} \tilde{E}^{\times 2}$ which agrees with $\omega_{\sigma}$
on $\tilde{E}^{\times2}$, then $\sigma \cong \Ind_{\tilde{F}^{\times} \tilde{E}^{\times 2}}^{\tilde{E}^{\times}} \lambda$.

\end{enumerate}
\end{proposition}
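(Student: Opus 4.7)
The plan is to obtain both statements as immediate consequences of Proposition \ref{reps of H-groups}, specialised to the two Heisenberg groups $\tilde{T}$ and $\tilde{E}^{\times}$ by means of Lemmas \ref{group structure T}, \ref{group structure E}, and \ref{center of tilde T}. Those lemmas do two things for us: they identify the centre of each group (namely $\tilde{T}^2$ and $\tilde{E}^{\times 2}$), and they exhibit an explicit maximal abelian subgroup containing that centre (namely $\tilde{Z}\tilde{T}^2 \subset \tilde{T}$ and $\tilde{F}^{\times}\tilde{E}^{\times 2} \subset \tilde{E}^{\times}$). Once these identifications are in place, the abstract representation theory of Heisenberg groups developed in Section 4 applies verbatim, and there is no serious obstacle to the proof.

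For part (1), I take $\Sigma_1 = \tilde{Z}\tilde{T}^2$, which by Lemma \ref{group structure T} is a maximal abelian subgroup of $\tilde{T}$ containing $Z(\tilde{T}) = \tilde{T}^2$. Let $\sigma$ be any irreducible genuine representation of $\tilde{T}$ with central character $\omega_\sigma$, and let $\lambda$ be any genuine character of $\Sigma_1$ whose restriction to $\tilde{T}^2$ is $\omega_\sigma$. Proposition \ref{reps of H-groups}(5) gives
\[
\sigma|_{\Sigma_1} \;=\; \ind_{\tilde{T}^2}^{\Sigma_1} \omega_\sigma,
\]
which by Frobenius reciprocity (or by dimension count and multiplicity freeness) is the sum of \emph{all} genuine characters of $\Sigma_1$ extending $\omega_\sigma$, each occurring with multiplicity one. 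In particular $\lambda$ occurs in $\sigma|_{\Sigma_1}$, so by Proposition \ref{reps of H-groups}(1) we get $\sigma \cong \Ind_{\Sigma_1}^{\tilde{T}} \lambda$. This simultaneously proves the explicit induction formula and the fact that $\omega_\sigma$ determines $\sigma$ up to isomorphism: two such representations with the same central character are both isomorphic to $\Ind_{\Sigma_1}^{\tilde{T}} \lambda$ for a common $\lambda$. The converse (that such an induced representation is irreducible and has the expected central character) is furnished by Proposition \ref{reps of H-groups}(2).

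Part (2) is proved by exactly the same argument with $\tilde{T}$ replaced by $\tilde{E}^{\times}$, the centre $\tilde{T}^2$ replaced by $\tilde{E}^{\times 2}$ (which is the centre of $\tilde{E}^{\times}$ by Lemma \ref{center of tilde T}), and the maximal abelian subgroup $\tilde{Z}\tilde{T}^2$ replaced by $\tilde{F}^{\times}\tilde{E}^{\times 2}$ (by Lemma \ref{group structure E}). The only point meriting any caution in either part is the use of Proposition \ref{reps of H-groups}(5), which is what guarantees that \emph{every} extension $\lambda$ of $\omega_\sigma$ to the chosen maximal abelian subgroup actually appears in $\sigma|_{\Sigma_1}$; without this one would only be able to conclude the induction formula up to an ambient conjugacy class of $\lambda$'s. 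Since that case of Proposition \ref{reps of H-groups} has already been established, no additional work is needed here.
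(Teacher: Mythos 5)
Your argument is correct and is essentially the paper's own deduction: the paper likewise obtains this proposition directly by specialising Proposition \ref{reps of H-groups} to $\tilde{T}$ and $\tilde{E}^{\times}$ via Lemmas \ref{group structure T}, \ref{group structure E} and \ref{center of tilde T}. The only cosmetic addition worth making is an explicit citation of the lemma in Section 5 showing that $\tilde{T}$ and $\tilde{E}^{\times}$ are Heisenberg groups with non-degenerate commutator form, since that is what licenses the use of part (5) of Proposition \ref{reps of H-groups}.
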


\section{Restriction of principal series representations}

In this section, we study the restriction of a genuine principal series representation of 
$\widetilde{\GL}_{2}(F)$ 
to the subgroup  $\tilde{E}^{\times}$ for $E$ a quadratic field extension of $F$.

We first recall the notion of a principal series representation.
Let $T, B$ and $N$ be respectively the group of 
 diagonal matrices, upper triangular matrices and upper triangular unipotent matrices in $G=\GL_{2}(F)$,
and $\tilde{T}, \tilde{B}$ and $\tilde{N}$ their inverse images in 
the twofold cover $\tilde{G}=\widetilde{\GL}_{2}(F)$.

From the cocycle formula (\ref{cocycle}), it is clear that $\tilde{N} \cong N \times \mu_{2}$ and we identify $N$ with $N \times \{1\}$ in $\widetilde{\GL}_{2}(F)$. 
One has $\tilde{B} = \tilde{T} N$.

Let $\tau$ be a genuine irreducible representation of $\tilde{T}$. 
Take the inflation of the representation $\tau$ to a representation of $\tilde{B}$ by the quotient map $\tilde{B} \rightarrow \tilde{B}/N \cong \tilde{T}$ and denote this by the same letter $\tau$.
The induced representation $\Ind_{\tilde{B}}^{\tilde{G}} \tau$ of $\tilde{G}$ is  called a principal series representation 
of $\widetilde{\GL}_{2}(F)$.

Since $\tilde{T}$ is a Heisenberg group, its genuine irreducible representations are determined (up to isomorphism) by its central character, i.e. a genuine character of $\tilde{T}^{2}$.
A character $\chi'$ of $F^{\times 2}$ can be considered as the restriction of a character $\chi$ of $F^\times$ with $\chi'(a^2) = \chi^2(a)$. Two characters $\chi_1$ and $\chi_2$ of $F^\times $ define the same character of $F^{\times 2}$ if and only if $\chi_1^2 = \chi_2^2$. 
Thus to a principal series representation of  $\widetilde{\GL}_{2}(F)$, there is a naturally associated principal series
representation $\chi_1^2 \times \chi_2^2$ of $\GL_2(F)$. 

It is a theorem due to Moen \cite{Moe89} that a principal series representation of  $\widetilde{\GL}_{2}(F)$ 
is reducible if and only if 
 the  associated principal series
representation $\chi_1^2 \times \chi_2^2$ of $\GL_2(F)$ is reducible.

We now study the restriction of $\pi$ to the subgroup $\tilde{E}^{\times}$. 

\begin{lemma}
Let $\pi= \Ind_{\tilde{B}}^{\tilde{G}} \tau$ be a genuine principal series representation of $\widetilde{\GL}_{2}(F)$ with central character $\omega_\pi$ (a character of $\tilde{F}^{\times 2}
$), i.e., $\omega_\tau|_{\tilde{F}^{\times 2}} = \omega_\pi$. 
Then the restriction of $\pi$ to $\tilde{E}^{\times}$ is 
\[
\pi|_{\tilde{E}^{\times}} \cong \Ind_{\tilde{F}^{\times 2}}^{\tilde{E}^{\times}} (\omega_{\tau}) \cong \bigoplus_{\sigma \in \Irrep_{\omega_{\pi}} (\tilde{E}^{\times})} (\dim \sigma) \sigma,
\]
where $\Irrep_{\omega_{\pi}}(\tilde{E}^{\times})$ denotes the set of isomorphism classes of irreducible genuine representations $\sigma$ of $\tilde{E}^{\times}$ such that $\omega_{\sigma}|_{\tilde{F}^{\times 2}} = \omega_{\pi}$. (Recall that by Lemma \ref{center of tilde T}, center of $\tilde{E}^\times$ is $\tilde{E}^{\times 2}$).
\end{lemma}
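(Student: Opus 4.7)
My plan is to reduce the problem to a computation on the split torus via Mackey theory, then use the Heisenberg structure from Section~4 to restrict $\tau$ to $\tilde{F}^\times$, and finally invoke Frobenius reciprocity for the decomposition into irreducibles.

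The first step exploits the geometric fact that the non-split torus $E^\times$ fixes no $F$-rational line in $F^2$, so $E^\times$ acts transitively on $\mathbb{P}^1(F) = B\backslash G$, with the stabilizer of $[1:0]$ being exactly $E^\times \cap B = F^\times$ (the center of $G$, sitting in $E^\times$ as the subfield $F \subset E$ and in $G$ as scalar matrices). Thus $G = B\cdot E^\times$, and passing to the twofold cover gives $\tilde{G} = \tilde{B}\cdot \tilde{E}^\times$ with $\tilde{B}\cap \tilde{E}^\times = \tilde{F}^\times$. Since $G/B = \mathbb{P}^1(F)$ is compact and $\delta_B^{1/2}$ is trivial on the center, the single-double-coset Mackey formula yields
\[
\pi|_{\tilde{E}^\times} \;\cong\; \Ind_{\tilde{F}^\times}^{\tilde{E}^\times}\bigl(\tau|_{\tilde{F}^\times}\bigr).
\]

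Next I would compute $\tau|_{\tilde{F}^\times}$. The subgroup $\tilde{F}^\times \subset \tilde{T}$ is abelian, and combined with the center $Z(\tilde{T}) = \tilde{T}^2$ it generates the maximal abelian subgroup $\tilde{Z}\tilde{T}^2$ of the Heisenberg group $\tilde{T}$ (Lemma~\ref{group structure T}). Applying Corollary~\ref{restriction to max-abelian} with $\Sigma_2 = \tilde{F}^\times$, and using $\tilde{F}^\times \cap \tilde{T}^2 = \tilde{F}^{\times 2}$, gives
\[
\tau|_{\tilde{F}^\times} \;\cong\; \ind_{\tilde{F}^{\times 2}}^{\tilde{F}^\times}\bigl(\omega_\tau|_{\tilde{F}^{\times 2}}\bigr) \;=\; \ind_{\tilde{F}^{\times 2}}^{\tilde{F}^\times}\omega_\pi.
\]
Induction in stages then upgrades the Mackey identity to the first claimed isomorphism $\pi|_{\tilde{E}^\times} \cong \Ind_{\tilde{F}^{\times 2}}^{\tilde{E}^\times} \omega_\pi$.

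For the second isomorphism I would invoke Frobenius reciprocity. Since $\tilde{F}^{\times 2}$ lies in the center $\tilde{E}^{\times 2}$ of $\tilde{E}^\times$ (Lemma~\ref{center of tilde T}), every irreducible genuine $\sigma$ of $\tilde{E}^\times$ satisfies $\sigma|_{\tilde{F}^{\times 2}} = (\dim \sigma)\cdot\omega_\sigma|_{\tilde{F}^{\times 2}}$, so
\[
\Hom_{\tilde{E}^\times}\bigl(\sigma,\,\Ind_{\tilde{F}^{\times 2}}^{\tilde{E}^\times} \omega_\pi\bigr) \;\cong\; \Hom_{\tilde{F}^{\times 2}}\bigl(\omega_\pi,\,\sigma|_{\tilde{F}^{\times 2}}\bigr)
\]
has dimension $\dim \sigma$ precisely when $\sigma \in \Irrep_{\omega_\pi}(\tilde{E}^\times)$, and is zero otherwise. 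This yields the desired direct sum decomposition.

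The step I expect to be the most delicate is the first one: although the geometric picture is transparent, one must verify that the Mackey formula for smooth representations of $p$-adic groups passes faithfully to the twofold cover, and in particular that $\tilde{B}\cap \tilde{E}^\times$ is the \emph{full} preimage $\tilde{F}^\times$ of $F^\times$ (rather than a proper subgroup, which would change the induction). Once this reduction is in place, the remaining two steps are essentially routine applications of the Heisenberg machinery of Section~4 together with Frobenius reciprocity.
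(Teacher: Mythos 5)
Your proposal is correct and follows essentially the same route as the paper's proof: Mackey theory for the single $\tilde{E}^{\times}$-orbit on $\tilde{B}\backslash\tilde{G}$ with $\tilde{B}\cap\tilde{E}^{\times}=\tilde{F}^{\times}$, then Corollary \ref{restriction to max-abelian} to get $\tau|_{\tilde{F}^{\times}}\cong \ind_{\tilde{F}^{\times 2}}^{\tilde{F}^{\times}}\omega_{\pi}$, induction in stages, and Frobenius reciprocity (using that $\tilde{E}^{\times}$ is compact modulo $\tilde{F}^{\times 2}$) for the multiplicity-$\dim\sigma$ decomposition. The extra care you take with $\tilde{B}\cap\tilde{E}^{\times}$ being the full preimage and with $\sigma|_{\tilde{F}^{\times 2}}$ being $\dim\sigma$ copies of its central character is consistent with, and slightly more explicit than, the paper's argument.
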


\begin{proof}
Note that the natural (right) action of $\tilde{E}^{\times}$ on $\tilde{B} \backslash \tilde{G}= \mathbb{P}^{1}(F)$ is transitive, i.e. there is only one orbit.
By Mackey theory, $\pi|_{\tilde{E}^{\times}} \cong \Ind_{\tilde{B} \cap \tilde{E}^{\times}}^{\tilde{E}^{\times}} (\tau|_{\tilde{B} \cap \tilde{E}^{\times}})$.
Since $\tilde{B} \cap \tilde{E}^{\times} = \tilde{F}^\times$, we have 
$\pi|_{\tilde{E}^{\times}} \cong \Ind_{\tilde{F}^\times}^{\tilde{E}^{\times}} (\tau|_{\tilde{F}^\times})$.
From Corollary \ref{restriction to max-abelian}, 
\begin{center}
$\tau|_{\tilde{F}^\times} = \Ind_{\tilde{F}^\times \cap Z(\tilde{T})}^{\tilde{F}^\times} \left( \omega_{\tau}|_{\tilde{F}^\times \cap Z(\tilde{T})} \right) = \Ind_{\tilde{F}^{\times2}}^{\tilde{F}^\times} (\omega_{\pi})$.
\end{center}
Therefore, $\pi|_{\tilde{E}^{\times}} \cong \Ind_{\tilde{F}^{\times 2}}^{\tilde{E}^{\times}} (\omega_{\pi})$.
Since $\tilde{E}^{\times}$ is a group which is compact modulo the center, the 2nd isomorphism in the 
assertion of  the lemma follows from Frobenius reciprocity.
\end{proof}

\begin{corollary} \label{sum of multiplicities} Let $\sigma$ be an irreducible genuine representation of $\tilde{E}^{\times}$.
\begin{enumerate}
\item Let $\pi = \Ind_{\tilde{B}}^{\tilde{G}} \tau$ be an irreducible genuine principal series such that $\omega_{\sigma}|_{\tilde{F}^{\times 2}} = \omega_{\pi}$. Then
\[
\dim \Hom_{\tilde{E}^{\times}}(\pi, \sigma) = \dim \sigma = [E^{\times} : F^{\times} E^{\times 2}].
\]
\item
 Let $\pi$ and $\pi'$ be the two sub-quotients of a genuine reducible principal series  representation $\Ind_{\tilde{B}}^{\tilde{G}} \tau$ such that $\omega_{\sigma}|_{\tilde{F}^{\times 2}} = \omega_{\pi}$. Then for a quadratic extension $E$ of $F$,
\[
\dim \Hom_{\tilde{E}^{\times}}(\pi, \sigma) + \dim \Hom_{\tilde{E}^{\times}}(\pi', \sigma) = [E^{\times} : F^{\times} E^{\times 2}].
\]
\end{enumerate}
\end{corollary}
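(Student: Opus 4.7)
The plan is to deduce both parts directly from the preceding lemma, which identifies
\[
(\Ind_{\tilde{B}}^{\tilde{G}} \tau)|_{\tilde{E}^{\times}} \cong \Ind_{\tilde{F}^{\times 2}}^{\tilde{E}^{\times}}(\omega_{\pi}) \cong \bigoplus_{\sigma' \in \Irrep_{\omega_{\pi}}(\tilde{E}^{\times})} (\dim \sigma')\,\sigma',
\]
and whose proof nowhere uses irreducibility of the induced representation. Part (1) will then be immediate: when $\pi = \Ind_{\tilde{B}}^{\tilde{G}} \tau$ is itself irreducible, the multiplicity of $\sigma$ in $\pi|_{\tilde{E}^{\times}}$ is exactly $\dim \sigma$. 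To rewrite this dimension in the form asserted, I would invoke Proposition \ref{irrep of tilde T and tilde E}(2), which realises any such $\sigma$ as $\Ind_{\tilde{F}^{\times}\tilde{E}^{\times 2}}^{\tilde{E}^{\times}}\lambda$, so that $\dim \sigma = [\tilde{E}^{\times} : \tilde{F}^{\times}\tilde{E}^{\times 2}] = [E^{\times} : F^{\times} E^{\times 2}]$; equivalently one could just cite Corollary \ref{dim}.

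For part (2), I would apply the same lemma to the \emph{reducible} $\Pi := \Ind_{\tilde{B}}^{\tilde{G}}\tau$, so that
\[
\Pi|_{\tilde{E}^{\times}} \cong \bigoplus_{\sigma' \in \Irrep_{\omega_{\pi}}(\tilde{E}^{\times})} (\dim \sigma')\,\sigma',
\]
a semisimple representation of $\tilde{E}^{\times}$. The Jordan--H\"older constituents $\pi$ and $\pi'$ of $\Pi$ sit in a short exact sequence $0 \to \pi \to \Pi \to \pi' \to 0$ (up to relabeling). Since restriction of smooth representations to $\tilde{E}^{\times}$ is exact and $\Pi|_{\tilde{E}^{\times}}$ is semisimple, the subrepresentation $\pi|_{\tilde{E}^{\times}}$ is automatically a direct summand, so the sequence splits upon restriction, yielding $\Pi|_{\tilde{E}^{\times}} \cong \pi|_{\tilde{E}^{\times}} \oplus \pi'|_{\tilde{E}^{\times}}$. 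Applying $\Hom_{\tilde{E}^{\times}}(-,\sigma)$ to this splitting and substituting the total multiplicity $[E^{\times}:F^{\times}E^{\times 2}]$ computed as in part (1) then delivers the claimed additive identity.

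The only point that requires care is the semisimplicity of $\Pi|_{\tilde{E}^{\times}}$, and it is this which makes the whole argument essentially formal rather than computational. Semisimplicity is handed to us by the explicit sum in the preceding lemma together with the fact that $\tilde{E}^{\times}/Z(\tilde{E}^{\times})$ is finite (the Heisenberg structure from Section 4), so that every genuine irreducible representation of $\tilde{E}^{\times}$ is finite-dimensional and every subrepresentation of a semisimple one splits off as a summand; no additional branching computation is needed.
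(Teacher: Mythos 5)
Your argument is correct and is essentially the deduction the paper intends: the corollary is stated as an immediate consequence of the preceding restriction lemma (whose Mackey-theoretic proof indeed never uses irreducibility of $\Ind_{\tilde{B}}^{\tilde{G}}\tau$), combined with $\dim\sigma=[E^{\times}:F^{\times}E^{\times 2}]$ from Proposition \ref{irrep of tilde T and tilde E}(2) and Corollary \ref{dim}. Your explicit justification of additivity in part (2) via semisimplicity of the restricted full principal series (so the constituent-subrepresentation splits off and no $\Ext$ obstruction can arise) is exactly the point that makes the corollary immediate, and it is correctly handled.
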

\section{Restriction to split torus}
In this section, we study the restriction of an irreducible admissible genuine representation of $\widetilde{\GL}_{2}(F)$ to $\tilde{T}$. 
We will be utilizing the Kirillov model for the representations of $\widetilde{\GL}_{2}(F)$ \cite[~Section~3]{GPS80}.

Let $\psi : N \cong F \rightarrow \C^{\times}$ be a non-trivial character.
Recall that the Kirillov model is an injective map $\mathtt{K} :\pi \rightarrow C^{\infty}(F^{\times}, \pi_{N, \psi})$ such that 
the action of $\tilde{B}$ on $\pi$ is explicitly realized on the image of $\mathtt{K}$.
The subspace $\mathcal{S}(F^{\times}, \pi_{N, \psi})$ consisting of the functions which have compact support is contained in $\mathtt{K}(\pi)$, the image of $\mathtt{K}$,  which is $\tilde{B}$-stable.
Moreover, as a $\tilde{B}$-module we have the following short exact sequence
\[
0 \rightarrow \mathcal{S}(F^{\times}, \pi_{N, \psi}) \rightarrow \mathtt{K}(\pi) \rightarrow \pi_{N} \rightarrow 0.
\]
Notice that $\pi_{N, \psi}$ is a $\tilde{Z}$-module on which 
$N$ acts by the character $\psi$.

\begin{proposition}
Let $\pi$ be an irreducible admissible genuine representation of $\widetilde{\GL}_{2}(F)$.
\begin{enumerate}
\item As a $\tilde{B}$-module, 
\[
\mathcal{S}(F^{\times}, \pi_{N, \psi}) \cong \ind_{\tilde{Z}N}^{\tilde{B}} (\pi_{N, \psi}).
\]
\item 
As a $\tilde{T}$-module,
\[
\mathcal{S}(F^{\times}, \pi_{N, \psi})  \cong \ind_{\tilde{Z}}^{\tilde{T}} (\pi_{N, \psi}).
\]
\end{enumerate}
\end{proposition}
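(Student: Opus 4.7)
\medskip

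\noindent\textbf{Proof proposal.}

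The plan is to exploit the explicit description of the $\tilde{B}$-action in the Kirillov model together with basic induction/Mackey yoga. First I would recall how $\tilde{B} = \tilde{T}N$ acts on a function $f \in \mathcal{S}(F^{\times}, \pi_{N,\psi})$: the unipotent radical $N \cong F$ acts by $(\pi(n(x))f)(y) = \psi(xy)f(y)$; the center $\tilde{Z}$ acts by the central character $\omega_\pi$; and the torus element lifting $\mathrm{diag}(a,1)$ acts by translation $(\pi(\tilde{\mathrm{diag}}(a,1))f)(y) = f(ay)$ (in the covered setting the diagonal torus element acts on the values in $\pi_{N,\psi}$ through its $\tilde{Z}$-action combined with translation). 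In particular $\tilde{Z}N$ acts on the value $f(1) \in \pi_{N,\psi}$ by the genuine character $\omega_\pi \otimes \psi$ that defines $\pi_{N,\psi}$ as a $\tilde{Z}N$-module.

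For part (1), I would identify $\tilde{B}/\tilde{Z}N$ with $F^\times$ via $\tilde{B}/\tilde{Z}N \cong \tilde{T}/\tilde{Z}$ (using $\tilde{B}=\tilde{T}N$) and then $\tilde{T}/\tilde{Z} \cong F^\times$ via $\mathrm{diag}(a,1) \mapsto a$ (note that $\mu_2 \subset \tilde{Z}$, so the quotient $\tilde{T}/\tilde{Z}$ is honestly $T/Z$). Then I would define
\[
\Phi : \ind_{\tilde{Z}N}^{\tilde{B}} \pi_{N,\psi} \longrightarrow \mathcal{S}(F^\times, \pi_{N,\psi}), \qquad \Phi(F)(a) := F\bigl(\mathrm{diag}(a,1)\bigr).
\]
The $\tilde{Z}N$-equivariance of $F$ coupled with the transversality $\tilde{B} = \tilde{Z}N \cdot \{\mathrm{diag}(a,1) : a \in F^\times\}$ makes $\Phi$ a linear bijection onto compactly supported $\pi_{N,\psi}$-valued functions on $F^\times$. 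The remaining verification is that $\Phi$ intertwines the $\tilde{B}$-actions; the only nontrivial case is the torus, and this is a direct comparison between the translation action on $F^\times \cong \tilde{B}/\tilde{Z}N$ and the Kirillov torus action recalled above. The cocycle corrections from the cover are absorbed by the $\tilde{Z}$-equivariance built into the values in $\pi_{N,\psi}$.

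For part (2), the cleanest route is Mackey restriction: since $\tilde{B} = \tilde{T} \cdot \tilde{Z}N$ as a product of subgroups (indeed $\tilde{Z} \subset \tilde{T}$ and $\tilde{B} = \tilde{T}N$), the double coset space $\tilde{T}\backslash \tilde{B}/\tilde{Z}N$ reduces to a single point represented by $1$, and $\tilde{T} \cap \tilde{Z}N = \tilde{Z}$ because $\tilde{T} \cap N = \{1\}$. Mackey's formula then gives
\[
\bigl(\ind_{\tilde{Z}N}^{\tilde{B}} \pi_{N,\psi}\bigr)\big|_{\tilde{T}} \;\cong\; \ind_{\tilde{Z}}^{\tilde{T}}\bigl(\pi_{N,\psi}\big|_{\tilde{Z}}\bigr),
\]
which combined with part (1) yields $\mathcal{S}(F^\times, \pi_{N,\psi}) \cong \ind_{\tilde{Z}}^{\tilde{T}} \pi_{N,\psi}$ as $\tilde{T}$-modules.

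The step I expect to require the most care is the verification of $\tilde{B}$-equivariance of $\Phi$ in (1), because the torus element acts in the Kirillov model via a formula that must be reconciled with the cocycle $\beta$ on $\tilde{T}$: here one has to check that the twist by $\mu_\psi$ appearing through the action of $\tilde{Z}$ on the values (i.e.\ on $\pi_{N,\psi}$) matches the Kirillov transformation rule. Once the torus compatibility is established on the section $a \mapsto \mathrm{diag}(a,1)$, the extension to all of $\tilde{T}$ is automatic via $\tilde{Z}$-equivariance, and (2) then follows purely formally as above.
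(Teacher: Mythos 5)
Your proposal is correct and follows essentially the same route as the paper, which simply invokes the Kirillov theory of \cite{GPS80} for part (1) and Mackey theory for part (2); you have just written out the standard details (the evaluation map on the section $a \mapsto (\mathrm{diag}(a,1),1)$ and the single-double-coset Mackey computation with $\tilde{T} \cap \tilde{Z}N = \tilde{Z}$). The only slight imprecision is your phrase that $\tilde{Z}$ acts on $\pi_{N,\psi}$ by ``the central character $\omega_\pi$'': in general $\pi_{N,\psi}$ is a $\tilde{Z}$-module of dimension up to $[F^{\times}:F^{\times 2}]$, not a single character, but nothing in your argument actually uses scalarity, so the proof stands as written.
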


\begin{proof}
The first part is part of Kirillov theory, see \cite[~Section~3]{GPS80}., and the second is a consequence of Mackey theory.
\end{proof}

\begin{corollary}
Let $\pi$ be an irreducible genuine supercuspidal representation of $\widetilde{\GL}_{2}(F)$ and $\sigma$ an irreducible genuine representation of $\tilde{T}$ with $\omega_{\pi} = \omega_{\sigma}|_{\tilde{Z}^2}$. Then
\[
\Hom_{\tilde{T}} (\pi, \sigma) \cong \Hom_{\tilde{Z}} (\pi_{N, \psi}, \sigma) \cong \Hom_{\tilde{Z}^2}(\pi_{N,\psi}, \omega_\sigma).
\]
In particular,
\[
\dim \Hom_{\tilde{T}} (\pi, \sigma) = \dim \pi_{N, \psi} = \mid \Omega(\pi, \psi) \mid
\]
which is independent of the choice of the additive character $\psi$ of $F$, and where $$\Omega(\pi, \psi) = \{\omega: \tilde{F}^\times \rightarrow \C^\times \mid \pi_{N,\psi} {\rm~~contains~~} \omega_\pi \}.$$ 
\end{corollary}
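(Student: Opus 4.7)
The plan is to chain together the Kirillov-model description of $\pi|_{\tilde{T}}$ with two applications of Frobenius reciprocity. Since $\pi$ is supercuspidal, $\pi_{N} = 0$, and the short exact sequence for the Kirillov model collapses to an isomorphism $\mathtt{K}(\pi) = \mathcal{S}(F^{\times}, \pi_{N,\psi})$. Combined with part (2) of the previous proposition, this gives $\pi|_{\tilde{T}} \cong \ind_{\tilde{Z}}^{\tilde{T}} \pi_{N,\psi}$ as $\tilde{T}$-modules. Frobenius reciprocity then yields the first isomorphism:
\[
\Hom_{\tilde{T}}(\pi, \sigma) \cong \Hom_{\tilde{T}}\bigl(\ind_{\tilde{Z}}^{\tilde{T}} \pi_{N,\psi},\, \sigma\bigr) \cong \Hom_{\tilde{Z}}\bigl(\pi_{N,\psi},\, \sigma|_{\tilde{Z}}\bigr).
\]

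For the second isomorphism, I apply Corollary \ref{restriction to max-abelian} to the Heisenberg group $\tilde{T}$: by Lemma \ref{group structure T}, $\tilde{Z}\tilde{T}^{2}$ is maximal abelian and $Z(\tilde{T}) = \tilde{T}^{2}$, so $\tilde{Z} \cap Z(\tilde{T}) = \tilde{Z}^{2}$. Hence $\sigma|_{\tilde{Z}} \cong \ind_{\tilde{Z}^{2}}^{\tilde{Z}} \omega_{\sigma}$, and a second application of Frobenius reciprocity produces
\[
\Hom_{\tilde{Z}}\bigl(\pi_{N,\psi},\, \sigma|_{\tilde{Z}}\bigr) \cong \Hom_{\tilde{Z}^{2}}\bigl(\pi_{N,\psi},\, \omega_{\sigma}\bigr).
\]

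To extract the dimension, observe that $\tilde{Z}^{2} = \tilde{F}^{\times 2}$ is the center of $\widetilde{\GL}_{2}(F)$ and so acts on the Jacquet module $\pi_{N,\psi}$ through the central character $\omega_{\pi}$. The hypothesis $\omega_{\pi} = \omega_{\sigma}|_{\tilde{Z}^{2}}$ then forces any linear functional on $\pi_{N,\psi}$ to be automatically $\tilde{Z}^{2}$-equivariant with target $\omega_{\sigma}$, so $\Hom_{\tilde{Z}^{2}}(\pi_{N,\psi}, \omega_{\sigma}) \cong \pi_{N,\psi}^{*}$, of dimension $\dim \pi_{N,\psi}$ (finite since $\pi$ is supercuspidal). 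Because $\tilde{Z} = \tilde{F}^{\times}$ is abelian, $\pi_{N,\psi}$ splits as a direct sum of $\tilde{Z}$-characters, and counting these with multiplicity gives $\dim \pi_{N,\psi} = |\Omega(\pi,\psi)|$ by the definition of $\Omega$. Finally, independence of $\psi$ follows because replacing $\psi$ by $\psi_{a}(x) = \psi(ax)$ and conjugating by (a lift of) $\mathrm{diag}(a,1) \in T$ sets up a linear isomorphism $\pi_{N,\psi} \cong \pi_{N,\psi_{a}}$, preserving dimension. The argument is almost entirely formal; the only point requiring genuine attention is tracking that the central-character compatibility $\omega_{\pi} = \omega_{\sigma}|_{\tilde{Z}^{2}}$ is precisely what forces the final Hom to compute the full linear dual of $\pi_{N,\psi}$.
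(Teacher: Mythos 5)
Your chain of isomorphisms is exactly the paper's argument: supercuspidality gives $\pi|_{\tilde{T}} \cong \ind_{\tilde{Z}}^{\tilde{T}}(\pi_{N,\psi})$ via the Kirillov model, Corollary \ref{restriction to max-abelian} gives $\sigma|_{\tilde{Z}} \cong \Ind_{\tilde{Z}^{2}}^{\tilde{Z}}(\omega_{\sigma})$, and two applications of Frobenius reciprocity produce $\Hom_{\tilde{T}}(\pi,\sigma) \cong \Hom_{\tilde{Z}}(\pi_{N,\psi},\sigma) \cong \Hom_{\tilde{Z}^{2}}(\pi_{N,\psi},\omega_{\sigma})$, and the central-character hypothesis identifies the last space with the full dual of $\pi_{N,\psi}$. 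Up to this point your proof and the paper's coincide.

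The gap is in the final equality $\dim \pi_{N,\psi} = |\Omega(\pi,\psi)|$, which you dismiss as definitional. By its definition, $\Omega(\pi,\psi)$ is a \emph{set} of characters of $\tilde{F}^{\times}$ occurring in $\pi_{N,\psi}$; its cardinality carries no multiplicities, so your phrase ``counting these with multiplicity'' proves only $\dim \pi_{N,\psi} \geq |\Omega(\pi,\psi)|$ (equivalently, equality with the sum of multiplicities, not with the number of distinct characters). The missing input is that every genuine character of $\tilde{Z}$ occurs in $\pi_{N,\psi}$ with multiplicity at most one --- the uniqueness of $(\mu,\psi)$-Whittaker functionals for $\widetilde{\GL}_{2}(F)$, i.e.\ Theorem 4.1 of \cite{GHPS79}, which is a genuine theorem about the metaplectic group (proved via Gelfand pairs/Weil representation methods), not a formal consequence of the setup; the paper invokes it at exactly this point, together with the observation that every character in $\Omega(\pi,\psi)$ restricts on $\tilde{Z}^{2}$ to $\omega_{\pi}=\omega_{\sigma}|_{\tilde{Z}^{2}}$. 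A smaller wording issue: the decomposition of $\pi_{N,\psi}$ into $\tilde{Z}$-characters does not follow merely ``because $\tilde{Z}$ is abelian'' (finite-dimensional smooth representations of $F^{\times}$ need not be semisimple); it holds here because $\tilde{Z}^{2}$ acts by the scalar character $\omega_{\pi}$ and $\tilde{Z}/\tilde{Z}^{2}$ is finite, a point you essentially have in hand but should state. With the citation of \cite{GHPS79} inserted, your argument matches the paper's.
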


\begin{proof}
Since $\pi_{N} =0$, we have $\pi|_{\tilde{T}} \cong \mathcal{S}(F^{\times}, \pi_{N, \psi})|_{\tilde{T}} \cong \ind_{\tilde{Z}}^{\tilde{T}} (\pi_{N, \psi})$.  From Corollary \ref{restriction to max-abelian}, 
$$\sigma|_{\tilde{Z}} = {\rm Ind}_{\tilde{Z}^2}^{\tilde{Z}} (\omega_\sigma).$$

By Frobenius reciprocity,
$$\Hom_{\tilde{T}} (\pi, \sigma) \cong \Hom_{\tilde{Z}} (\pi_{N, \psi}, \sigma) \cong \Hom_{\tilde{Z}^2}(\pi_{N,\psi}, \omega_\sigma),$$
which proves the first isomorphism contained in the  corollary.

The second isomorphism contained in the  corollary  follows since the multiplicity with which any 
genuine character of $\tilde{Z}$ is contained in $\pi_{N, \psi}$ is exactly one \cite[Theorem~4.1]{GHPS79}, and every character in $\Omega(\pi, \psi)$ restricted to $\tilde{Z}^{2}$ naturally equals  $\omega_\pi= \omega_{\sigma}|_{\tilde{Z}^{2}}$. \end{proof}

\begin{corollary} \label{pi and pi' for split torus}
Let $\pi$ be an irreducible admissible genuine supercuspidal representation of $\widetilde{\GL}_{2}(F)$ and $\psi$ a non-trivial additive character of $F$.
Let $\pi'$ be another finite length  genuine supercuspidal representation of $\widetilde{\GL}_{2}(F)$ with the same central character as  ${\pi}$,  satisfying
\[
\Omega(\pi, \psi)  \sqcup \Omega(\pi', \psi) = \Omega(\omega_{\pi}),
\] a disjoint union of sets, i.e., any character $\chi$ of $\tilde{F}^\times$ 
whose restriction to $\tilde{F}^{\times 2}$ is $\omega_\pi$ appears in
exactly one of $\pi_{N,\psi}$ or $\pi'_{N,\psi}$. (As recalled earlier, 
by \cite[Theorem~4.1]{GHPS79} every character of $\tilde{F}^\times$ appearing in $\pi_{N,\psi}$ appears with multiplicity at most 1.)
 
Then we have 
\[
\pi|_{\tilde{T}} \oplus \pi'|_{\tilde{T}} \cong \bigoplus_{\sigma \in \Irrep_{\omega_{\pi}}(\tilde{T})}(\dim \sigma)  \sigma 
\]
\end{corollary}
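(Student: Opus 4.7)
The plan is to combine the multiplicity formula from the preceding corollary with the hypothesis on $\Omega(\pi,\psi) \sqcup \Omega(\pi',\psi)$ and a short computation of $\dim \sigma$, and then to upgrade the resulting multiplicity match to an honest isomorphism of $\tilde{T}$-modules.

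First, I apply the preceding corollary to $\pi$ and to each irreducible constituent of $\pi'$. For every $\sigma \in \Irrep_{\omega_\pi}(\tilde{T})$ this yields
\[
\dim \Hom_{\tilde{T}}(\pi,\sigma) = |\Omega(\pi,\psi)|, \qquad \dim \Hom_{\tilde{T}}(\pi',\sigma) = |\Omega(\pi',\psi)|,
\]
and adding these identities and invoking the disjoint-union hypothesis produces
\[
\dim \Hom_{\tilde{T}}(\pi \oplus \pi', \sigma) = |\Omega(\pi,\psi)| + |\Omega(\pi',\psi)| = |\Omega(\omega_\pi)|.
\]

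Next, I would identify $\dim \sigma$ with $|\Omega(\omega_\pi)|$. By Lemma \ref{group structure T}, $\tilde{Z}\tilde{T}^2$ is a maximal abelian subgroup of the Heisenberg group $\tilde{T}$, and the intersection $\tilde{Z} \cap Z(\tilde{T}) = \tilde{Z} \cap \tilde{T}^2$ is exactly $\tilde{F}^{\times 2}$. Applying Corollary \ref{restriction to max-abelian} with $\Sigma_2 = \tilde{Z}$ to an irreducible genuine $\sigma$ whose central character restricts to $\omega_\pi$, we obtain
\[
\sigma|_{\tilde{Z}} \cong \ind_{\tilde{F}^{\times 2}}^{\tilde{F}^\times} \omega_\pi,
\]
so $\dim \sigma = [\tilde{F}^\times : \tilde{F}^{\times 2}]$, which is precisely the number of characters of $\tilde{F}^\times$ extending $\omega_\pi$, i.e.\ $|\Omega(\omega_\pi)|$. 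Hence each $\sigma \in \Irrep_{\omega_\pi}(\tilde{T})$ occurs in $\pi|_{\tilde{T}} \oplus \pi'|_{\tilde{T}}$ with multiplicity $\dim \sigma$, and since both $\pi$ and $\pi'$ have central character $\omega_\pi$ on the center $\tilde{F}^{\times 2}$ of $\widetilde{\GL}_2(F)$, only such $\sigma$ can occur.

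Finally, to promote this multiplicity match to an actual isomorphism of $\tilde{T}$-modules, I would rely on the explicit Kirillov description $\pi|_{\tilde{T}} \cong \ind_{\tilde{Z}}^{\tilde{T}} \pi_{N,\psi}$ from Proposition 7.1, together with the fact (\cite[Theorem 4.1]{GHPS79}) that $\pi_{N,\psi}$ decomposes as a direct sum of $|\Omega(\pi,\psi)|$ distinct characters of $\tilde{Z}$, and similarly for $\pi'$. The resulting semisimplicity of the two induced modules, combined with the multiplicity equality established in the previous paragraph, forces the claimed isomorphism. The only mildly delicate point, and the most likely obstacle in a careful write-up, is precisely this semisimplicity step; the multiplicity computation itself is a formal consequence of the preceding corollary and the structure theory of the Heisenberg group $\tilde{T}$.
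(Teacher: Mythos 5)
Your first two paragraphs are exactly the argument the paper has in mind: the corollary is stated without proof because it follows at once from the preceding corollary, $\dim \Hom_{\tilde{T}}(\pi,\sigma)=|\Omega(\pi,\psi)|$ and $\dim \Hom_{\tilde{T}}(\pi',\sigma)=|\Omega(\pi',\psi)|$ (applied to each supercuspidal constituent of $\pi'$), together with the hypothesis $\Omega(\pi,\psi)\sqcup\Omega(\pi',\psi)=\Omega(\omega_\pi)$ and the identification $\dim\sigma=[\tilde{F}^\times:\tilde{F}^{\times 2}]=|\Omega(\omega_\pi)|$, which you correctly extract from Lemma \ref{group structure T} and Corollary \ref{restriction to max-abelian}. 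So the multiplicity statement $\dim\Hom_{\tilde{T}}(\pi\oplus\pi',\sigma)=\dim\sigma$ for every $\sigma\in\Irrep_{\omega_\pi}(\tilde{T})$ is established, and by the same approach as the paper.

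The gap is in your last paragraph. The semisimplicity you hope for is not just ``delicate''; it is false. For supercuspidal $\pi$ one has $\pi|_{\tilde{T}}\cong\mathcal{S}(F^{\times},\pi_{N,\psi})\cong\ind_{\tilde{Z}}^{\tilde{T}}(\pi_{N,\psi})$, and since $\tilde{T}/\tilde{Z}\cong F^{\times}$ is not compact, this module has \emph{no} irreducible $\tilde{T}$-submodules at all: an irreducible genuine representation of $\tilde{T}$ is finite dimensional, whereas a nonzero finite-dimensional $\tilde{T}$-stable subspace of compactly supported $\pi_{N,\psi}$-valued functions on $F^{\times}$ cannot exist, because the torus action dilates the variable and pushes supports off to infinity (exactly as $C_c^{\infty}(F^{\times})$ under translation by $F^{\times}$ contains no character). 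The fact from \cite{GHPS79} that $\pi_{N,\psi}$ is a multiplicity-free sum of characters of $\tilde{Z}$ gives semisimplicity of the \emph{inducing} $\tilde{Z}$-module, not of the induced $\tilde{T}$-module, so no honest isomorphism of $\tilde{T}$-modules with $\bigoplus_{\sigma}(\dim\sigma)\sigma$ can be forced this way (indeed, read literally it fails: the right-hand side is a sum of irreducibles, the left-hand side has none as submodules; the irreducibles occur only as quotients). The isomorphism in the corollary is therefore to be understood in the same sense as the quoted ``equality'' of Theorem \ref{main theorem}, namely as the equality of multiplicities $\dim\Hom_{\tilde{T}}(\pi\oplus\pi',\sigma)=\dim\sigma$ for all $\sigma\in\Irrep_{\omega_{\pi}}(\tilde{T})$ — which is precisely what your first two paragraphs prove, and you should stop there rather than attempt the upgrade.
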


For a principal series representations $\Ind_{\tilde{B}}^{\tilde{G}} \tau$ 
where we use un-normalized induction, instead of using the Kirillov theory, we directly use the action of $\widetilde{T}$
on the principal series representation by the geometeric nature of this action, and by Mackey theory get the following
exact sequence of $\tilde{T}$-modules: 
\begin{equation} \label{ses for ps resticted to tilde T}
0 \rightarrow \ind_{\tilde{Z}}^{\tilde{T}} (\tau|_{\tilde{Z}})  
\rightarrow \pi \rightarrow \tau + \tau^\omega \rightarrow 0.
\end{equation}
For an irreducible genuine representation $\sigma$ of $\tilde{T}$, the functor $\Hom_{\tilde{T}} (-, \sigma)$ when applied to the short exact sequence in (\ref{ses for ps resticted to tilde T}) results in the following long exact sequence:
\[
0 \rightarrow \Hom_{\tilde{T}} (\tau + \tau^{w}, \sigma) \rightarrow \Hom_{\tilde{T}} (\pi, \sigma) \rightarrow \Hom_{\tilde{T}} (\ind_{\tilde{Z}}^{\tilde{T}} (\tau|_{\tilde{Z}}), \sigma) \rightarrow \Ext_{\tilde{T}}^{1} (\tau + \tau^{w}, \sigma) 
\]

From the Lemma \ref{extension lemma for tilde T} below, it follows that all representations of $\tilde{T}$ except $\tau $ and $\tau^\omega$ appear with the multiplicity
with which it appears in $\ind_{\tilde{Z}}^{\tilde{T}} (\tau|_{\tilde{Z}})  $ which is
\[
\dim \Hom_{\tilde{T}} (\pi, \sigma) = [F^{\times} : F^{\times 2}].
\]

On the other hand, if $\pi$ is an irreducible principal series, it can also be expressed as 
$\Ind_{\tilde{B}}^{\tilde{G}} (\delta \cdot \tau^\omega)$ (as un-normalized induction). 
This realization of the principal series $\pi$ gives us the following long exact sequence of $\tilde{T}$-modules:
\begin{equation} 
0 \rightarrow \ind_{\tilde{Z}}^{\tilde{T}} (\delta \tau^{w}|_{\tilde{Z}})  = \ind_{\tilde{Z}}^{\tilde{T}} ( \tau^{w}|_{\tilde{Z}})  
\rightarrow \pi \rightarrow \delta^{-1}\tau + \delta \tau^\omega \rightarrow 0.
\end{equation}

Using this form of principal series, it follows by the same reasoning as above, 
that  all representations of $\tilde{T}$ except $\delta \cdot \tau^\omega $ and $\delta^{-1} \cdot \tau$ appear with the multiplicity
with which it appears in $\ind_{\tilde{Z}}^{\tilde{T}} (\tau|_{\tilde{Z}})  $ which is $[F^{\times} : F^{\times 2}]$.

Next we  observe the following Lemma.

\begin{lemma} For an irreducible  principal series representation $\pi = \Ind_{\tilde{B}}^{\tilde{G}} \tau$,
and an irreducible genuine representation $\sigma$ of $\tilde{T}$, either 
$\sigma \not \in \{ \tau, \tau^{w} \}$ or $\sigma \not \in \{ \delta \tau^{w}, \delta^{-1} \tau \}$.
\end{lemma}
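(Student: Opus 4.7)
The plan is to show that the sets $\{\tau,\tau^w\}$ and $\{\delta\tau^w,\delta^{-1}\tau\}$ of isomorphism classes of irreducible genuine representations of $\tilde T$ are disjoint; this is equivalent to the lemma. The approach is to use Proposition~\ref{irrep of tilde T and tilde E}(1), which says that any irreducible genuine representation of $\tilde T$ is determined up to isomorphism by its central character on $\tilde T^2$, and thereby reduce each of the four potential coincidences between members of the two sets to an equality of characters on $\tilde T^2$.

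First I would dispose of the two ``diagonal'' coincidences $\tau\cong\delta^{-1}\tau$ and $\tau^w\cong\delta\tau^w$. Each of these forces $\delta$ to restrict trivially to $\tilde T^2$; but $\delta|_{\tilde T^2}$ is inherited from the manifestly non-trivial character $\mathrm{diag}(a,b)\mapsto|a/b|$ of $T$, so neither coincidence can hold (and this conclusion is independent of whether $\pi$ is irreducible).

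Next I would collapse the remaining two ``Weyl'' coincidences into a single case. Applying Weyl conjugation to $\tau\cong\delta\tau^w$ and using $\delta^w=\delta^{-1}$ (which follows at once from $\delta(\mathrm{diag}(a,b))=|a/b|$) yields $\tau^w\cong\delta^{-1}\tau$, so these two coincidences are equivalent. The condition $\tau\cong\delta\tau^w$, written in terms of the pair $(\chi_1,\chi_2)$ parameterizing the central character of $\tau$, translates on $\tilde T^2$ into precisely the reducibility condition for the associated $\GL_2(F)$-principal series $\chi_1^2\times\chi_2^2$ appearing in Moen's theorem (as recalled in Section~6), and therefore into the reducibility of $\pi$ itself. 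Since $\pi$ is irreducible by hypothesis, this condition too must fail.

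Having excluded all four coincidences, the two sets $\{\tau,\tau^w\}$ and $\{\delta\tau^w,\delta^{-1}\tau\}$ are disjoint and the lemma follows. I expect the main technical point to be the translation of the isomorphism $\tau\cong\delta\tau^w$, at the level of central characters on $\tilde T^2$, into Moen's reducibility condition for $\chi_1^2\times\chi_2^2$; the other three cases are formal consequences of the explicit shape of $\delta|_{\tilde T^2}$ and of the Weyl relation $\delta^w=\delta^{-1}$.
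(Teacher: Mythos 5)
Your proposal is correct and follows essentially the same route as the paper: both reduce the question to central characters on $\tilde{T}^{2}$ via Proposition \ref{irrep of tilde T and tilde E}(1), and both show that the only possible coincidence forces the condition $\chi_1^2/\chi_2^2=\nu^{\pm1}$, which by Moen's theorem is exactly the reducibility criterion excluded by the hypothesis. Your separate treatment of the ``diagonal'' cases via the non-triviality of $\delta|_{\tilde{T}^{2}}$ and the Weyl-conjugation symmetry is just a reorganization of the paper's direct listing of the four central characters.
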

\begin{proof} 
From Proposition \ref{irrep of tilde T and tilde E}(1), an irreducible representation $\sigma$ of
$\tilde{T}$ is determined by its central character $\omega_\sigma$ (a character of $\tilde{T}^2$). 
Therefore, if $\sigma \in \{ \tau, \tau^{w} \} \cap  \{ \delta \tau^{w}, \delta^{-1} \tau \}$, 
the central character of a representation in $ \{ \tau, \tau^{w} \}$ must be that of one in 
 $\{ \delta \tau^{w}, \delta^{-1} \tau \}$. 
Central character being a character of $\tilde{T}^2$, write 
the central character of $\tau$ as $(\chi_1,\chi_2)$ where $\chi_1,\chi_2: F^{\times 2} \rightarrow \C^\times$.
Thus the central character of $\tau^w$ is $(\chi_2,\chi_1)$, that of 
$\delta \tau^{w}$ is $(\nu^{1/2}\chi_2, \nu^{-1/2}\chi_1)$, 
and that of  $\delta^{-1} \tau $ is $(\nu^{-1/2}\chi_1, \nu^{1/2}\chi_2)$.
It follows that the set $\{ \tau, \tau^{w} \} \cap  \{ \delta \tau^{w}, \delta^{-1} \tau \}$ is nonempty 
only when (as characters of $F^{\times 2}$)
$$(\chi_1/\chi_2, \chi_2/\chi_1) = (\nu^{1/2},\nu^{-1/2}), 
{\rm ~~or~~}(\nu^{-1/2},\nu^{1/2}).$$ 
 
As recalled earlier, by \cite{Moe89}, these are exactly the conditions for reducibility of the
genuine principal series representation 
$\Ind_{\tilde{B}}^{\tilde{G}} \tau$,
which we are excluding, thus the proof of the lemma is completed. 
\end{proof}

We summarize our analysis on irreducible principal series representations in the following proposition.

\begin{proposition} For an irreducible  principal series representation $\pi = \Ind_{\tilde{B}}^{\tilde{G}} \tau$,
and an irreducible genuine representation $\sigma$ of $\tilde{T}$ such that $\omega_\pi = \omega_\sigma|_{\tilde{F}^{\times 2}}$,
\[
\dim \Hom_{\tilde{T}} (\pi, \sigma) = [F^{\times} : F^{\times 2}].
\]
\end{proposition}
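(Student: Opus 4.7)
The plan is to combine the two Mackey-type short exact sequences (\ref{ses for ps resticted to tilde T}) and (9) already exhibited in the excerpt with the preceding lemma. For any irreducible genuine representation $\sigma$ of $\tilde{T}$ whose central character extends $\omega_\pi$, the preceding lemma guarantees that at least one of the two sets $\{\tau, \tau^\omega\}$ or $\{\delta \tau^\omega, \delta^{-1}\tau\}$ does not contain $\sigma$. So I would fix a particular realization $\pi = \Ind_{\tilde{B}}^{\tilde{G}} \tau'$ (with $\tau' = \tau$ or $\tau' = \delta\tau^\omega$, as needed) arranged so that $\sigma$ is disjoint from the ``boundary'' quotient of the corresponding short exact sequence.

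With this choice, applying $\Hom_{\tilde{T}}(-, \sigma)$ to the short exact sequence of the form
\[
0 \to \ind_{\tilde{Z}}^{\tilde{T}}(\tau'|_{\tilde{Z}}) \to \pi \to \tau' + \tau'^{\omega,\mathrm{twist}} \to 0
\]
yields the long exact sequence already displayed in the excerpt. The $\Hom$ term of the quotient vanishes because an irreducible representation of $\tilde{T}$ is determined by its central character (Proposition \ref{irrep of tilde T and tilde E}(1)) and $\sigma$ is not among the boundary representations. The $\Ext^{1}$ term also vanishes by the extension lemma cited in the excerpt (Lemma \ref{extension lemma for tilde T}). Thus
\[
\dim \Hom_{\tilde{T}}(\pi, \sigma) = \dim \Hom_{\tilde{T}}\bigl(\ind_{\tilde{Z}}^{\tilde{T}}(\tau'|_{\tilde{Z}}),\, \sigma\bigr).
\]

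The final step is to compute the right hand side. By Frobenius reciprocity it equals $\dim \Hom_{\tilde{Z}}(\tau'|_{\tilde{Z}},\, \sigma|_{\tilde{Z}})$. Now $\tilde{Z}$ is an abelian subgroup of $\tilde{T}$ with $\tilde{Z} \cdot \tilde{T}^{2}$ maximal abelian (Lemma \ref{group structure T}), so by Corollary \ref{restriction to max-abelian} one has
\[
\tau'|_{\tilde{Z}} \;\cong\; \ind_{\tilde{Z}^{2}}^{\tilde{Z}} \omega_{\tau'}, \qquad \sigma|_{\tilde{Z}} \;\cong\; \ind_{\tilde{Z}^{2}}^{\tilde{Z}} \omega_{\sigma},
\]
and both central characters restrict on $\tilde{Z}^{2} = \tilde{F}^{\times 2}$ to $\omega_{\pi}$. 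Since $\tilde{Z}/\tilde{Z}^{2}$ is finite, each induced module is the multiplicity-one direct sum of all characters of $\tilde{Z}$ extending $\omega_{\pi}$; the two are therefore isomorphic, and the dimension of the $\Hom$ space equals $[\tilde{Z} : \tilde{Z}^{2}] = [F^{\times} : F^{\times 2}]$, as required.

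The main obstacle is the vanishing of $\Ext^{1}_{\tilde{T}}(\tau' + \tau'^{\omega,\mathrm{twist}},\, \sigma)$: this is what the cited extension lemma supplies, and everything else reduces to bookkeeping with Frobenius reciprocity and the explicit description of the restriction to the maximal abelian subgroup. The interplay with the lemma on $\{\tau,\tau^\omega\} \cap \{\delta\tau^\omega,\delta^{-1}\tau\}$ is crucial to ensure that one of the two realizations of $\pi$ always avoids $\sigma$ in the quotient, so that the $\Hom$ and $\Ext^{1}$ of the quotient both vanish and the long exact sequence collapses to an isomorphism.
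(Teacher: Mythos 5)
Your argument is correct and is essentially the paper's own proof: the same two (un-normalized) realizations of the principal series giving the two exact sequences, the same lemma that $\{\tau,\tau^{w}\}$ and $\{\delta\tau^{w},\delta^{-1}\tau\}$ cannot both contain $\sigma$, and the same extension lemma used to make the $\Hom$ and $\Ext^{1}$ terms of the boundary quotient vanish simultaneously. The only caveat is your appeal to Frobenius reciprocity for $\ind_{\tilde{Z}}^{\tilde{T}}$, which is not literally available since $\tilde{Z}$ is not open in $\tilde{T}$; the count $[F^{\times}:F^{\times 2}]$ is nevertheless correct (and is exactly what the paper asserts without detail), and can be justified by inducing in stages through the open finite-index subgroup $\tilde{Z}\tilde{T}^{2}$ and computing the one-dimensional space of equivariant functionals at the non-open step.
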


\begin{lemma} \label{extension lemma for tilde T}
Let $\sigma_{1}$ and $\sigma_{2}$ be two irreducible genuine representations of $\tilde{T}$ with 
$\sigma_{1}|_{\tilde{Z}} = \sigma_{2}|_{\tilde{Z}}$.
Then
\[
\dim \Hom_{\tilde{T}} (\sigma_{1}, \sigma_{2}) = \dim \Ext^{1}_{\tilde{T}} (\sigma_{1}, \sigma_{2}),
\]
and $\Ext^{i}_{\tilde{T}} (\sigma_{1}, \sigma_{2}) =0$ for $i \geq 2$,
where $\Ext^{i}_{\tilde{T}}$ is calculated in the category of representations of $\tilde{T}$ with a
 given central character  of $\tilde{Z}^2$ which is $\sigma_{1}|_{\tilde{Z}^2} = \sigma_{2}|_{\tilde{Z}^2}$.
\end{lemma}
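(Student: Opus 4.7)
The plan is to apply Shapiro's lemma to reduce the Ext computation to smooth group cohomology of a locally profinite abelian group, and then to observe that this cohomology is concentrated in degrees $0$ and $1$, each one-dimensional. Since $\tilde{T}$ is a Heisenberg group with finite-index maximal abelian subgroup $H:=\tilde{Z}\tilde{T}^{2}$ (Lemma~\ref{group structure T}), Proposition~\ref{irrep of tilde T and tilde E}(1) realizes $\sigma_{i}=\Ind_{H}^{\tilde{T}}\lambda_{i}$ for characters $\lambda_{i}$ of $H$. Since $[\tilde{T}:H]$ is finite, Shapiro's lemma together with Proposition~\ref{reps of H-groups}(4) gives
\[
\Ext^{i}_{\tilde{T}}(\sigma_{1},\sigma_{2}) \;=\; \bigoplus_{s\in \tilde{T}/H} \Ext^{i}_{H}(\lambda_{1},\lambda_{2}^{s}).
\]
The hypothesis $\sigma_{1}|_{\tilde{Z}}=\sigma_{2}|_{\tilde{Z}}$ and Corollary~\ref{restriction to max-abelian} force $\lambda_{1}$ and every $\lambda_{2}^{s}$ to agree on $\tilde{Z}^{2}$, so each $\chi_{s}:=\lambda_{2}^{s}\lambda_{1}^{-1}$ descends to a character of $A:=H/\tilde{Z}^{2}$; tensoring with $\lambda_{1}^{-1}$ identifies each summand with the smooth group cohomology $H^{i}(A,\chi_{s})$.

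For any non-trivial character $\chi$ of a locally profinite abelian group, $H^{i}(A,\chi)=0$ for all $i$: any $a\in A$ with $\chi(a)\neq 1$ acts on an extension of $\mathbf{1}$ by $\chi$ with distinct eigenvalues, forcing a splitting, and a dimension-shifting argument propagates this vanishing to all $i$. By the faithful action of $\tilde{T}/H$ on characters of $H$ (Proposition~\ref{reps of H-groups}), $\chi_{s}=\mathbf{1}$ occurs only when $\sigma_{1}\cong\sigma_{2}$, and then for a single $s$. Thus for $\sigma_{1}\not\cong\sigma_{2}$ the lemma holds vacuously, while for $\sigma_{1}\cong\sigma_{2}=\sigma$ only one summand contributes and $\Ext^{i}_{\tilde{T}}(\sigma,\sigma)=H^{i}(A,\C)$.

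It remains to compute $H^{i}(A,\C)$. A direct inspection of the construction of $\tilde{T}$ shows $A$ is an extension of $ZT^{2}/Z^{2}\cong F^{\times 2}\times (F^{\times}/F^{\times 2})$ by $\mu_{2}$. Since finite and profinite-compact factors contribute only in degree zero for smooth cohomology with discrete $\C$-coefficients, Hochschild--Serre and K\"unneth reduce the computation to the cohomology of the free rank-one factor $\Z$ inside $F^{\times 2}\cong (\text{compact})\times \Z$, yielding $H^{0}=H^{1}=\C$ and $H^{i}=0$ for $i\geq 2$. Combined with $\dim\Hom_{\tilde{T}}(\sigma,\sigma)=1$ from Schur's lemma, this yields $\dim\Hom=\dim\Ext^{1}=1$ and the vanishing of higher Exts. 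The main obstacle is the combined bookkeeping inside the fixed-$\tilde{Z}^{2}$-central-character category (validity of Shapiro's lemma and the twist equivalence, and the explicit identification of $A$); the essential geometric input is that the only non-compact quotient of $\tilde{T}/\tilde{Z}^{2}$ is a copy of $\Z$, which produces the one-dimensional $\Ext^{1}$ that balances $\dim\Hom$.
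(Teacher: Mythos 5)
Your proposal is correct and follows essentially the same route as the paper: Frobenius reciprocity (Shapiro) to reduce to the maximal abelian subgroup $\tilde{Z}\tilde{T}^{2}$, decomposition of the restriction into characters, and then the observation that the relevant quotient is (profinite)$\times\Z\times$(finite), so the cohomology of a character is zero unless the character is trivial, in which case it is $\C$ in degrees $0,1$ and zero above — which is exactly the ``Claim'' the paper states and leaves to the reader, so your write-up in effect supplies that omitted verification. Two harmless slips: since $\mu_{2}\subset\tilde{Z}^{2}$, the group $A=\tilde{Z}\tilde{T}^{2}/\tilde{Z}^{2}$ is literally $ZT^{2}/Z^{2}\cong F^{\times 2}\times F^{\times}/F^{\times 2}$ (no extra $\mu_{2}$-extension), and when $\sigma_{1}\not\cong\sigma_{2}$ the lemma holds not ``vacuously'' but because all summands, hence both $\Hom$ and $\Ext^{1}$, vanish — which is precisely what your computation shows.
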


\begin{proof}
We know that any irreducible genuine representation of $\tilde{T}$ is obtained as an induced
representation from a genuine character of the finite index subgroup $\tilde{Z} \tilde{T}^{2}$,
say $\sigma_{2} = \ind_{\tilde{Z} \tilde{T}^{2}}^{\tilde{T}} (\chi_{2})$.
By Frobenius reciprocity, for all $i \geq 0$ we have
\[
\Ext_{\tilde{T}}^{i} (\sigma_{1}, \sigma_{2}) = \Ext_{\tilde{Z} \tilde{T}^{2}}^{i} (\sigma_{1}, \chi_{2}).
\]
Since $\sigma_{1}$ restricted to the abelian subgroup $\tilde{Z} \tilde{T}^{2}$ 
is a sum of
characters, we are reduced to the following claim whose proof we leave to the reader. (Our application
of the claim below to the lemma above will involve $A = \tilde{Z} \tilde{T}^{2}$, and $C=\tilde{Z}^2$.)

{\bf Claim:} If $A$ is an abelian group with $C$ a subgroup of $A$ such that $A/C$ is of the form $F \times \Z$,
for $F$ a pro-finite group. 
Then for characters $\chi_{1}$ and  $\chi_{2}$ of $A$ with $\chi_{1}|_{C} = \chi_{2}|_{C}$ one has
$\Hom_{A} (\chi_{1}, \chi_{2}) \cong \Ext_{A}^{1} (\chi_{1}, \chi_{2})$, and
$\Ext_{A}^{i} (\chi_{1}, \chi_{2}) =0$ for $i \geq 2$
where $\Ext_{A}^{i}$ is calculated in the category of representations of $A$ whose restriction to $C$ is $\chi_{1}|_{C} = \chi_{2}|_{C}$. \end{proof}

\begin{proposition} 
 Let $\pi$ and $\pi'$ be the two sub-quotients of a genuine reducible principal series  representation $\Ind_{\tilde{B}}^{\tilde{G}} \tau$, and $\sigma$ an irreducible representation of $\tilde{T}$ with
 $\omega_{\sigma}|_{\tilde{F}^{\times 2}} = \omega_{\pi}$. Then we have:

\[
\dim \Hom_{\tilde{T}}(\pi, \sigma) + \dim \Hom_{\tilde{T}}(\pi', \sigma) 
= [F^{\times} : F^{\times 2}],
\]
except if $\sigma$ is either $\pi_N$ or $\pi'_N$. 
\end{proposition}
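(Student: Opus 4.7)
My plan is to run the Kirillov-filtration and Ext-vanishing argument of the previous proposition separately for each of the subquotients $\pi$ and $\pi'$, and then assemble the two answers via exactness of the twisted Jacquet functor $(-)_{N,\psi}$. The exclusion $\sigma \notin \{\pi_N,\pi'_N\}$ is precisely what makes the relevant $\Hom$ and $\Ext^1$ terms vanish in the long exact sequences.

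More concretely, for each irreducible admissible genuine subquotient $\pi$ of $\Ind_{\tilde{B}}^{\tilde{G}}\tau$, the Kirillov short exact sequence of $\tilde{T}$-modules
$$0 \to \ind_{\tilde{Z}}^{\tilde{T}} \pi_{N,\psi} \to \pi|_{\tilde{T}} \to \pi_N \to 0$$
of Section 7 yields, upon applying $\Hom_{\tilde{T}}(-,\sigma)$, a long exact sequence in which $\Hom_{\tilde{T}}(\pi_N,\sigma)$ and $\Ext^1_{\tilde{T}}(\pi_N,\sigma)$ appear as the outer terms. When $\sigma$ is an irreducible genuine representation of $\tilde{T}$ distinct from $\pi_N$ but sharing its central character on $\tilde{Z}^2$, both terms vanish by Lemma~\ref{extension lemma for tilde T} (which gives $\Ext^1 \cong \Hom = 0$ between distinct irreducibles with matching central characters). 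Consequently
$$\dim\Hom_{\tilde{T}}(\pi,\sigma) = \dim\Hom_{\tilde{T}}(\ind_{\tilde{Z}}^{\tilde{T}}\pi_{N,\psi},\sigma) = \dim \pi_{N,\psi},$$
the second equality following by Frobenius reciprocity together with $\sigma|_{\tilde{Z}} = \ind_{\tilde{Z}^2}^{\tilde{Z}}\omega_\sigma$ (Corollary~\ref{restriction to max-abelian}) and the assumption $\omega_\sigma|_{\tilde{Z}^2} = \omega_\pi$. The identical computation for $\pi'$ with $\sigma \neq \pi'_N$ gives $\dim\Hom_{\tilde{T}}(\pi',\sigma) = \dim \pi'_{N,\psi}$.

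To assemble, I would apply the exact functor $(-)_{N,\psi}$ to $0 \to \pi \to \Ind_{\tilde{B}}^{\tilde{G}}\tau \to \pi' \to 0$ to get $\dim \pi_{N,\psi} + \dim \pi'_{N,\psi} = \dim (\Ind_{\tilde{B}}^{\tilde{G}}\tau)_{N,\psi}$. The same Kirillov-filtration argument applied to $\Ind_{\tilde{B}}^{\tilde{G}}\tau$ itself (whose Jacquet module is the sum $\tau + \tau^w = \pi_N + \pi'_N$) shows $\dim\Hom_{\tilde{T}}(\Ind_{\tilde{B}}^{\tilde{G}}\tau, \sigma) = \dim (\Ind_{\tilde{B}}^{\tilde{G}}\tau)_{N,\psi}$, and the preceding proposition---whose proof via the exact sequence~(\ref{ses for ps resticted to tilde T}) requires only $\sigma \notin \{\tau,\tau^w\}$ and does not use irreducibility of $\Ind_{\tilde{B}}^{\tilde{G}}\tau$---evaluates the left-hand side as $[F^\times:F^{\times 2}]$. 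Chaining these three equalities closes the argument. The one point requiring care is the identification $\{\pi_N,\pi'_N\} = \{\tau,\tau^w\}$ so that the exclusion sets coincide; this is immediate from exactness of the Jacquet functor on the composition series of $\Ind_{\tilde{B}}^{\tilde{G}}\tau$, together with the fact that $\tau$ and $\tau^w$ are each irreducible genuine representations of $\tilde{T}$.
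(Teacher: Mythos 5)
Your treatment of the two subquotients separately is exactly the paper's argument: for an irreducible subquotient $\pi$, the Kirillov sequence $0 \to \ind_{\tilde{Z}}^{\tilde{T}}\pi_{N,\psi} \to \pi|_{\tilde{T}} \to \pi_{N} \to 0$, the vanishing of $\Hom_{\tilde{T}}(\pi_{N},\sigma)$ and $\Ext^{1}_{\tilde{T}}(\pi_{N},\sigma)$ via Lemma \ref{extension lemma for tilde T} when $\sigma \not\cong \pi_{N}$, and the count $\dim\Hom_{\tilde{T}}(\pi,\sigma)=\dim\pi_{N,\psi}$ are all as intended. The gap is in your assembly step, and it sits precisely at the point you flag as ``requiring care'': the identification $\{\pi_{N},\pi'_{N}\}=\{\tau,\tau^{w}\}$ is false. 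The pair $\{\tau,\tau^{w}\}$ appearing in the sequence (\ref{ses for ps resticted to tilde T}) arises from evaluating functions at the two $\tilde{T}$-fixed points of $\mathbb{P}^{1}(F)$ when one restricts the (un-normalized) induced representation to the subgroup $\tilde{T}$; it is not the Jacquet module. The Jacquet module of $\Ind_{\tilde{B}}^{\tilde{G}}\tau$ has composition factors $\tau$ and $\delta\tau^{w}$, the modulus character $\delta$ entering through the integration over $N$ on the big Bruhat cell, so by exactness $\{\pi_{N},\pi'_{N}\}=\{\tau,\delta\tau^{w}\}$. In the reducible case these sets differ: $\tau^{w}\cong\tau$ would force $\chi_{1}=\chi_{2}$ on $F^{\times 2}$, and $\tau^{w}\cong\delta\tau^{w}$ would force $\delta$ trivial on $T^{2}$, neither of which holds. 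Hence $\sigma\cong\tau^{w}$ is allowed by the hypotheses of the proposition (it avoids $\{\pi_{N},\pi'_{N}\}$ and has the right central character on $\tilde{F}^{\times 2}$) but lies in the exclusion set of the argument via (\ref{ses for ps resticted to tilde T}); for this $\sigma$ your chain breaks at the equality $\dim\Hom_{\tilde{T}}(\Ind_{\tilde{B}}^{\tilde{G}}\tau,\sigma)=[F^{\times}:F^{\times 2}]$.

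The repair is easy, and is how the paper closes the count: the quantity you need, $\dim\pi_{N,\psi}+\dim\pi'_{N,\psi}=\dim(\Ind_{\tilde{B}}^{\tilde{G}}\tau)_{N,\psi}$, does not depend on $\sigma$ at all and equals $[F^{\times}:F^{\times 2}]$, because for a genuine principal series every character of $\tilde{Z}$ lying over $\omega_{\pi}$ occurs, with multiplicity one, among the Whittaker data; in the notation of the paper, $\Omega(\pi,\psi)\sqcup\Omega(\pi',\psi)=\Omega(\omega_{\pi})$ and $\dim\Omega(\omega_{\pi})=[F^{\times}:F^{\times 2}]$ by \cite{GHPS79}. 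Alternatively, keep your double-counting scheme but run it against an auxiliary $\sigma_{0}$ chosen outside the finite set $\{\tau,\tau^{w},\delta\tau^{w}\}$, which exists since infinitely many central characters of $\tilde{T}^{2}$ extend $\omega_{\pi}$. Either repair also removes the need to invoke the Kirillov filtration for the full reducible representation $\Ind_{\tilde{B}}^{\tilde{G}}\tau$, which the paper's Kirillov-model proposition states only for irreducible representations.
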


\begin{proof} 
The conclusion of the proposition follows from the exact sequence of Kirillov theory,
\[
0 \rightarrow \mathcal{S}(F^{\times}, \pi_{N, \psi}) \rightarrow \mathtt{K}(\pi) \rightarrow \pi_{N} \rightarrow 0,
\]
together with Lemma \ref{extension lemma for tilde T}.
\end{proof}

\begin{remark} In the previous proposition, we do not know the exact value of
$\dim \Hom_{\tilde{T}}(\pi, \pi_N)$  
which for all we know at the moment may take any of the two values
$\dim \pi_{N,\psi} $ or  $\dim \pi_{N,\psi} +1$; similarly for $\dim \Hom_{\tilde{T}}(\pi', \pi'_N)$. However, since 
$\dim \pi_{N,\psi} $ is either $[F^\times: F^{\times 2}] -1$, or 1,  this does not affect the conclusion of our main theorem 1.1.
\end{remark}  
  
\section{Correspondence $P: \Irrep_{sc}(\widetilde{\SL}_{2}(F)) \rightarrow \Irrep_{sc}({\SL}_{2}(F))$} \label{correspondence}
Let $\Irrep_{sc}(\widetilde{\SL}_{2}(F))$ denote the set of isomorphism classes of irreducible genuine supercuspidal representations of $\widetilde{\SL}_{2}(F)$ and $\Irrep_{sc}(\SL_{2}(F))$, 
the set of isomorphism classes of irreducible supercuspidal representations of $\SL_{2}(F)$.
Assuming that the residue characteristic $p$ of the field $F$ is odd,
we shall define a natural correspondence from $\Irrep_{sc}(\widetilde{\SL}_{2}(F))$ to $\Irrep_{sc}({\SL}_{2}(F))$. 
This correspondence will allow us to transfer a question on representations 
on the covering group $\widetilde{\SL}_{2}(F)$ to a similar question on the linear group $\SL_{2}(F)$.
Let $\mathcal{O}_{F}$ denote the ring of integers of $F$ and $\varpi$ a uniformizer in $\mathcal{O}_{F}$. 

We recall the following well-known result.

\begin{lemma} \cite{Kub69} \label{splitting of K}
Assume that the residue characteristic of $F$ is odd. 
Let $K$ be a maximal compact subgroup of $\SL_{2}(F)$.
Then the covering $\widetilde{\SL}_{2}(F)$ of $\SL_{2}(F)$ splits when restricted to the subgroup $K$.
\end{lemma}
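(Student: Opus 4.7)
The plan is to produce an explicit continuous group-theoretic splitting $s: K \to \widetilde{\SL}_{2}(F)$ of the metaplectic cover, where $K = \SL_{2}(\mathcal{O}_F)$, by trivializing the restriction to $K \times K$ of the Kubota 2-cocycle $\beta$ defining the cover. Recall that $\beta$ is expressible entirely in terms of the quadratic Hilbert symbol $(\cdot,\cdot)_F$ applied to functions of the lower-row entries of matrices in $\SL_{2}(F)$.

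The crucial input from the odd residue characteristic hypothesis is the elementary fact that $(u,v)_F = 1$ whenever $u,v \in \mathcal{O}_F^\times$. To see this, note that for $v$ a non-square unit the extension $F(\sqrt{v})/F$ is unramified (since $p$ is odd), and the norm map on an unramified extension surjects onto $\mathcal{O}_F^\times$; hence every unit $u$ is a norm, giving $(u,v)_F = 1$. This is precisely the statement that fails for $p = 2$, where $\mathcal{O}_F^\times / \mathcal{O}_F^{\times 2}$ is larger and the analogous conclusion breaks down.

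With this in hand, I would proceed via the Iwahori decomposition $K = I \sqcup I w I$. On the Iwahori subgroup $I$, consisting of elements whose lower-left entry lies in $\varpi \mathcal{O}_F$ (so the lower-right entry is a unit), direct inspection of the Kubota formula shows that $\beta|_{I \times I}$ is given by Hilbert symbols of pairs of units, hence trivial by the preceding step. On the opposite Bruhat translate one performs the analogous check after conjugating by the Weyl element $w = \bigl(\begin{smallmatrix} 0 & -1 \\ 1 & 0 \end{smallmatrix}\bigr)$, whose lift can be chosen so that the induced cocycle values again reduce to Hilbert symbols of units.

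The main obstacle is the book-keeping that glues these cellwise trivializations into a single splitting: one must exhibit a cochain $\phi : K \to \mu_2$ (trivial on $I$, and given by an explicit Hilbert symbol of uniformizer-powers on the opposite cell) whose coboundary equals $\beta|_{K \times K}$ uniformly across all of $K$. This reduces to verifying multiplicativity of $s = \phi \cdot (\text{obvious set-theoretic lift})$ on the generators $N(\mathcal{O}_F)$, its opposite $N^{-}(\mathcal{O}_F)$, and $w$, and the resulting relations match the Steinberg presentation of the residual group $K / K(1) \cong \SL_{2}(\mathbb{F}_q)$. Since the Schur multiplier of $\SL_{2}(\mathbb{F}_q)$ vanishes for $q$ odd and $q > 3$ (with $q = 3$ handled by an explicit check), the relations are verified, producing the desired splitting. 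This is the substance of Kubota's original construction in \cite{Kub69}.
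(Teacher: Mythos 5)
The paper itself offers no proof of this lemma --- it is quoted from Kubota \cite{Kub69} --- so what matters is whether your reconstruction is sound, and it is not, although you correctly isolate the one genuinely arithmetic input: $(u,v)_F=1$ for units $u,v$ when $p$ is odd. The cell-by-cell claims are wrong. The Kubota cocycle is \emph{not} trivial on $I\times I$: take $u\in\mathcal{O}_F^\times$ a non-square, $g_1=\left(\begin{smallmatrix} u & 0 \\ 0 & u^{-1}\end{smallmatrix}\right)$ and $g_2=\left(\begin{smallmatrix} 1 & 0 \\ \varpi & 1\end{smallmatrix}\right)$, both in the Iwahori subgroup $I$; then $x(g_1)=u^{-1}$, $x(g_2)=\varpi$, $x(g_1g_2)=u^{-1}\varpi$, and $\beta(g_1,g_2)=(\varpi,u)_F=-1$ since $F(\sqrt{u})/F$ is unramified and $\varpi$ has odd valuation. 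The point is that the entries entering the cocycle on $I\times I$ need not be units (the lower-left entries lie in $\varpi\mathcal{O}_F$), so the unit-unit triviality does not apply there. Correspondingly, your proposed correcting cochain (trivial on $I$, supported on the big cell $IwI$) is backwards: Kubota's splitting is $s(g)=(g,\kappa(g))$ with $\kappa(g)=(c,d)_F$ when $0<|c|<1$ and $\kappa(g)=1$ otherwise, i.e.\ $\kappa$ is nontrivial precisely on part of $I$ and trivial on the big cell (where $c$ is a unit), and the proof is a direct case-by-case verification of $\beta(g_1,g_2)\,\kappa(g_1)\kappa(g_2)=\kappa(g_1g_2)$ using the unit-unit triviality, not a cellwise trivialization glued along the Bruhat decomposition.

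The final step also does not hang together as stated. Vanishing of $H^2(\SL_2(\mathbb{F}_q),\mu_2)$ is an existence statement for the finite quotient; it cannot ``verify the relations'' for a specific candidate section of the cover of $K$, and you supply no bridge between the profinite group $K$ and its reduction. The standard abstract argument that does work is: the principal congruence subgroup $K(1)$ is pro-$p$ with $p$ odd, so the $\mu_2$-cover splits over $K(1)$, and uniquely (no nontrivial quadratic characters of a pro-$p$ group); uniqueness forces the image of the splitting to be normal in $\tilde{K}$, so the cover descends to a central extension of $K/K(1)\cong\SL_2(\mathbb{F}_q)$ by $\mu_2$, which splits because $H^2(\SL_2(\mathbb{F}_q),\mu_2)=0$ --- note here that your assertion that the Schur multiplier vanishes for all odd $q>3$ is false for $q=9$ (it is $\Z/3$), though being of odd order it does not affect the $\mu_2$-valued conclusion. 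Finally, the lemma concerns both conjugacy classes of maximal compact subgroups; the case of $K_2$ should be reduced to $K_1=\SL_2(\mathcal{O}_F)$ by conjugating by ${\rm diag}(\varpi,1)$, which is legitimate because that conjugation lifts to $\widetilde{\SL}_2(F)$ in the semidirect product structure $\widetilde{\GL}_2(F)=\widetilde{\SL}_2(F)\rtimes F^\times$ fixed in the paper.
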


Recall that there are two conjugacy classes of maximal compact subgroups of $\SL_{2}(F)$ which can be represented by 
\[
K_{1} = \SL_{2}(\mathcal{O}_{F}) \text{ and } K_{2} = \left( \begin{matrix} \varpi & 0 \\ 0 & 1 \end{matrix} \right) K_{1} \left( \begin{matrix} \varpi^{-1} & 0 \\ 0 & 1 \end{matrix} \right).
\]
We recall the following result due to Manderscheid.

\begin{proposition}\cite[~Theorem~1.3]{Man84} \label{compact correpondence}
Any irreducible supercuspidal representation of $\widetilde{\SL}_{2}(F)$ can be obtained as an induced representation 
from an irreducible finite dimensional representation of either $\tilde{K}_{1}$ or $\tilde{K}_{2}$.
\end{proposition}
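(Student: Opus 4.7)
The plan is to emulate the classical argument that every supercuspidal representation of $\SL_2(F)$ is compactly induced from a maximal compact subgroup, now over the twofold cover, using Lemma \ref{splitting of K} as the essential technical bridge. Since the cover $\widetilde{\SL}_2(F) \to \SL_2(F)$ splits over each $K_i$, an irreducible genuine representation of $\tilde{K}_i$ is precisely of the form $\rho \otimes \epsilon$, where $\rho$ is an irreducible representation of $K_i$ and $\epsilon$ is the non-trivial character of $\mu_2$. So every statement about compact induction from $\tilde{K}_i$ translates faithfully into the corresponding statement on the linear side.

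First I would establish the general realization principle: any irreducible genuine supercuspidal representation $\pi$ of $\widetilde{\SL}_2(F)$ is isomorphic to $\ind_{H}^{\widetilde{\SL}_2(F)} \tilde{\rho}$ for some irreducible finite-dimensional representation $\tilde{\rho}$ of some open subgroup $H$ which is compact modulo the center. Given a nonzero $v \in \pi$ fixed by a small compact open subgroup $H_0$, let $V_0$ be the finite-dimensional $H$-subrepresentation of $\pi$ generated from $v$ for some $H \supseteq H_0$. Using compact support of matrix coefficients modulo the center of $\widetilde{\SL}_2(F)$, Frobenius reciprocity, and irreducibility of $\pi$, one sees that the canonical $\widetilde{\SL}_2(F)$-equivariant map $\ind_H^{\widetilde{\SL}_2(F)} V_0 \to \pi$ is an isomorphism onto an irreducible constituent which must equal $\pi$.

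Second, the inducing subgroup $H$ has to be promoted to one of the two maximal compact subgroups $\tilde{K}_i$. This is where the rank-one structure of $\SL_2(F)$ is decisive: the Bruhat-Tits building is a tree whose vertex stabilizers are conjugate to $K_1$ or $K_2$ and whose edge stabilizers are Iwahori subgroups. Choosing a pair $(H, \tilde{\rho})$ of minimal depth in the Moy--Prasad sense, the tree structure is used to show that $H$ can be taken to be a vertex stabilizer rather than only an edge (Iwahori) stabilizer: were the minimal type supported only at an Iwahori, one could realize $\pi$ inside a principal series, contradicting supercuspidality. Hence $(H, \tilde{\rho})$ is conjugate to $(\tilde{K}_i, \tilde{\rho})$ for some $i \in \{1,2\}$.

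The main obstacle is this vertex-versus-edge step, which is a genuine rank-one phenomenon and is the technical heart of Manderscheid's proof (it fails for higher-rank groups, where supercuspidals may require strictly deeper types). The hypothesis that $p$ is odd enters crucially through Lemma \ref{splitting of K}: without the tame splitting one cannot identify genuine representations of $\tilde{K}_i$ with twists of ordinary representations of $K_i$, and the classical tamely-ramified type theory on the linear side would have to be rebuilt from scratch on the cover.
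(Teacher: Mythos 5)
The paper itself offers no proof of this proposition: it is quoted directly from Manderscheid \cite[Theorem~1.3]{Man84} and used as a black box, so your argument has to stand on its own, and as written it does not. The first step, your ``general realization principle'' that every irreducible genuine supercuspidal $\pi$ is compactly induced from \emph{some} open subgroup $H$ compact modulo centre, is not established by the argument you give. Frobenius reciprocity for compact induction produces a nonzero, hence surjective, map $\ind_{H}^{\widetilde{\SL}_2(F)} V_0 \rightarrow \pi$ for \emph{any} choice of $H$ and any $H$-subrepresentation $V_0$; the entire difficulty is injectivity (equivalently, arranging that the induced representation is irreducible, or at least that $\pi$ splits off as a summand), and this fails for a generic choice of $H \supseteq H_0$ and $V_0$, so ``one sees that the canonical map is an isomorphism'' is asserting exactly what has to be proved. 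The classical matrix-coefficient argument requires exhibiting a matrix coefficient whose support lies \emph{inside} $H$, which forces a careful choice of $(H,\tilde{\rho})$ that you never make; indeed, for general reductive $p$-adic groups the statement you treat as a soft preliminary is a well-known open problem, so no argument at this level of generality can deliver it.

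The second step is likewise asserted rather than proved: the claim that a minimal type cannot live only on an Iwahori subgroup (else $\pi$ would embed in a principal series, contradicting supercuspidality) is stated, and you then concede that this vertex-versus-edge dichotomy is ``the technical heart of Manderscheid's proof.'' That concession means the proposal is an outline of the theorem, not a proof of it. Moreover, the Moy--Prasad minimal $K$-type machinery you invoke is built for linear groups; to run it on $\widetilde{\SL}_2(F)$ one must either redo the relevant intertwining and Hecke-algebra computations on the cover or transfer them through the splitting over parahoric subgroups, and that transfer needs compatibility statements of the kind the paper proves separately (e.g.\ Lemma \ref{invariant splitting}), none of which appear in your sketch. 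What is correct, and consistent with the paper's setup, is the preliminary observation that for $p$ odd the cover splits (uniquely) over $K_1$ and $K_2$, so genuine irreducible representations of $\tilde{K}_i$ are exactly the twists $\rho \otimes {\rm sign}$ of irreducible representations of $K_i$; but that identification is the easy part, and the two substantive steps of the proposition remain unproven.
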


\begin{proposition}
Let $K$ be a maximal compact subgroup of $\SL_{2}(F)$. 
There is a natural bijection between $\Irrep(K)$, the set of isomorphism classes of 
irreducible representations of $K$, and $\Irrep_{gen}(\tilde{K})$ the set of isomorphism classes of irreducible genuine representations of $\tilde{K}$, 
\[
\Irrep_{gen}(\tilde{K}) \longleftrightarrow \Irrep(K).
\] 
\end{proposition}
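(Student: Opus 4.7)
The plan is to leverage Lemma~\ref{splitting of K} to reduce to the representation theory of a direct product $K \times \mu_2$. First I would fix a splitting $s: K \hookrightarrow \tilde{K}$ of the central extension $1 \to \mu_2 \to \tilde{K} \to K \to 1$, which exists by Lemma~\ref{splitting of K}. The map $(k,\epsilon) \mapsto s(k) \cdot \epsilon$ gives a group isomorphism $K \times \mu_2 \xrightarrow{\sim} \tilde{K}$. Under this isomorphism, any irreducible smooth representation of $\tilde{K}$ has the form $\rho \boxtimes \chi$ with $\rho \in \Irrep(K)$ and $\chi \in \widehat{\mu_2}$, and is genuine exactly when $\chi$ is the nontrivial character $\chi_0$ of $\mu_2$. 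This immediately yields a candidate bijection
\[
\Irrep(K) \;\longleftrightarrow\; \Irrep_{gen}(\tilde{K}), \qquad \rho \;\longmapsto\; \rho \boxtimes \chi_0.
\]

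The real content is verifying that this bijection does not depend on the chosen splitting, so as to justify the word \emph{natural}. Any two splittings $s, s'$ differ by a continuous homomorphism $\eta: K \to \mu_2$, namely $s'(k) = \eta(k) s(k)$; the resulting bijections then differ by the twist $\rho \mapsto \rho \otimes \eta$. Thus naturality reduces to showing that the group of continuous homomorphisms $K \to \mu_2$ is trivial. I would argue this by noting that any such character has finite image; since $p$ is odd, the first congruence subgroup $K(1) \subset K$ is a pro-$p$ group and therefore lies in the kernel, so the character factors through the finite quotient $K/K(1) \cong \SL_{2}(k_F)$. For $q = |k_F|$ odd, $\SL_{2}(k_F)$ is perfect when $q > 3$ and has abelianization $\Z/3$ when $q = 3$; in either case it admits no nontrivial homomorphism to $\mu_2$. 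Since the other conjugacy class of maximal compacts is represented by $K_2$, which is conjugate to $K_1 = \SL_{2}(\mathcal{O}_F)$ inside $\GL_{2}(F)$ and hence abstractly isomorphic to it, the same argument covers both cases.

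The main obstacle is precisely this splitting-uniqueness step, which hinges entirely on the odd residue characteristic assumption: without it, both the existence of a splitting (Lemma~\ref{splitting of K}) and the vanishing of $\Hom_{\mathrm{cts}}(K, \mu_2)$ can fail, so the clean canonical bijection is genuinely a feature of odd residue characteristic. Once these two inputs are in place, the bijection is well-defined independent of choices, and the identification of genuine irreducibles with arbitrary irreducibles of $K$ is straightforward from the decomposition of representations of the direct product.
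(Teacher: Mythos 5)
Your proof is correct and follows essentially the same route as the paper: fix a splitting from Lemma \ref{splitting of K} to identify $\tilde{K}$ with $K \times \mu_2$, match genuine irreducibles with $\Irrep(K)$, and obtain canonicity from the uniqueness of the splitting, i.e.\ the absence of nontrivial quadratic characters of $K$ in odd residue characteristic (the paper likewise invokes that $\SL_2(\mathbb{F}_3)$ has only a character of order $3$). Your reduction via the pro-$p$ congruence subgroup to $\SL_2(k_F)$ simply spells out the step the paper calls ``easy to see.''
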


\begin{proof}
Using a  splitting $s : K \hookrightarrow \tilde{G}$ given by Lemma \ref{splitting of K},  
fix an isomorphism $\tilde{K} \cong K \times_{s} \mu_{2}$. 
Observe that any two splittings $s_1,s_2 : 
K \hookrightarrow \tilde{G}$ differ by a homomorphism $K \rightarrow \mu_{2}$. It is easy to see that $K$ has no  nontrivial
characters of order 2 unless the residue field of $F$ has order 2,3. Since we are only considering odd residue characteristic 
anyway, and since it can be checked that the only nontrivial character of 
$\SL_2({\mathbb F}_3)$ has  order 3, there is a 
unique splitting in all the cases we are considering.

The isomorphism $\tilde{K} \cong K \times_{s} \mu_{2}$
defines a bijection between the set of isomorphism classes of 
irreducible  representations of $K$, and irreducible  genuine representations of $\tilde{K}$. 
Since there is a unique splitting over any maximal compact subgroup $\SL_{2}(F)$, the bijection between 
irreducible representations of $K$ and  irreducible genuine representations of $\tilde{K}$ is canonical. \end{proof}

\begin{theorem} \label{SL2 correspondence}
Let $\Irrep_{sc}(\widetilde{\SL}_{2}(F))$ be the set of equivalence classes of irreducible admissible genuine supercuspidal representations of $\widetilde{\SL}_{2}(F)$ and $\Irrep_{sc}(\SL_{2}(F))$, the set of equivalence classes of irreducible admissible supercuspidal representations of $\SL_{2}(F)$.
There is a natural bijection between 
\[
P : \Irrep_{sc}(\widetilde{\SL}_{2}(F)) \rightarrow \Irrep_{sc}({\SL}_{2}(F)).
\]
\end{theorem}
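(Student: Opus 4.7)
The plan is to build $P$ via Manderscheid's description of supercuspidals as compact inductions, transport the inducing datum across the canonical splitting $s\colon K\hookrightarrow \tilde{G}$ granted by the preceding proposition, and then verify both well-definedness and bijectivity by a Mackey intertwining calculation.

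\textbf{Construction.} Given $\tilde\pi\in\Irrep_{sc}(\widetilde{\SL}_{2}(F))$, use Proposition \ref{compact correpondence} to write $\tilde\pi\cong\ind_{\tilde K_i}^{\widetilde{\SL}_2(F)}\tilde\rho$ for some $i\in\{1,2\}$ and some irreducible genuine $\tilde\rho$ of $\tilde K_i$. The canonical bijection $\Irrep_{gen}(\tilde K_i)\leftrightarrow\Irrep(K_i)$ from the previous proposition attaches a unique irreducible $\rho$ of $K_i$ to $\tilde\rho$, and I set
\[
P(\tilde\pi):=\ind_{K_i}^{\SL_2(F)}\rho.
\]
The analogue of Proposition \ref{compact correpondence} for $\SL_2(F)$ (which holds in odd residue characteristic by the same Kutzko/Manderscheid-type argument) will give the inverse map $P^{-1}$ in the same fashion.

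\textbf{Well-definedness and injectivity.} Suppose $\ind_{\tilde K_i}^{\tilde G}\tilde\rho_1\cong\ind_{\tilde K_j}^{\tilde G}\tilde\rho_2$. By Mackey's intertwining formula, this is equivalent to the existence of $g\in\tilde G$ together with a nonzero element of
\[
\Hom_{\tilde K_i\cap g^{-1}\tilde K_j g}\bigl(\tilde\rho_1,\,{}^{g}\tilde\rho_2\bigr),
\]
and analogously without tildes. Since $\mu_2$ is central in $\tilde G$, the double coset spaces $\tilde K_j\backslash\tilde G/\tilde K_i$ and $K_j\backslash G/K_i$ coincide, and for each representative $g\in G$ the compact subgroup $K_i\cap g^{-1}K_j g$ of $\SL_2(F)$ sits inside two maximal compact subgroups, each carrying its own canonical splitting. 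The uniqueness of the splitting of a maximal compact (established in the proof of the previous proposition) forces the two restrictions to $K_i\cap g^{-1}K_j g$ to coincide; equivalently, $\widetilde{K_i\cap g^{-1}K_j g}$ is canonically $(K_i\cap g^{-1}K_j g)\times\mu_2$ in a manner compatible with both $\tilde K_i$ and $\tilde K_j$. Consequently the genuine-intertwiner space for the tilded data is naturally identified with the intertwiner space for the untilded data, giving
\[
\Hom_{\tilde G}\bigl(\ind_{\tilde K_i}^{\tilde G}\tilde\rho_1,\ind_{\tilde K_j}^{\tilde G}\tilde\rho_2\bigr)\;\cong\;\Hom_{G}\bigl(\ind_{K_i}^{G}\rho_1,\ind_{K_j}^{G}\rho_2\bigr).
\]
This simultaneously shows that $P$ is well-defined (taking $\tilde\rho_1,\tilde\rho_2$ equivalent inducing data for a single $\tilde\pi$), shows $P(\tilde\pi)$ is irreducible, and, applied in the other direction, shows $P$ is injective.

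\textbf{Surjectivity.} Every irreducible supercuspidal $\pi$ of $\SL_2(F)$ is compactly induced from an irreducible representation $\rho$ of some $K_i$; transporting $\rho$ to $\tilde\rho$ via the canonical splitting and inducing gives an irreducible genuine supercuspidal $\tilde\pi$ of $\widetilde{\SL}_2(F)$ with $P(\tilde\pi)\cong\pi$.

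\textbf{Main obstacle.} The nontrivial content is the verification that the Mackey intertwining computation on $\widetilde{\SL}_2(F)$ matches the one on $\SL_2(F)$. This reduces to checking that the two canonical splittings of adjacent maximal compact subgroups agree on their intersection, which in turn rests on the uniqueness of the splitting of a maximal compact in $\widetilde{\SL}_2(F)$ in odd residue characteristic. Once this compatibility is in place, the rest is formal.
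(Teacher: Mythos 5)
Your overall architecture is the same as the paper's: use Manderscheid's compact-induction theorem, transport the inducing representation across the splitting of $\tilde K$, and compare Mackey intertwining spaces upstairs and downstairs. However, the step you dismiss as following from ``uniqueness of the splitting of a maximal compact'' is precisely where the real content lies, and your justification for it does not work. The two splittings you must compare live on the \emph{intersection} $K\cap K^{g}$ (for the relevant diagonal representatives $g$ this is a group of the form $\Gamma_{0}(\varpi^{2n})$), not on a maximal compact subgroup. Uniqueness of the splitting over each of the two maximal compacts says nothing about their restrictions to this smaller group: the discrepancy is a quadratic character of $K\cap K^{g}$, and $\Gamma_{0}(\varpi^{2n})$ \emph{does} admit nontrivial quadratic characters (it surjects onto $(\mathcal{O}_{F}/\varpi^{2n})^{\times}$, which has even order for $p$ odd). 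So the claim that $\widetilde{K\cap K^{g}}$ is ``canonically $(K\cap K^{g})\times\mu_{2}$ in a manner compatible with both $\tilde K$ and $\tilde K^{g}$'' is exactly what has to be proved, and it is not formal.

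The paper closes this gap with a separate lemma (Lemma \ref{invariant splitting}): reduce to $K=\SL_{2}(\mathcal{O}_{F})$ and diagonal double-coset representatives $g=\mathrm{diag}(\varpi^{n},\varpi^{-n})$; observe that the difference $k\mapsto s(k)^{-1}\tilde g\, s(k^{g^{-1}})\tilde g^{-1}$ is a quadratic character of $\Gamma_{0}(\varpi^{2n})$, which for $p\neq 2$ factors through $(\mathcal{O}_{F}/\varpi^{2n})^{\times}$ and is therefore determined by its values on diagonal elements; and finally note that on diagonal $k$ the expression is trivial because the inverse image of the split diagonal torus in $\widetilde{\SL}_{2}(F)$ is \emph{abelian} — a special feature of the twofold metaplectic cover, emphasized in the paper, and not a consequence of uniqueness of splittings. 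Without this (or an equivalent) argument your identification of intertwining spaces, and hence well-definedness, irreducibility of $P(\tilde\pi)$, injectivity and surjectivity, is unsupported. (Two smaller points: the paper only needs the same maximal compact on both sides of the Mackey computation, deducing irreducibility from $\Hom_{G}(\pi,\pi)=\C$ via \cite[Theorem 3.11.4]{BH06}, so your cross-comparison of $K_{1}$ with $K_{2}$ would require a correspondingly more general compatibility statement; and the irreducibility criterion you invoke implicitly also needs such a citation.)
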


\begin{proof}
Let $\tilde{\pi}$ be an irreducible admissible supercuspidal representation of $\widetilde{\SL}_{2}(F)$.
By the work of Manderscheid, Proposition \ref{compact correpondence}, $\tilde{\pi}$ is isomorphic to an induced representation $\ind_{\tilde{K}}^{\tilde{G}} \tilde{\sigma}$ which is induced from an irreducible representation $\tilde{\sigma}$ of a maximal compact subgroup $\tilde{K}$ where $K$ is either $K_{1}$ or $K_{2}$.
Let $\sigma \in \Irrep(K)$ which corresponds to $\tilde{\sigma}$ in the manner described in Proposition \ref{compact correpondence}, i.e., under the isomorphism $\widetilde{K} = K \times \Z/2$, $\tilde{\sigma} = \sigma \times {sign}$.

We claim that $\pi := \ind_{K}^{G} \sigma$ is an irreducible admissible supercuspidal representation of $G$.
Given this claim, we define 
\[
P( \tilde{\pi}) = \pi.
\]
It is known that if $\pi$ is irreducible then it is also supercuspidal. 
So we shall only prove that $\pi$ is irreducible. 
By \cite[~Theorem~3.11.4, and remark 1 following it]{BH06} it suffices to prove that  $\Hom_{G} (\pi, \pi) = \C$ where we shorten the notation and write $G$ for $\SL_{2}(F)$. (We thank Sandeep Varma for this reference.)
By Mackey theory, we have
\[
\Hom_{G} (\pi, \pi) = \C \oplus \left( \bigoplus_{1 \neq g \in K \backslash G / K} \Hom_{K \cap K^{g}} (\sigma|_{K \cap K^{g}}, \sigma^{g}|_{K \cap K^{g}}) \right)
\]
Note that under the natural map $\widetilde{G}\rightarrow G$, 
the set of double cosets $\tilde{K} \backslash \tilde{G} / \tilde{K}$ is in bijection with the set of 
double cosets $K \backslash G/K$. 
For $g \in K \backslash G/K$, if we prove
\[
\Hom_{\tilde{K} \cap \tilde{K}^{g}} (\tilde{\sigma}|_{\tilde{K} \cap \tilde{K}^{g}}, \tilde{\sigma}^{g}|_{\tilde{K} \cap \tilde{K}^{g}})  \cong \Hom_{K \cap K^{g}} (\sigma|_{K \cap K^{g}}, \sigma^{g}|_{K \cap K^{g}})
\]
then theorem will follow, because the irreduciblity of the compact induction for $\tilde{\pi} = \ind_{\tilde{K}}^{\tilde{G}} \tilde{\sigma}$ implies that for $1 \neq g \in K \backslash G/K$, the space $\Hom_{\tilde{K} \cap \tilde{K}^{g}} (\tilde{\sigma}|_{\tilde{K} \cap \tilde{K}^{g}}, \tilde{\sigma}^{g}|_{\tilde{K} \cap \tilde{K}^{g}}) = 0$.
\end{proof}
\begin{lemma} \label{Mackey theory for correspondence}
There is an isomorphism
 \[
\Hom_{\tilde{K} \cap \tilde{K}^{g}} (\tilde{\sigma}|_{\tilde{K} \cap \tilde{K}^{g}}, \tilde{\sigma}^{g}|_{\tilde{K} \cap \tilde{K}^{g}})  \cong \Hom_{K \cap K^{g}} (\sigma|_{K \cap K^{g}}, \sigma^{g}|_{K \cap K^{g}}).
 \]
 In particular, both the spaces are simultaneously zero or non-zero.
\end{lemma}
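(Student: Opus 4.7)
The plan is to use the canonical splitting $s : K \hookrightarrow \widetilde{G}$ of Lemma \ref{splitting of K} to rewrite the Hom-space on the covering side as an ordinary Hom-space. The splitting identifies $\tilde{K}$ with $K \times \mu_2$ and makes the genuine representation $\tilde{\sigma}$ correspond to $(\sigma, \mathrm{sgn})$, i.e. $s(k)$ acts by $\sigma(k)$ and the non-trivial element of $\mu_2$ acts by $-1$. For any lift $\tilde{g} \in \widetilde{G}$ of $g$, the formula $s^{g}(k) := \tilde{g}^{-1} s(g k g^{-1}) \tilde{g}$ yields a homomorphism $K^{g} \to \widetilde{G}$ independent of the choice of $\tilde{g}$ (since $\mu_2$ is central), which is a splitting of $\tilde{K}^{g} \to K^{g}$; by the uniqueness in Lemma \ref{splitting of K}, $s^{g}$ is the canonical splitting over $K^{g}$, and a direct check shows that under $s^{g}$ the conjugate genuine representation $\tilde{\sigma}^{g}$ corresponds to $\sigma^{g}$.

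The restrictions $s|_{K \cap K^{g}}$ and $s^{g}|_{K \cap K^{g}}$ are both splittings of the extension $1 \to \mu_2 \to \tilde{K} \cap \tilde{K}^{g} \to K \cap K^{g} \to 1$, and they differ by a character $\chi : K \cap K^{g} \to \mu_2$. Once $\chi$ is known to be trivial, the two splittings give a single consistent identification $\tilde{K} \cap \tilde{K}^{g} \cong (K \cap K^{g}) \times \mu_2$ under which $\tilde{\sigma}$ and $\tilde{\sigma}^{g}$ restrict to $(\sigma|_{K \cap K^{g}}, \mathrm{sgn})$ and $(\sigma^{g}|_{K \cap K^{g}}, \mathrm{sgn})$ respectively. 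The intertwiners between the two genuine representations of $\tilde{K} \cap \tilde{K}^{g}$ then match, term by term, the intertwiners between $\sigma|_{K \cap K^{g}}$ and $\sigma^{g}|_{K \cap K^{g}}$ on the linear side, giving the claimed isomorphism.

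The main obstacle is showing that the difference character $\chi$ is trivial. Since $p$ is odd, $\chi$ automatically vanishes on the pro-$p$ radical of $K \cap K^{g}$, so $\chi$ factors through the finite reductive quotient; the latter genuinely admits non-trivial order-$2$ characters in general, so the triviality of $\chi$ is not automatic and needs to be verified. For $\SL_{2}(F)$ it suffices to check triviality on a torus representative: we reduce to a standard double coset representative $g = \mathrm{diag}(\varpi^{n}, \varpi^{-n})$ in the case $K = K_{1}$ (the case $K = K_{2}$ is handled by the same argument after $\GL_{2}$-conjugation). For $t = \mathrm{diag}(a, a^{-1}) \in K \cap K^{g}$ with $a \in \mathcal{O}_{F}^{\times}$, the commutation relation $(3)$ gives
\[
[\tilde{t}, \tilde{g}] = (a, \varpi^{-n})_{F} (\varpi^{n}, a^{-1})_{F} = (a, \varpi)_{F}^{2n} = 1 \in \mu_2,
\]
so $\tilde{t}$ and $\tilde{g}$ commute in $\widetilde{G}$. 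Since $g t g^{-1} = t$, this yields $s^{g}(t) = \tilde{g}^{-1} s(t) \tilde{g} = s(t)$, and therefore $\chi(t) = 1$. This forces $\chi = 1$ on the torus, hence on the whole reductive quotient, and hence on all of $K \cap K^{g}$, completing the argument.
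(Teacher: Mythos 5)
Your proposal is correct and follows essentially the same route as the paper: the paper likewise reduces the lemma to showing $s|_{K\cap K^g}=s^g|_{K\cap K^g}$ (its Lemma on invariant splittings), notes the difference is a quadratic character of $\Gamma_0(\varpi^{2n})$ which, for $p$ odd, factors through $(\mathcal{O}_F/\varpi^{2n})^\times$ and so is detected on diagonal elements, and then kills it there using that lifts of diagonal elements commute. Your only deviation is cosmetic: you verify the commuting of $\tilde t$ and $\tilde g$ by the explicit Hilbert-symbol computation from relation (3), whereas the paper quotes the (equivalent) fact that the inverse image of the diagonal torus of $\SL_2(F)$ is abelian.
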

\begin{proof}
Write $\tilde{K} \cong K \times_{s} \mu_{2}$ to emphasize the dependence of the isomorphism on the splitting $s$. 
Recall $\tilde{\sigma} = \sigma \otimes {\rm sign}$.
Note that any representative of the double coset $g \in K \backslash G /K$ can be regarded as a representative in $\tilde{K} \backslash \tilde{G} / \tilde{K}$. 

If we know that the two isomorphisms $\tilde{K} \cap \tilde{K}^{g} \xrightarrow{\sim} (K \cap K^{g}) \times_{s} \mu_{2}$ and $\tilde{K} \cap \tilde{K}^{g} \xrightarrow{\sim} (K \cap K^{g}) \times_{s^{g}} \mu_{2}$ are the same then we have the following
\[
\tilde{\sigma}|_{\tilde{K} \cap \tilde{K}^{g}} \cong \sigma|_{K \cap K^{g}} \otimes_{s} {\rm sign} \text{ and } \tilde{\sigma}^{g}|_{\tilde{K} \cap \tilde{K}^{g}} \cong \sigma^{g}|_{K \cap K^{g}} \otimes_{s^{g}} {\rm sign}.
\]
Thus we have
\[
\begin{array}{lcl}
 \Hom_{\tilde{K} \cap \tilde{K}^{g}} (\tilde{\sigma}|_{\tilde{K} \cap \tilde{K}^{g}}, \tilde{\sigma}^{g}|_{\tilde{K} \cap \tilde{K}^{g}}) & \cong & \Hom_{(K \cap K^{g}) \times \mu_{2}} (\sigma|_{K \cap K^{g}} \otimes {\rm sign}, \sigma^{g}|_{K \cap K^{g}} \otimes {\rm sign}) \\
 & \cong  & \Hom_{K \cap K^{g}} (\sigma|_{K \cap K^{g}}, \sigma^{g}|_{K \cap K^{g}}).
\end{array} 
\]
It remains to prove the following innocuous looking, but crucial lemma.
\end{proof}

The following lemma  uses that the covering group is the
two fold cover of $\SL_2(F)$ since it crucially uses the fact that the inverse image in 
$\widetilde{\SL}_{2}(F)$ 
of the split torus in ${\SL}_{2}(F)$ is abelian, a property which is shared by all metaplectic covers, i.e.,
two fold covers of $\Sp_{2n}(F)$, but is not shared by general covers of general reductive groups. 

\begin{lemma} \label{invariant splitting}
Let $s: K \hookrightarrow \widetilde{\SL}_{2}(F)$ 
be a splitting and $g \in \SL_{2}(F)$.
Let $s^{g} : K^{g} \hookrightarrow \widetilde{\SL}_{2}(F)$ be the splitting of $K^{g}$ given by $s^{g}(k^{g}) = s(k)^{\tilde{g}}$ where $\tilde{g}$ is any lift of $g$ in $\widetilde{\SL}_{2}(F)$.
Then
\[
s|_{K \cap K^{g}} = s^{g}|_{K \cap K^{g}}.
\]
\end{lemma}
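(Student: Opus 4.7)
The plan is to reduce the claim to showing that a certain homomorphism $\chi \colon K \cap K^g \to \mu_2$, measuring the failure of $s$ and $s^g$ to agree, is identically trivial. Define $\chi(h) := s^g(h)\, s(h)^{-1}$ for $h \in K \cap K^g$, where both sides are viewed inside $\widetilde{\SL}_2(F)$. Since both $s$ and $s^g$ are splittings of the covering, $\chi(h) \in \mu_2$; one checks directly that $\chi$ is a continuous homomorphism and that it does not depend on the choice of lift $\tilde{g}$, because changing $\tilde{g}$ by the central element $-1$ has no effect on the conjugation defining $s^g$.

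First I would reduce to the case $g = \mathrm{diag}(\varpi^n, \varpi^{-n})$ with $n \geq 0$. By the Cartan decomposition $\SL_2(F) = K \cdot T_0^+ \cdot K$, any $g$ factors as $g = k_1 g_0 k_2$ with $g_0$ of this form. Taking $\tilde{g} = s(k_1)\tilde{g_0} s(k_2)$ and unwinding definitions, one has $K \cap K^g = k_2^{-1}(K \cap K^{g_0}) k_2$, and the identity $s^g(k_2^{-1} h k_2) = s(k_2)^{-1} s^{g_0}(h) s(k_2)$ shows that the conclusion for $g_0$ transports to $g$. The case $n = 0$ is immediate: if $g \in K$, then $s^g(k^g) = s(g)^{-1} s(k) s(g) = s(g^{-1} k g)$ by the homomorphism property of $s$. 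The case $K = K_2$ is handled identically after transferring to $K_1$ by conjugation by $\mathrm{diag}(\varpi, 1) \in \GL_2(F)$, which acts on $\widetilde{\SL}_2(F)$ via the semidirect product structure of $\widetilde{\GL}_2(F)$.

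Assume therefore that $g = \mathrm{diag}(\varpi^n, \varpi^{-n})$ with $n \geq 1$. A short direct calculation yields the Iwahori-type decomposition
$$K \cap K^g = U^-(\varpi^{2n}\mathcal{O}_F) \cdot T_0(\mathcal{O}_F) \cdot U^+(\mathcal{O}_F),$$
so it suffices to verify that $\chi$ vanishes on each of the three factors. On the two unipotent factors the vanishing is free: each is additively isomorphic to an open subgroup of $\mathcal{O}_F$, hence a pro-$p$ group, and since $p$ is odd there are no nontrivial continuous homomorphisms from such a group to $\mu_2$. The delicate step, and the one where the metaplectic nature of the cover is essential, is the vanishing of $\chi$ on $T_0(\mathcal{O}_F)$: for a diagonal $h$ one has $g h g^{-1} = h$, while both $\tilde{g}$ and $s(h)$ lie in the inverse image of the split torus $T_0(F) \subset \SL_2(F)$, which is \emph{abelian} inside $\widetilde{\SL}_2(F)$. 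Consequently $\tilde{g}^{-1} s(h) \tilde{g} = s(h)$, giving $s^g(h) = s(h)$ and hence $\chi(h) = 1$. The main obstacle, as the remark immediately preceding the lemma flags, is precisely this torus step: everything else would work for a general two-fold cover of a reductive group, but only for metaplectic covers of symplectic groups is the inverse image of the split torus abelian, which is exactly what is needed.
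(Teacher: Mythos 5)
Your proof is correct and is essentially the paper's own argument: reduce to $K=\SL_2(\mathcal{O}_F)$ and to the diagonal double-coset representatives $\mathrm{diag}(\varpi^n,\varpi^{-n})$, note that the discrepancy $\chi$ is a quadratic character of $K\cap K^g=\Gamma_0(\varpi^{2n})$, kill it on pro-$p$ pieces using $p\neq 2$, and kill it on diagonal elements using that the inverse image $\tilde{T}_0$ of the split torus is abelian. The only (cosmetic) difference is that you dispose of the non-diagonal part via the Iwahori factorization $U^-(\varpi^{2n}\mathcal{O}_F)\,T_0(\mathcal{O}_F)\,U^+(\mathcal{O}_F)$, whereas the paper argues that a quadratic character of $\Gamma_0(\varpi^{2n})$ factors through the quotient $(\mathcal{O}_F/\varpi^{2n})^{\times}$ of $\Gamma_0(\varpi^{2n})$ by the pro-$p$ subgroup $\Gamma_1(\varpi^{2n})$ and hence is determined by its values on diagonal elements.
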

\begin{proof}
Note that:\\
\noindent (1) It is enough to prove the lemma for $K=\SL_{2}(\mathcal{O}_{F})$. \\
\noindent (2) It is enough to prove the lemma for $g \in \SL_{2}(F)$ which are a set of 
representatives of the double cosets of $K$ in $\SL_{2}(F)$. These representatives of the double coset of $K \backslash \SL_{2}(F)/ K$ can be taken to be $\underline{a}_{n} := \left( \begin{matrix} \varpi^{n} & 0 \\ 0 & \varpi^{-n} \end{matrix} \right)$ for $n \in \Z_{\geq 0}$. \\
Note that the restriction of the two splittings $s$ and $s^{g}$ on $K \cap K^{g}$ differ by a character of $K \cap K^{g}$ with values in $\{ \pm 1 \}$, i.e., a quadratic character.
Our aim is to prove that this character is trivial.
This character on $K \cap K^{g}$ is given by
\[
k \mapsto s(k^{-1}) s^g(k)=s(k)^{-1} \tilde{g} s(k^{g^{-1}}) \tilde{g}^{-1} \in \{ \pm 1 \}.
\]
Let us write 
\[
\Gamma_{0}(m) := \left\{ \left( \begin{matrix} a & b \\ c & d \end{matrix} \right) \in \SL_{2}(\mathcal{O}_{F}) : c \in \varpi^{m} \mathcal{O}_{F} \right\}  \text{ if } m \in \Z_{\geq 0} \\
\]
Then, for $g = \underline{a}_{n}$, the intersection $K \cap K^{g} = \Gamma_{0}(\varpi^{2n})$.
For $n=0$, there is nothing to prove. 
Now we assume $n \neq 0$ and, then $\Gamma_{0}(\varpi^{2n})$ 
has a normal pro-$p$ subgroup $\Gamma_{1}(\varpi^{2n})$ with quotient isomorphic to $(\mathcal{O}_{F}/\varpi^{2n})^{\times}$.
Since $p \neq 2$, a quadratic character of $K \cap K^{g} = \Gamma_{0}(\varpi^{2n})$ will factor through a quadratic character of $(\mathcal{O}_{F}/\varpi^{2n})^{\times}$. 
%This group has unique non-trivial quadratic character.
Note that  this quadratic character of $\Gamma_{0}(\varpi^{2n})$ 
is trivial if it is trivial on the diagonal elements of  $\Gamma_{0}(\varpi^{2n})$.
Since the inverse image of the diagonal torus of $\SL_{2}(F)$ is abelian, the element 
$s(k)^{-1} \tilde{g} s(k^{g^{-1}}) \tilde{g}^{-1} = 
s(k)^{-1} \tilde{g} s(k) \tilde{g}^{-1} $ is trivial for all diagonal $k$.
Therefore, the map $k \mapsto s(k)^{-1} \tilde{g} s(k^{g^{-1}}) \tilde{g}^{-1}$ is trivial.
\end{proof}

\begin{remark}
Since splitting
$s: K \hookrightarrow \widetilde{\SL}_{2}(F)$ is unique, there is a natural way to write 
$\tilde{K} = s(K) \times \{\pm 1 \} = K \times \{ \pm 1 \}$. 
Similarly, $\tilde{K}^g = K^g \times \{ \pm 1 \}$.  An equivalent way to state the previous lemma 
would be that inside $\widetilde{\SL}_2(F)$, 
$( K \times \{ \pm 1 \} ) \cap (K^g \times \{ \pm 1 \}) = (K \cap K^g) \times \{\pm 1 \}$. 
\end{remark}

The correspondence defined in Theorem \ref{SL2 correspondence} has the following striking
property.

\begin{proposition} \label{same K-types}
Let $\tilde{\pi}$ be an irreducible supercuspidal
representation of $\widetilde{\SL}_{2}(F)$ and $\pi = P(\tilde{\pi})$ be the corresponding
supercuspidal representation of $\SL_2(F)$.
Then
\[
\tilde{\pi}|_{K} \cong \pi|_{K}.
\]
\end{proposition}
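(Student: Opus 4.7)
The plan is to compute both restrictions by Mackey theory and match them term-by-term, with the key input being Lemma \ref{invariant splitting} (precisely the content already used in Lemma \ref{Mackey theory for correspondence}).

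By construction, $\tilde{\pi} = \ind_{\tilde{K}}^{\tilde{G}} \tilde{\sigma}$ and $\pi = \ind_{K}^{G} \sigma$, where under the unique splitting $s : K \hookrightarrow \tilde{G}$ we have $\tilde{K} \cong K \times_s \mu_2$ and $\tilde{\sigma} \cong \sigma \otimes \mathrm{sign}$. Applying Mackey's restriction-induction formula to each side gives
\[
\pi|_K \;\cong\; \bigoplus_{g \in K\backslash G/K} \ind_{K \cap K^g}^{K} \bigl(\sigma^g|_{K \cap K^g}\bigr),
\qquad
\tilde{\pi}|_{\tilde{K}} \;\cong\; \bigoplus_{\tilde{g} \in \tilde{K}\backslash \tilde{G}/\tilde{K}} \ind_{\tilde{K} \cap \tilde{K}^{\tilde{g}}}^{\tilde{K}} \bigl(\tilde{\sigma}^{\tilde{g}}|_{\tilde{K} \cap \tilde{K}^{\tilde{g}}}\bigr).
\]
As already observed in the proof of Theorem \ref{SL2 correspondence}, the double coset spaces $\tilde{K} \backslash \tilde{G}/\tilde{K}$ and $K \backslash G/K$ are naturally in bijection via the covering map, so one may index both sums by the same set of representatives $g \in K\backslash G/K$.

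Next I would match the summands. Fix a representative $g$ and let $\tilde{g} \in \tilde{G}$ be any lift. The representation $\tilde{\sigma}^{\tilde{g}}$ lives on $\tilde{K}^g$, which under the splitting $s^g$ (as in Lemma \ref{invariant splitting}) is identified with $K^g \times \mu_2$, and corresponds to $\sigma^g \otimes \mathrm{sign}$. The crucial point is that on the intersection $K \cap K^g$ the two splittings $s$ and $s^g$ agree (Lemma \ref{invariant splitting}), so
\[
\tilde{K} \cap \tilde{K}^{\tilde{g}} \;=\; s(K \cap K^g) \times \mu_2,
\]
and \emph{both} $\tilde{\sigma}|_{\tilde{K}\cap \tilde{K}^{\tilde{g}}}$ and $\tilde{\sigma}^{\tilde{g}}|_{\tilde{K}\cap \tilde{K}^{\tilde{g}}}$ have the form $(\,\cdot\,)\otimes \mathrm{sign}$ with respect to the \emph{same} splitting. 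Pulling $\ind_{\tilde{K}\cap \tilde{K}^{\tilde{g}}}^{\tilde{K}}(\tilde{\sigma}^{\tilde{g}}|_{\tilde{K}\cap \tilde{K}^{\tilde{g}}})$ back along $s: K \hookrightarrow \tilde{K}$ therefore yields exactly $\ind_{K \cap K^g}^{K}(\sigma^g|_{K \cap K^g})$. Summing over double cosets gives $\tilde{\pi}|_K \cong \pi|_K$.

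The only non-formal step is the identification of the splittings on the intersections; this is exactly Lemma \ref{invariant splitting}, which has already been established and is the same input needed in Lemma \ref{Mackey theory for correspondence}. With that in hand, the rest is bookkeeping: matching double coset representatives, using the bijection $\Irrep(K)\leftrightarrow \Irrep_{\mathrm{gen}}(\tilde{K})$ on each piece, and observing that the $\mathrm{sign}$ twist is consistent across the intersection so that restriction via $s$ kills it uniformly. No new obstacle arises.
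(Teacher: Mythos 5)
Your argument is correct, and it is precisely the argument the paper leaves implicit: the proposition is stated without proof as a consequence of the construction of $P$, and the intended justification is exactly your Mackey-theoretic term-by-term comparison resting on Lemma \ref{invariant splitting}, the same ingredients already deployed in Theorem \ref{SL2 correspondence} and Lemma \ref{Mackey theory for correspondence}. The only point worth flagging is that Lemma \ref{invariant splitting} is proved for $K=\SL_{2}(\mathcal{O}_{F})$ and diagonal double-coset representatives, so for $K=K_{2}$ one transfers it by conjugation, which your bookkeeping accommodates without change.
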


\begin{remark} As mentioned 
before Lemma \ref{invariant splitting},
all the results of this section (in particular, Theorem \ref{SL2 correspondence} and Proposition \ref{same K-types}) are valid for
the two fold metaplectic cover of $\Sp_{2n}(F)$ for $F$ of odd residue characteristic.
\end{remark}

\begin{remark} If $p \not = 2$, the theorem on compact induction of irreducible supercuspidal representations of $\SL_2(F)$ as 
$\ind_K^{\SL_2(F)} (\sigma)$ 
allows one to construct an irreducible supercuspidal representation of $\widetilde{\SL}_2(F)$ as
$\ind_{K \times \mu_{2}}^{\widetilde{\SL}_2(F)}(\sigma \otimes {\rm sign})$. 
It is natural to expect that this way we have constructed
all irreducible supercuspidal representation of $\widetilde{\SL}_2(F)$ --- which indeed would be a consequence of the theorem of Manderscheid, i.e. Proposition \ref {compact correpondence} --- although we would like to think that it is a consequence of generalities 
(some kind of Plancherel theorem because their numbers and formal degrees are the same). The advantage of this method would be that it would be a much more
general method of proving the theorem on compact induction of irreducible supercuspidal representations of other covering
 groups such as the two fold cover of $\Sp_{2n}(F)$. (It is known that the maximal reductive quotient of any maximal
compact subgroup of $\Sp_{2n}(F)$ 
is of the form $\Sp_{2\ell} \times \Sp_{2 m}$, in particular is simply connected, so the metaplectic covering of 
$\Sp_{2n}(F)$ 
when restricted to any maximal compact subgroup of $\Sp_{2n}(F)$ splits.) To be sure, our argument on irreducibility 
of $\ind_{K \times \mu_{2}}^{\widetilde{\SL}_2(F)}(\sigma \otimes {\rm sign})$ starting with the irreducible representation 
$\ind_K^{\SL_2(F)} (\sigma)$ of $\SL_2(F)$ works only for $K$ a hyperspecial maximal compact subgroup of $\Sp_{2n}(F)$.
\end{remark}
\section{Restriction of supercuspidal representations}

\subsection{Restriction of supercuspidal representations of $\widetilde{\SL}_{2}(F)$ to $\tilde{E}^{1}$} \label{transfer from metaplectic SL2 to SL2}
In this subsection we study the restriction of an irreducible genuine supercuspidal representation of $\widetilde{\SL}_{2}(F)$ to a non-split torus. 
For any quadratic field extension $E/F$, let $E^{1} := \{ x \in E^{\times} : \Nm x=1 \}$ where $\Nm $ denotes the norm of the quadratic extension.
We fix an embedding $E^{1} \hookrightarrow \SL_{2}(F)$ and we write $E^1$ for the non-split torus of $\SL_{2}(F)$ determined by the field extension $E/F$. 
Let $\tilde{E}^{1}$ denote the inverse image of $E^1$ in the twofold cover $\widetilde{\SL}_{2}(F)$.
\begin{lemma}
The group $\tilde{E}^{1}$ is abelian.
\end{lemma}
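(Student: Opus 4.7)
The plan is to realize $\tilde{E}^1$ as a subgroup of the already-understood group $\tilde{E}^\times \subset \widetilde{\GL}_2(F)$ and read off commutativity from results already established. Since the embedding of the metaplectic cover $\widetilde{\SL}_2(F)$ into $\widetilde{\GL}_2(F)$ coming from the semidirect product decomposition of Section 2 identifies the inverse image of $E^1 \subset \SL_2(F)$ with a subgroup of the inverse image of $E^\times \subset \GL_2(F)$, I will work directly inside $\tilde{E}^\times$.

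First I would invoke Lemma \ref{KP commutator}: for any $a,b \in E^\times$ with lifts $\tilde{a}, \tilde{b}$,
\[
[\tilde{a}, \tilde{b}] = (a,b)_E\,(\Nm a, \Nm b)_F.
\]
When $a, b \in E^1$ we have $\Nm a = \Nm b = 1$, so the second factor is trivial and the commutator reduces to $(a,b)_E$. Thus everything comes down to showing $(a,b)_E = 1$ for all $a,b \in E^1$.

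For this, I would use the parenthetical observation already made in the proof of Lemma \ref{group structure E}: by Hilbert 90 every $a \in E^1$ has the form $a = c/\bar{c} = c^2/\Nm c$ with $c \in E^\times$, so $E^1 \subset F^\times E^{\times 2}$. Consequently $\tilde{E}^1 \subset \tilde{F}^\times \tilde{E}^{\times 2}$, and by Lemma \ref{group structure E} the latter is a (maximal) abelian subgroup of $\tilde{E}^\times$. Hence $\tilde{E}^1$ is abelian. Equivalently, one could verify $(a,b)_E = 1$ directly by expanding $(c^2/\Nm c,\, d^2/\Nm d)_E$ using bilinearity: the terms involving a square in either slot die, and the remaining $(\Nm c, \Nm d)_E$ equals $(\Nm(\Nm c), \Nm d)_F$ by Lemma \ref{Hilbert symbol E/F}, which is trivial because $\Nm(\Nm c) = (\Nm c)^2$.

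There is essentially no obstacle here; the only thing one must be slightly careful about is that the commutator formula of Lemma \ref{KP commutator} was stated inside $\widetilde{\GL}_2(F)$, while $\tilde{E}^1$ is defined as the inverse image inside $\widetilde{\SL}_2(F)$. This is a non-issue because the metaplectic cover of $\SL_2(F)$ sits inside the chosen cover of $\GL_2(F)$, so the two notions of $\tilde{E}^1$ coincide and the commutator computation is the same.
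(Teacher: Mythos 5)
Your proposal is correct and follows essentially the same route as the paper: the paper's proof is exactly the observation that $E^1 \subset F^\times E^{\times 2}$ together with Lemma \ref{group structure E}, which makes $\tilde{E}^1$ a subgroup of the abelian group $\tilde{F}^\times \tilde{E}^{\times 2}$. The extra direct Hilbert-symbol computation you include is a fine consistency check but not needed.
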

\begin{proof} We already know that $E^1 \subset F^\times E^{\times 2}$, and $\tilde{F}^\times \tilde{E}^{\times 2}$ is a maximal abelian subgroup of $\tilde{E}^\times$, so the lemma follows.
\end{proof}

Given a quadratic extension $E/F$, $E^\times$ operates on the two dimensional vector space $E$ over $F$, with $E^1$
leaving stable the maximal compact subring of $E$, thus $E^1 \subset K$, for $K$ a maximal compact subgroup of 
$\SL_2(F)$. 
For $p \neq 2$,  we know that the twofold cover $\widetilde{\SL}_{2}(F)$ splits over $K$ and hence over 
$E^{1}$ giving rise to an isomorphism $\tilde{E}^{1} \cong E^{1} \times \{ \pm 1 \}$.
For any genuine character $\tilde{\nu}$ of $\tilde{E}^{1}$, we associate a character $\nu$ of $E^{1}$ such that $\tilde{\nu} = \nu \otimes {\rm sign}$.

The following result is a consequence of Proposition \ref{same K-types}.

\begin{proposition} \label{covering SL2 to linear SL2}
Let the residue characteristic of $F$ be odd. 
Let $\tilde{\pi}$ be an irreducible genuine supercuspidal representation of $\widetilde{\SL}_{2}(F)$ and $\tilde{\nu}$ a genuine character of $\tilde{E}^{1}$.
Let $\pi = P(\tilde{\pi})$ and $\nu$ the corresponding character of $E^{1}$.
Then there is a natural isomorphism:
\[
\Hom_{\tilde{E}^{1}} (\tilde{\pi}, \tilde{\nu}) \cong \Hom_{E^{1}} (\pi, \nu).
\]
\end{proposition}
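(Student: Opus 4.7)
The plan is to transfer everything through a maximal compact subgroup. First I would observe that $E^1$ is a compact subgroup of $\SL_2(F)$: the norm map $\Nm: E^\times \to F^\times$ has image the subgroup of norms (of index $2$), and the kernel $E^1$ is compact since $E/F$ is a quadratic extension of local fields. Therefore $E^1$ sits inside some maximal compact subgroup $K$ of $\SL_2(F)$ (in fact one can take $K$ to be the stabiliser of the lattice $\mathcal{O}_E \subset E$). Since the residue characteristic is odd, Lemma \ref{splitting of K} gives a splitting $s: K \hookrightarrow \widetilde{\SL}_2(F)$, and as noted in the proof of the proposition in Section \ref{correspondence}, this splitting is unique.

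Next I would unwind the identifications. The splitting $s$ provides an isomorphism $\tilde{K} \cong K \times \mu_2$, and by restriction an isomorphism $\tilde{E}^1 \cong E^1 \times \mu_2$ (this restriction being consistent because of the uniqueness assertion of Lemma \ref{invariant splitting}, applied with $g = 1$). Under this identification, by definition, the genuine character $\tilde{\nu}$ of $\tilde{E}^1$ corresponds to $\nu \otimes \mathrm{sign}$, where $\mathrm{sign}$ is the nontrivial character of $\mu_2$. Similarly, by Proposition \ref{same K-types}, the restriction $\tilde{\pi}|_{\tilde{K}}$ corresponds under the identification $\tilde{K} \cong K \times \mu_2$ to $\pi|_K \otimes \mathrm{sign}$; restricting further to $E^1 \subset K$, the representation $\tilde{\pi}|_{\tilde{E}^1}$ corresponds to $\pi|_{E^1} \otimes \mathrm{sign}$.

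With these identifications, the computation is immediate:
\[
\Hom_{\tilde{E}^1}(\tilde{\pi}, \tilde{\nu})
\;\cong\; \Hom_{E^1 \times \mu_2}\bigl(\pi|_{E^1} \otimes \mathrm{sign},\; \nu \otimes \mathrm{sign}\bigr)
\;\cong\; \Hom_{E^1}(\pi, \nu),
\]
since the $\mu_2$-factor cancels on both sides. This yields the asserted natural isomorphism.

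The only delicate point, and the place where the hypothesis on the residue characteristic is genuinely used, is the compatibility of splittings. One must know both that a splitting of $\tilde{K} \to K$ exists and is unique, and that its restriction to $\tilde{E}^1 \to E^1$ is the splitting implicit in the statement of the proposition (the one via which $\tilde{\nu} = \nu \otimes \mathrm{sign}$ is defined). Uniqueness of splitting over $K$, as explained in Section \ref{correspondence}, forces both restrictions to any subgroup to agree, so there is no ambiguity; this is essentially the content of Lemma \ref{invariant splitting}. Once this bookkeeping is in place the proposition reduces to the formal identity above, with no further work required.
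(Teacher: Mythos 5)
Your argument is correct and is essentially the paper's own: the paper likewise deduces the proposition from $E^{1}\subset K$, the (unique) splitting of the cover over $K$ identifying $\tilde{\nu}$ with $\nu\otimes{\rm sign}$, and Proposition \ref{same K-types} giving $\tilde{\pi}|_{K}\cong\pi|_{K}$, after which the $\Hom$-space identification is formal. Your bookkeeping of the splittings (including the appeal to uniqueness over $K$) matches the intended justification, so there is nothing substantive to add.
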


In the odd residue characteristic case, Proposition \ref{covering SL2 to linear SL2} enables one to transfer the question of restriction of a supercuspidal representation of $\widetilde{\SL}_{2}(F)$ to $\tilde{E}^{1}$ to a similar question of restriction of a supercuspidal representation of $\SL_{2}(F)$ to $E^{1}$.

\subsection{Restriction of supercuspidal representations of $\widetilde{\GL}_{2}(F)$ to $\tilde{E}^{\times}$}

In 
Proposition \ref{covering SL2 to linear SL2}, we have transferred the restriction problem on covering groups to another restriction problem on linear groups where it is better understood.
Since we are interested in understanding the restriction of an irreducible genuine supercuspidal representation of $\widetilde{\GL}_{2}(F)$ to $\tilde{E}^{\times}$, we  now transfer this question to a related question of restriction of an irreducible genuine supercuspidal representation of $\widetilde{\SL}_{2}(F)$ to $\tilde{E}^{1}$.
We do this without any assumption on the residue characteristic of $F$.

\begin{proposition} \label{metaplectic GL2 to metaplectic SL2}
Let $\tilde{\sigma}$ be an irreducible admissible genuine representation of $\widetilde{\SL}_{2}(F)$ and $\tilde{\nu}$ a genuine character of $\tilde{E}^{1}$.
Let $\mu$ be a character of $\tilde{Z}$ such that the restriction of $\mu$ to the center of $\widetilde{\SL}_{2}(F)$ is the central character of $\tilde{\sigma}$ and 
\[
\tilde{\pi} = \ind_{\tilde{Z} \widetilde{\SL}_{2}(F)}^{\widetilde{\GL}_{2}(F)} \mu \tilde{\sigma} 
\cong \ind_{\tilde{Z} \widetilde{\SL}_{2}(F)}^{\widetilde{\GL}_{2}(F)} \mu^{a} \tilde{\sigma}^{a}.
\]
Let $\lambda$ be a genuine character of $\tilde{F}^{\times} \tilde{E}^{\times 2}$ such that $\lambda|_{\tilde{F}^{\times}} = \mu$ and $\lambda|_{\tilde{E}^{1}} = \tilde{\nu}$. 
Let us write $\tilde{\chi} = \ind_{\tilde{F}^{\times} \tilde{E}^{\times 2}}^{\tilde{E}^{\times}} \lambda$.
Then
\[
\Hom_{\tilde{E}^{\times}} (\tilde{\pi}, \tilde{\chi}) \cong \Hom_{\tilde{E}^{1}} (\tilde{\sigma}, \tilde{\nu}).
\]
\end{proposition}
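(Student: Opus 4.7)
The plan is to apply Frobenius reciprocity and then a Mackey decomposition, using the conjugation action of $\widetilde{\GL}_2(F)$ on the abelian subgroup $\tilde{F}^\times$ to collapse the resulting sum to a single summand. First, applying Frobenius reciprocity to $\tilde{\chi} = \ind_L^{\tilde{E}^\times}\lambda$, with $L := \tilde{F}^\times \tilde{E}^{\times 2}$, gives
\[
\Hom_{\tilde{E}^\times}(\tilde{\pi}, \tilde{\chi}) \cong \Hom_L(\tilde{\pi}|_L, \lambda).
\]
I would then note the containment $L \subseteq H := \tilde{Z}\widetilde{\SL}_2(F)$: we have $\tilde{F}^\times = \tilde{Z} \subset H$, and any square $e^2 \in E^{\times 2}$ has $\det(e^2) = (\Nm e)^2 \in F^{\times 2}$. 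Moreover, using $E^1 \subset F^\times E^{\times 2}$, one rewrites $L = \tilde{F}^\times \tilde{E}^1$; in particular $(\mu\tilde{\sigma})|_L$ matches $\mu$ on $\tilde{F}^\times$ and $\tilde{\sigma}|_{\tilde{E}^1}$ on $\tilde{E}^1$, while $\lambda|_{\tilde{F}^\times} = \mu$ and $\lambda|_{\tilde{E}^1} = \tilde{\nu}$ by hypothesis.

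Next, since $H$ is normal of finite index in $G := \widetilde{\GL}_2(F)$, Mackey theory yields
\[
\tilde{\pi}|_L = \bigoplus_{b \in G/H} (\mu\tilde{\sigma})^b\big|_L,
\qquad
\Hom_L(\tilde{\pi}|_L, \lambda) = \bigoplus_{b \in G/H} \Hom_L\bigl((\mu\tilde{\sigma})^b, \lambda\bigr).
\]
The key step is to show that only the $b = 1$ term is non-zero. For this I would compute the conjugation action of $G$ on $\tilde{F}^\times$. Starting from Lemma \ref{KP commutator} and noting that, for a fixed $\tilde{f} \in \tilde{F}^\times$, the commutator $[\tilde{b}, \tilde{f}]$ is a homomorphism in $b \in \GL_2(F)$ with values in $\mu_2$ and so factors through the abelianization $\det : \GL_2(F) \twoheadrightarrow F^\times$, I would derive the formula
\[
\tilde{b}\tilde{f}\tilde{b}^{-1} = (\det b, f)_F \cdot \tilde{f}
\qquad \text{for all } \tilde{b} \in G,\ \tilde{f} \in \tilde{F}^\times.
\]
Consequently $(\mu\tilde{\sigma})^b$ restricted to $\tilde{F}^\times$ acts by the character $(\det b, \cdot)_F \cdot \mu$ (with multiplicity $\dim \tilde{\sigma}$), and this matches $\lambda|_{\tilde{F}^\times} = \mu$ if and only if $\det b \in F^{\times 2}$. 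Since $G/H \cong F^\times/F^{\times 2}$ via the determinant, all non-trivial cosets contribute zero.

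For the surviving $b = 1$ term, the character match on $\tilde{F}^\times$ is automatic, and the Hom reduces via the description $L = \tilde{F}^\times \tilde{E}^1$ to
\[
\Hom_L(\mu\tilde{\sigma}|_L, \lambda) = \Hom_{\tilde{E}^1}(\tilde{\sigma}, \tilde{\nu}),
\]
giving the desired isomorphism. The main technical obstacle is establishing the commutator formula $\tilde{b}\tilde{f}\tilde{b}^{-1} = (\det b, f)_F \tilde{f}$ for arbitrary $\tilde{b} \in \widetilde{\GL}_2(F)$, since the explicit computations in the paper cover only the torus case and the elements of $\tilde{E}^\times$ via Lemma \ref{KP commutator}; the abelianization argument sketched above, combined with the known case of diagonal elements, should bridge this gap. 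Once this step is in place, the remainder is a direct unwinding of Frobenius reciprocity.
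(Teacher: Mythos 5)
Your proposal is correct and follows essentially the same route as the paper: Frobenius reciprocity to $\tilde{F}^{\times}\tilde{E}^{\times 2}=\tilde{F}^{\times}\tilde{E}^{1}\subseteq \tilde{Z}\widetilde{\SL}_2(F)$, the decomposition $\tilde{\pi}|_{\tilde{Z}\widetilde{\SL}_2(F)}\cong \bigoplus_{a\in F^{\times}/F^{\times 2}}\mu^{a}\tilde{\sigma}^{a}$, and the observation that only the coset with $\mu^{a}=\mu$ (equivalently $a\in F^{\times 2}$) can map to $\lambda$ since $\lambda|_{\tilde{F}^{\times}}=\mu$. The only difference is one of packaging: the paper cites \cite{PP16} for the restriction formula and uses the known fact $\mu^{a}=\mu \iff a\in F^{\times 2}$, whereas you rederive the decomposition by Mackey theory for the normal finite-index subgroup and prove the conjugation formula $[\tilde{b},\tilde{f}]=(\det b,f)_F$ via the abelianization argument, which is a correct (and slightly more self-contained) justification of the same steps.
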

\begin{proof}
First observe that: ${F}^{\times} {E}^{\times 2} = {F}^{\times} {E}^{1}$
(since for $e \in E^{\times}$, $e^{2} = (e \bar{e})(\frac{e}{\bar{e}}) \in F^{\times} E^{1}$, so 
${F}^{\times} {E}^{\times 2} \subset {F}^{\times} {E}^{1}$; further,
 $\frac{e}{\bar{e}} = \frac{e^2}{e \bar{e}} \in F^{\times} E^{\times 2}$, so 
${F}^{\times} {E}^{\times 2} \supset {F}^{\times} {E}^{1}$). 
Therefore $\tilde{F}^{\times} \tilde{E}^{\times 2}= {F}^{\times} {E}^{1} \subseteq \tilde{Z} \widetilde{\SL}_{2}(F)$.
From \cite{PP16},
recall  the following
\[
\tilde{\pi}|_{\tilde{Z} \widetilde{\SL}_{2}(F)} \cong \bigoplus_{a \in F^{\times}/ F^{\times 2}} \mu^{a} \tilde{\sigma}^{a}.
\]
Using Frobenius reciprocity, we get
\[
\begin{array}{lcl}
\Hom_{\tilde{E}^{\times}} (\tilde{\pi}, \tilde{\chi}) & \cong & \Hom_{\tilde{F}^{\times} \tilde{E}^{\times 2}} (\tilde{\pi}, \lambda) \\
 &=& \bigoplus_{a \in F^{\times}/F^{\times 2}} \Hom_{\tilde{F}^{\times} \tilde{E}^{1}} (\mu^{a} \tilde{\sigma}^{a}, \lambda).
\end{array}
\]
Recall that $\mu = \mu^{a}$ if and only if $a \in F^{\times 2}$. 
We are assuming that $\lambda|_{\tilde{F}^{\times}} = \mu$ and $\lambda|_{\tilde{E}^{1}} = \tilde{\nu}$, therefore we get
\[
\begin{array}{lcl}
\Hom_{\tilde{E}^{\times}} (\tilde{\pi}, \tilde{\chi}) 
& \cong & \Hom_{\tilde{F}^{\times} \tilde{E}^{1}} (\mu \tilde{\sigma}, \lambda) \\
&=& \Hom_{\tilde{E}^{1}} (\tilde{\sigma}, \tilde{\nu}). 
\end{array} 
\]
This completes the proof of the proposition.
\end{proof}

\begin{remark}
It can be easily seen that 
for a given irreducible admissible genuine representation $\tilde{\pi}$ of $\widetilde{\GL}_{2}(F)$ 
and an irreducible genuine representation $\tilde{\chi}$ of $\tilde{E}^{\times}$ 
with $\Hom_{\tilde{E}^{\times}} (\tilde{\pi}, \tilde{\chi}) \not = 0$, 
one can make suitable choices for an irreducible genuine supercuspidal representation $\tilde{\sigma}$ of $\widetilde{\SL}_{2}(F)$ and a genuine character $\tilde{\nu}$ of $\tilde{E}^{1}$ which satisfies 
the conditions in Proposition \ref{metaplectic GL2 to metaplectic SL2}. It follows from \cite{PP16} 
that any
irreducible genuine representation $\tilde{\pi}$ of $\tilde{\GL}_2(F)$ is of the form 
$\tilde{\pi} = \ind_{\tilde{Z} \widetilde{\SL}_{2}(F)}^{\widetilde{\GL}_{2}(F)} \mu \tilde{\sigma}$ used in proposition \ref{metaplectic GL2 to metaplectic SL2}.

\end{remark}

\end{document}